\documentclass[aop, preprint, 12pt]{imsart}

\RequirePackage[OT1]{fontenc}
\RequirePackage{amsthm,amsmath}
\RequirePackage[numbers]{natbib}
\RequirePackage[colorlinks,citecolor=blue,urlcolor=blue]{hyperref}
\usepackage{amsfonts}
\usepackage{color}
\usepackage{pifont}
\usepackage{amssymb}
\setlength{\parindent}{0mm}
\usepackage[english]{babel}
\usepackage[T1]{fontenc}
\usepackage{graphicx}
\usepackage{subfigure}
\usepackage{latexsym}
\usepackage{amstext}
\usepackage{mathrsfs}
\usepackage{hyperref}
\usepackage{dsfont}
\usepackage{graphics}
\usepackage{enumerate}
\usepackage{paralist}
\usepackage{color}
\usepackage{fullpage}
\newtheorem{prop}{Proposition}[section]
\newtheorem{assumption}{Assumption}[section]

\newtheorem{rem}[prop]{Remark}
\newtheorem{lem}[prop]{Lemma}

\numberwithin{equation}{section}

\newcommand{\beq}{\begin{eqnarray}}
\newcommand{\beqq}{\begin{eqnarray*}}
\newcommand{\eeq}{\end{eqnarray}}
\newcommand{\eeqq}{\end{eqnarray*}}
\newcommand{\eps}{\varepsilon}
\newcommand{\cmark}{\ding{51}}%
\newcommand{\xmark}{\ding{55}}

\newcommand{\absxi}{ {\xi^{|\cdot|}}}


\usepackage{mathtools}
\usepackage[english]{babel}
\usepackage[utf8]{inputenc}
\usepackage{amsmath}
\usepackage{graphicx}
\usepackage[colorinlistoftodos]{todonotes}
\usepackage{polynom}
\usepackage{amsfonts}
\usepackage{dsfont}
\usepackage{amsthm}
\usepackage{hyperref}
\usepackage{graphicx}
\setlength{\oddsidemargin}{0pt}
\setlength{\textwidth}{470pt}
\setlength{\marginparsep}{0pt} \setlength{\marginparwidth}{60pt}
\setlength{\topmargin}{20pt} \setlength{\headheight}{0pt}
\setlength{\headsep}{0pt} \setlength{\textheight}{640pt}
\setlength{\footskip}{20pt}
\newtheorem{theorem}{Theorem}[section]

\newtheorem{defn}{Definition}[section]
\newtheorem{lemma}{Lemma}[section]

\usepackage{fullpage}
\usepackage[T1]{fontenc}
\usepackage{hyperref}
\usepackage{xcolor}
\definecolor{link-color}{rgb}{0.15,0.4,0.15}
\hypersetup{
  colorlinks,
  linkcolor = {link-color}, citecolor = {link-color},
}

\usepackage{graphicx}
\usepackage{hyperref}
\usepackage{amsmath,amsthm,amssymb}
\usepackage{bbm}
\usepackage{tabularx}



\newcommand{\R}{\mathbb{R}}
\newcommand{\N}{\mathbb{N}}
\renewcommand{\P}{\mathbb{P}}
\newcommand{\E}{\mathbb{E}}
\newcommand{\1}{\mathbbm{1}}




\DeclareMathOperator{\p}{\mbox{\rm I\hspace{-0.02in}P}}

    \def\d{{\textnormal d}}



\newcommand{\iu}{\mathrm{i}} 

\newenvironment{eqnarr}{\begin{IEEEeqnarray}{rCl}}{\end{IEEEeqnarray}\ignorespacesafterend}

\renewcommand{\eqref}[1]{\hyperref[#1]{(\ref*{#1})}}

\newcommand{\rhohat}{\hat{\rho}}









\newcommand{\dd}{\mathrm{d}}







    \def\beq{\begin{eqnarr}}
    \def\eeq{\end{eqnarr}}
    \def\beqq{\begin{eqnarray*}} 
    \def\eeqq{\end{eqnarray*}} 

    \def\p{{\mathbb P}}

        \def\d{{\rm d}}

    \def\d{{\textnormal d}}


\newcommand*{\pref}[1]{\hyperref[#1]{(\ref*{#1})}}
\newcommand*{\refpref}[2]{\hyperref[#2]{\ref*{#1}(\ref*{#2})}}


%


\startlocaldefs
\numberwithin{equation}{section}
\theoremstyle{plain}

\endlocaldefs

\begin{document}

\begin{frontmatter}
\title{Entrance and exit at infinity \\ for stable jump diffusions}

\runtitle{Entrance of stable SDE from the infinite boundary}

\begin{aug}

\author{\fnms Leif D\"oring\thanksref{t2}
\ead[label=e2]{doering@uni-mannheim.de}}
\and
\author{\fnms{Andreas E. Kyprianou}\thanksref{t3}\ead[label=e1]{a.kyprianou@bath.ac.uk}}

\thankstext{t3}{Supported by EPSRC grants EP/L002442/1 and EP/M001784/1}

\thankstext{t2}{Supported by Ambizione Grant of the Swiss Science Foundation}

\affiliation{University of Mannheim, University of Bath}

\address{
University of Bath\\
Department of Mathematical Sciences \\
Bath, BA2 7AY\\
 UK.
\printead{e1}
}

\address{University of Mannheim\\
Institute of Mathematics\\
68131 Mannheim\\
Germany. 
\printead{e2}
}
\end{aug}

\begin{abstract}\hspace{0.1cm}
In his seminal work from the 1950s, William Feller classified all one-dimensional diffusions on $-\infty\leq a<b\leq \infty$ in terms of their ability to access the boundary (Feller's test for explosions) and to enter the interior  from the boundary. Feller's technique is restricted to diffusion processes as the corresponding differential generators allow  explicit computations and the use of Hille-Yosida theory. In the present article we study exit and entrance from infinity for the most natural generalization, that is, jump diffusions of the form
\begin{align*}
	dZ_t=\sigma(Z_{t-})\,dX_t, 
\end{align*}
driven by stable L\'evy processes for $\alpha\in (0,2)$. Many results have been proved for jump diffusions, employing a variety of techniques developed after Feller's work but exit and entrance from infinite boundaries has long remained open. We show that the presence of jumps implies features not seen in the diffusive setting without drift. Finite time explosion is possible for $\alpha\in (0,1)$, whereas entrance from different kinds of infinity is possible for $\alpha\in [1,2)$. Accordingly we derive necessary and sufficient conditions on $\sigma$.

Our proofs are based on very recent developments for path transformations of stable  processes via the Lamperti--Kiu representation and new Wiener--Hopf factorisations for L\'evy processes that lie therein. The arguments draw together original and intricate applications of results using the Riesz--Bogdan--\.Zak transformation, entrance laws for self-similar Markov processes, perpetual integrals of L\'evy processes and fluctuation theory, which have not been used before in the SDE setting, thereby allowing us to employ classical theory such as Hunt--Nagasawa duality and Getoor's characterisation of transience and recurrence.


\end{abstract}

\begin{keyword}[class=MSC]
\kwd[Primary ]{60H20, 60G52}
\end{keyword}

\begin{keyword}
\kwd{SDEs, entrance, explosion, Kelvin transform, duality, stable L\'evy processes}
\end{keyword}

\end{frontmatter}
\tableofcontents

\section{Introduction}
	 In his seminal work in the 1950s, Feller \cite{feller1, feller2}  classified one-dimensional diffusion processes and their boundary behaviour on an interval $[a,b]$ with  $-\infty\leq a<b\leq \infty$. Feller identified four types of boundaries of the domain. The definition of each is given in terms of combinations of two fundamental properties (or the absence thereof), namely accessibility, i.e. reachable in finite time from within $(a,b)$, and enterability, i.e. the diffusion started at that point can enter $(a, b)$. The four types of boundary points are: regular,
if it is both accessible and enterable; exit, if it is accessible but not enterable; entrance, if it is enterable but not accessible; natural if it is neither accessible nor enterable. Feller's definitions and proofs are purely analytic, using Hille-Yosida theory to characterize all possible subdomains of $C([a,b])$, the space of continuous functions on $[a,b]$, for second order differential operators $\mathcal A :=\kappa \frac{d}{dx}+\frac{\sigma^2}{2} \frac{d^2}{dx^2}$ to generate a Feller semigroup. Feller's study can be recovered probabilistically using stochastic differential equations (SDEs) and excursion theory to construct so-called sticky boundary behavior; a historical summary can be found in \cite{peskir}. In the present article we will not discuss sticky behavior so we focus on SDEs of the form
	\begin{align}\label{BB}
		dZ_t=\kappa(Z_t)\,{dt}+\sigma(Z_t)\,dB_t, \quad Z_0=z\in \R,
	\end{align}
where  $(B_t,t\geq 0)$ is a standard Brownian motion. A simple change of space allows to simplify the degree of generality in the choices of $\kappa$. Indeed, transforming space with the so-called scale function allows a reduction of \eqref{BB} to the driftless SDE
	\begin{align}\label{B}
		dZ_t=\tilde\sigma(Z_t)\,dB_t,\quad Z_0=z\in\R,
	\end{align}
	on a new interval $(\tilde a, \tilde b)$. In the setting of the entire real line, i.e. $a=-\infty$ and $b=+\infty$, the notion of entrance (in applications also called coming down from infinity) and exit (explosion) for \eqref{B} becomes interesting as they necessitate the range of the diffusion to be infinite over an almost surely finite period of time, a property not seen for the Brownian motion alone. It is a standard property (random time-change of a recurrent process) that solutions to \eqref{B} cannot explode in finite time, hence, neither $+\infty$ nor $-\infty$ are accessible. This can also be verified by plugging-into Feller's test for explosions, see for instance Karatzas and Shreve \cite{KaratzasShreve}, Section 5.5.C. On the other hand, depending on the growth of $\sigma$ at infinity the infinite boundary points can be of entrance type. Feller's results for this scenario imply that $+\infty$ is an entrance boundary if and only if  
	\begin{align}\label{Test}
		\int^{+\infty} x\,\sigma(x)^{-2}\,{dx}<\infty,
	\end{align}
	i.e. $\sigma$ growth slightly more than linearly at infinity. An analogous integral test at $-\infty$ holds in the case that $-\infty$ is an entrance point.\smallskip

	In the present article we study a new type of boundary behaviour, namely simultaneously exit and entrance from $+\infty$ and $-\infty${\color{black}; See Figure \ref{fig1}}. The simultaneous infinite boundary point will be denoted by $\pm\infty$. We define entrance (resp. explosion at a finite random time $T$) from $\pm\infty$ if almost surely {liminf}$_{t\downarrow 0} Z_t=-\infty$ and {limsup}$_{t\downarrow 0} Z_t=+\infty$ (resp. {liminf}$_{t\uparrow T} Z_t=-\infty$ and {limsup}$_{t\uparrow T} Z_t=+\infty$). Entrance and exit at $\pm\infty$ are forced by an alternation of increasingly big jumps that avoid compact sets in $\R$. 
\begin{figure}[h]
        \includegraphics[scale=0.5]{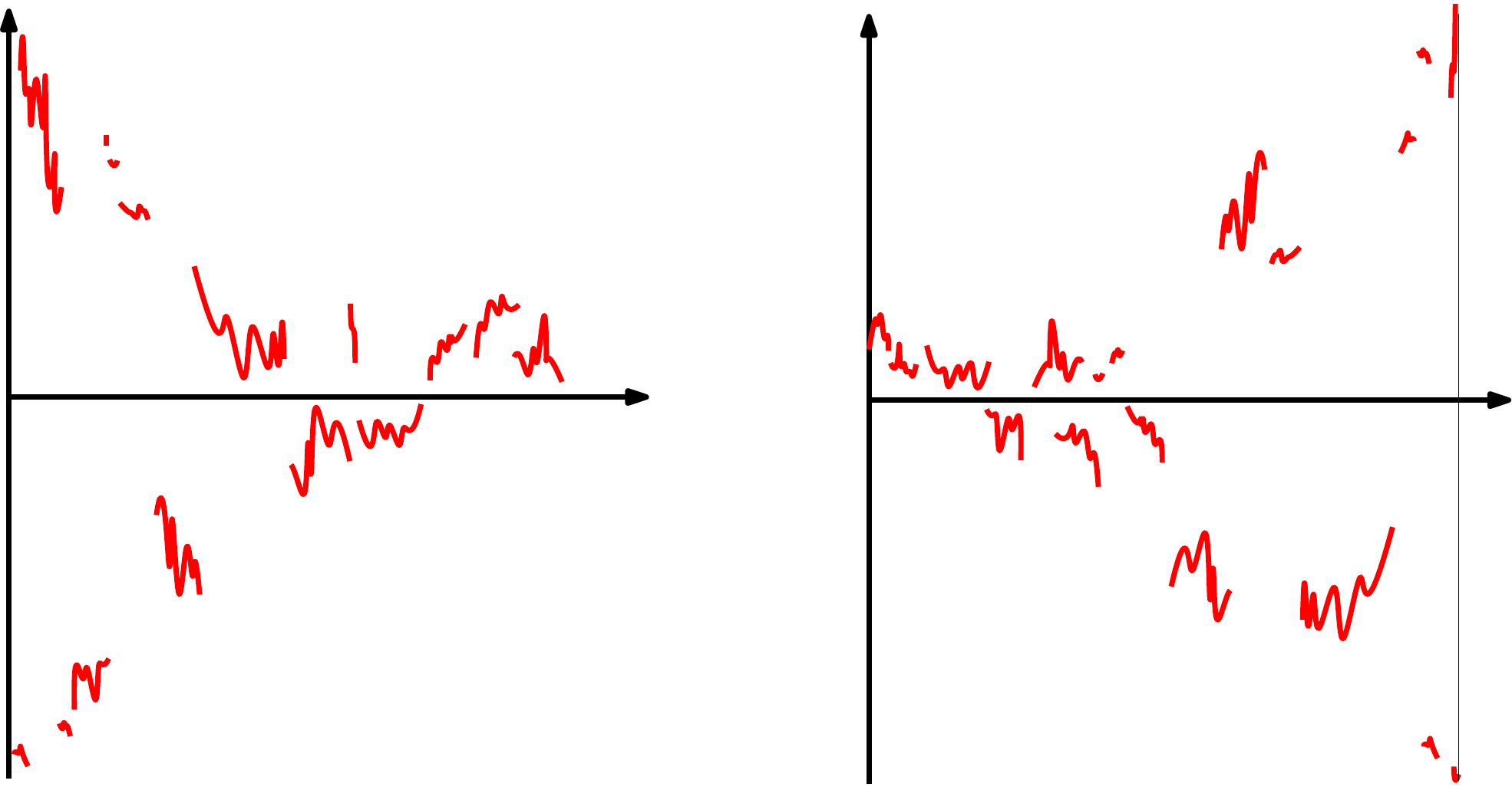}
\caption{Entrance from $\pm \infty$ and exit at $\pm\infty$}
\label{fig1}
\end{figure}

	We focus our study on so-called stable jump diffusions, i.e. stochastic differential equations
	      	\begin{align}\label{2}
		dZ_t=\sigma(Z_{t-})\,dX_t,\quad Z_0=z\in\R,
	\end{align}
driven by a  stable L\'evy process $  (X_t,t\geq 0)$ with index $\alpha\in (0,2)$ up to a (possibly infinite) explosion time. The boundary case $\alpha=2$ corresponds to the Brownian case studied by Feller. More precisely, we derive necessary and sufficient conditions on $\sigma$ so that (i) non-exploding solutions exist and (ii) the corresponding transition semigroup of $Z$ extends to an entrance point at `infinity' in an appropriate Fellerian way.

\section{Main results}
Before stating the results let us clarify our notation. A stable process is a L\'evy process with the additional property that, for all $c>0$ and $x\in\mathbb{R}$, 
\[
(cX_{c^{-\alpha}t}, t\geq 0) \text{ under } \mathbb{P}_x \text{ is equal in law to } (X_t, t\geq 0) \text{ under }\mathbb{P}_{cx},
\]
where $(\mathbb{P}_x, x\in\mathbb{R})$ are the probabilities of $X$ and $\alpha\in(0,2)$. As a L\'evy process, a stable process is a Feller process and the semigroup of $X$ is entirely characterised by its characteristic exponent. More precisely,  $\Psi(z): = -\log
\mathbb{E}\left[ \mathrm{e}^{\mathrm{i}{z}   X_{1}}\right]$ satisfies
 \begin{equation}
 \Psi(z) = 
 |z|^{\alpha} \left( e^{\pi {\mathrm i} \alpha (\frac{1}{2}-\rho)} {\bf 1}_{\{z>0\}}+ 
  e^{-\pi {\mathrm i} \alpha (\frac{1}{2}-\rho)} {\bf 1}_{\{z<0\}}\right), \quad z\in\mathbb{R},
 \label{Psi_alpha_rho_parameterization_process}
 \end{equation}
where we have reserved the special notation  $\mathbb{P}$, with expectation operator $\mathbb{E}$, for the law of $X$ when issued from the origin and 
$\rho={\mathbb P}(X_1>0)$
is the positivity parameter. 
The L\'evy measure associated with $\Psi$ can be written in the form 
\begin{equation}
\Pi({dx})/dx = \Gamma(1+\alpha)  \frac{\sin(\pi \alpha \rho)}{\pi} \frac{1}{x^{1+\alpha}}\mathbf{1}_{(x>0)}+
 \Gamma(1+\alpha)  \frac{\sin(\pi \alpha\hat\rho)}{\pi}\frac{1}{|x|^{1+\alpha}}\mathbf{1}_{(x<0)}, \quad x\in\mathbb{R},
\label{jumpmeas}
\end{equation}
where $\hat\rho := 1-\rho$. In the case that $\alpha = 1$, we take $\rho = 1/2$, meaning that $X$ corresponds to the Cauchy process. If $X$ has only upwards (resp. downwards) jumps we say $X$ is spectrally positive (resp. negative). If $X$ has jumps in both directions we say $X$ is two-sided. A spectrally positive (resp. negative) stable process with $\alpha\in(0,1)$ is necessarily increasing (resp. decreasing). See for example the recent review Kyprianou \cite{KALEA}  for more on this parametric  classification of stable processes. %
\smallskip

For a driving stable process $X$ on a filtered probability space $(\Omega, \mathcal A, \mathcal F_t,\P)$ and an initial value $z\in\R$, a stochastic process $Z$ on $(\Omega, \mathcal A,\P)$ is called a solution to \eqref{2} up to an explosion time if $Z$ is $\mathcal F_t$-adapted, has almost surely c\`adl\`ag  sample paths and, with $T^n=\inf\{t: |Z_s|\geq n\}$, the stopped integral equation
\begin{align}\label{2b}
	Z_{t\wedge T^n}=z+\int_0^{t \wedge T^n} \sigma(Z_{s-})\,dX_s,\quad t\geq 0,
\end{align}
is satisfied almost surely for all $n\in \N$. We denote by $T=\lim_{n\to \infty} T^n$ the (finite or infinite) explosion time of the solution. We note that with this notion, a solution $Z$ is  a 
`local solution on $(-n,n)$' (in the sense of Zanzotto \cite{z2} or \cite{z}) for all $n\in\N$.\smallskip

When $\alpha\in(1,2)$, the study of weak existence and uniqueness of solutions to \eqref{2} (resp. \eqref{2b}) in $\R$ is due to Zanzotto \cite{z}, complementing the classical Engelbert--Schmidt theory for one-dimensional Brownian SDEs (see Chapter 5.5 of \cite{KaratzasShreve}). In fact, the main difficulty is to understand existence and uniqueness for solutions at zeros of $\sigma$. The focus of the present article lies on finite time explosion and entrance from infinity, so we always work under the following simplifying assumption that avoids all difficulties in the interior of $\R$.
\begin{assumption}\label{A}
	$\sigma$ is continuous and strictly positive.
\end{assumption}
Time-change techniques are a useful tool in the study of one-dimensional diffusions, see for instance Karatzas and Shreve \cite{KaratzasShreve}, Section 5.5.A. For stable SDEs time-change was the main tool in the study of Zanzotto \cite{z2}, \cite{z}. Under weaker assumptions than our Assumption \ref{A}, Zanzotto proved that for all $n\in \N$ there is a unique local solution on $(-n,n)$ so that  $(Z_t, t\leq T^n)=(X_{\tau_t},t\leq T^n)$ in distribution, where $\tau_t = \inf\{s>0 : \int_0^s \sigma(X_s)^{-\alpha}ds > t\}$. Note that continuity of $\sigma>0$ on $\R$ implies that $\sigma$ is bounded away from zero on all intervals $(-n,n)$. Local solutions have a simple consistency property. For $m>n$, a local solution on $(-m,m)$ stopped at $T^n$ is a local solution on $(-n,n)$. Hence, we immediately obtain the following time-change representation of (possibly exploding) solutions.
\begin{prop}\label{pr}
	Suppose that $\sigma$ satisfies Assumption \ref{A} and $z\in\R$. Then there is a unique (possibly exploding) weak solution $Z$ to the SDE \eqref{2} and $Z$ can be expressed as time-change under $\P_z$ via
	\begin{align}\label{timechangesolution}
		Z_t:=X_{\tau_t},\quad t<T, 
	\end{align}
	where 
	\begin{align}\label{7}
			\tau_t = \inf\left\{s>0 : \int_0^s \sigma(X_s)^{-\alpha}ds > t\right\}
	\end{align}
	and the finite or infinite explosion time is $T= \int_0^\infty \sigma(X_s)^{-\alpha}ds$. 
\end{prop}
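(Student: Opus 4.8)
The plan is to build the global (possibly exploding) solution by realising Zanzotto's family of local solutions through a single perpetual time-change of the driving process $X$ under $\P_z$, and then to read off the explosion time from the associated clock.

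First I would introduce the additive functional
$$A_s := \int_0^s \sigma(X_u)^{-\alpha}\, du, \qquad s \ge 0.$$
Under Assumption \ref{A} the map $x \mapsto \sigma(x)^{-\alpha}$ is continuous and strictly positive, so $u \mapsto \sigma(X_u)^{-\alpha}$ is c\`adl\`ag and locally bounded along the paths of $X$; hence $A$ is continuous and strictly increasing, with $A_0 = 0$ and terminal value $A_\infty = \int_0^\infty \sigma(X_u)^{-\alpha}\, du \le \infty$. Its right-continuous inverse is exactly $\tau_t = \inf\{s > 0 : A_s > t\}$ from \eqref{7}, a continuous strictly increasing bijection from $[0, A_\infty)$ onto $[0, \infty)$ with $A_{\tau_t} = t$ and $\tau_{A_s} = s$. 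Setting $Z_t := X_{\tau_t}$ for $t < A_\infty$ produces a c\`adl\`ag process, adapted to the time-changed filtration $(\mathcal{F}_{\tau_t})_{t \ge 0}$, which satisfies $Z_0 = X_0 = z$ under $\P_z$.

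Second I would identify the exit times. Let $S^n := \inf\{s > 0 : |X_s| \ge n\}$ be the first exit of $X$ from $(-n,n)$. Since $\tau$ is a continuous increasing bijection, $|Z_t| \ge n$ first occurs at $t = A_{S^n}$, so $T^n = A_{S^n} = \int_0^{S^n}\sigma(X_u)^{-\alpha}\, du$. Because $X$ has c\`adl\`ag paths and therefore does not explode, $\sup_{s \le t}|X_s| < \infty$ for each $t$, whence $S^n \uparrow \infty$ almost surely and $T^n = A_{S^n} \uparrow A_\infty$. This yields the claimed identity $T = \lim_n T^n = A_\infty$. For existence I would then appeal to Zanzotto's local solution theorem: for fixed $n$ the values $\sigma(X_u)$ entering $A$ on $[0, S^n)$ depend only on $\sigma|_{(-n,n)}$, so the clock $\tau$ on $[0, T^n)$ coincides with the time-change built from any modification of $\sigma$ outside $(-n,n)$. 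Consequently $(Z_t, t \le T^n) = (X_{\tau_t}, t \le T^n)$ is precisely Zanzotto's unique local solution on $(-n,n)$, and hence satisfies the stopped equation \eqref{2b}. As this holds for every $n \in \N$ with $T^n \uparrow T$, the process $Z$ is a (possibly exploding) weak solution of \eqref{2}.

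Finally, for uniqueness in law I would run the argument in reverse. If $Z'$ is any (possibly exploding) solution with exit times $T'^n$, then, as observed immediately before the statement, $Z'$ stopped at $T'^n$ is a local solution on $(-n,n)$; Zanzotto's weak uniqueness of local solutions forces it to agree in law with $Z$ stopped at $T^n$. Letting $n \to \infty$ and using the consistency of these stopped laws pins down the law of the whole process up to explosion. The only genuinely delicate points are the identification $T = A_\infty$ and the compatibility of the single global clock with the family of local time-changes; both are settled by the facts that $\sigma$ is bounded away from zero on compacts and that $X$, being a L\'evy process, reaches each level $n$ only after a strictly positive, a.s.\ finite time $S^n$ with $S^n \uparrow \infty$.
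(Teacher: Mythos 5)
Your proposal is correct and follows essentially the same route as the paper, which derives the proposition directly from Zanzotto's existence and weak uniqueness of local solutions on $(-n,n)$ realised as time-changes of $X$, together with the consistency of stopped local solutions; you merely make explicit the details (continuity and strict monotonicity of the clock $A$, the identification $T^n=A_{S^n}$ and $T=A_\infty$) that the paper leaves as "immediate".
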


Henceforth, the law of the unique solution $Z$ as a process on $\mathbb{D}([0,\infty), \mathbb{R})$  will be denoted by $\emph{\rm P}_z$, $z\in\mathbb{R}$, where $\mathbb{D}([0,\infty), \mathbb{R})$ is the space of c\`adl\`ag paths mapping $[0,\infty)$ to $\mathbb{R}$, equipped with the Borel $\sigma$-algebra induced by the Skorokhod topology. \textcolor{black}{We call a finite-time explosion a Feller explosion if the explosion time T is weakly continuous in the Skorokhod topology with respect to the initial condition  on $\mathbb{R}$ and T converges weakly to zero as $|x|\to\infty$.
}

\smallskip

In the following two sections we present and discuss tests for \textcolor{black}{Feller} explosion and \textcolor{black}{Feller} entrance from infinity. All proofs are based solely on the time-change representation \eqref{timechangesolution} for solutions of the jump diffusion \eqref{2}, no further SDE calculus is used. \textcolor{black}{The main focus of our constructions lies on entrance from infinity.}

\subsection{(Non-)Explosion of stable jump diffusions}\label{S1}
 In theory, the question of finite time explosion could be resolved immediately from \eqref{timechangesolution} and \eqref{7} by studying finiteness vs. infiniteness of the so-called perpetual integral  $\int_0^\infty \sigma(X_s)^{-\alpha}ds$ for the stable process $X$. This is trivial for $\alpha\geq 1$ and the Brownian case due to (set-)recurrence of $X$. For $\alpha\in(0,1)$ the transience of $X$ implies that finiteness of $\int_0^\infty \sigma(X_s)^{-\alpha}ds$ will depend on the growth of $\sigma$ at infinity. Except for a general 0-1 law for perpetual integrals of L\'evy processes, which implies that finite time explosion is an event of probability $0$ or $1$ (see Lemma 5 of \cite{DK}, the stronger assumptions of \cite{DK} are not used for the 0-1 law), we are not aware of a sufficiently general result for perpetual integrals that is helpful in this respect. 
 Our first main theorem gives necessary and sufficient conditions for \textcolor{black}{ Feller explosion } of stable SDEs \eqref{2} and identifies the infinite almost sure limit at the explosion time which is either $+\infty$, $-\infty$ or $\pm\infty$. Divergence of the solution $Z$ to $\pm\infty$ at the explosion time means $\limsup_{t\uparrow T} Z_t=+\infty$ and $\liminf_{t\uparrow T} Z_t=-\infty$ almost surely.\smallskip

In terms of other work that are in close proximity to our own, we are only aware of the recent article Li \cite{Pei} for continuous state polynomial branching processes which coincides with our Theorem \ref{zthrm} below for polynomials $\sigma(x)=x^\theta$ and spectrally positive driving stable process. However, the use of technology for branching processes excludes generalizations of that article to two-sided jumps.

\smallskip

 In the {\color{black}Table 1} a tick stands for \textcolor{black}{Feller} explosion to the corresponding infinite boundary point, a cross for almost sure non-explosion. We use the symbols $\uparrow$, $\downarrow$ and $\uparrow \& \downarrow$ to indicate the direction of jumps of the driving stable process. The table is complemented with a final row ($\alpha=2$) representing Feller's test for explosions for Brownian SDEs.
	\begin{theorem}\label{zthrm} 
Suppose that $\sigma$ satisfies Assumption \ref{A} and let
\begin{align*}
	I^{\sigma,\alpha}(A) = \int_A \sigma(x)^{-\alpha}|x|^{\alpha -1}d x.
\end{align*}
Then {\color{black}Table 1} exhaustively summarises \textcolor{black}{Feller} explosion for the SDE \eqref{2} issued from any $z\in \R$, depending only on $\alpha, \sigma$ and the directions of jumps of the stable driving L\'evy process.
\begin{footnotesize}
\begin{table}[h!]
\label{table1}
\caption{\rm Necessary and sufficient conditions for exit at infinite boundary points}
 \hspace{-0.4cm}
\begin{tabular}{|c|c | l| l| l|}
\hline
$\alpha$ &\scriptsize{Jumps}&$+\infty$&$-\infty$&$\pm\infty$\\
\hline\hline
 &only $\downarrow$& \xmark &
\cmark \scriptsize{ iff } $I^{\sigma,\alpha}(\R_-)<\infty$
&\xmark \\
$<1$&only $\uparrow$& \cmark \scriptsize{ iff } $I^{\sigma,\alpha}(\R_+)<\infty $
&\xmark&\xmark \\
&$\uparrow \& \downarrow$&\xmark 
&\xmark &\cmark \scriptsize{ iff } $I^{\sigma,\alpha}(\R)<\infty$\\
\hline\hline
$\scriptsize{=1}$&$\uparrow  \& \downarrow$&\xmark &\xmark &\xmark \\
\hline\hline
 &only $\downarrow$&\xmark & \xmark&\xmark \\
$>1$&only $\uparrow$&\xmark&\xmark  &\xmark \\
& $\uparrow \& \downarrow$&\xmark&\xmark& \xmark \\
\hline \hline
$=2$&\text{none}&\xmark&\xmark& \xmark \\
\hline
\end{tabular}

\end{table}

\end{footnotesize}  



 \end{theorem}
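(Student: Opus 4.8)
The plan is to reduce everything to the asymptotics of the perpetual integral $T=\int_0^\infty \sigma(X_s)^{-\alpha}\,ds$ furnished by Proposition \ref{pr}: the solution explodes exactly at time $T$, and since $\tau_t\uparrow\infty$ as $t\uparrow T$, the value of $Z$ at the explosion time is governed entirely by the long-time behaviour of the driving process $X$. The case $\alpha\in[1,2)$ is then immediate from Getoor's characterisation of transience and recurrence: a stable process with $\alpha\geq 1$ is (set-)recurrent, so the occupation time $\int_0^\infty \1_{[-1,1]}(X_s)\,ds=\infty$ almost surely, and since $\sigma$ is continuous and strictly positive, $\sigma^{-\alpha}$ is bounded below by a positive constant on $[-1,1]$. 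Hence $T=\infty$ a.s.\ and no explosion occurs, which fills the entire $\alpha=1$ and $\alpha\in(1,2)$ block with crosses.

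The substantive case is $\alpha\in(0,1)$, where $X$ is transient, and I would treat the three jump regimes through the self-similarity of $X$. When $X$ is spectrally positive it is increasing, and the Lamperti transform writes $X_s=\exp(\xi_{\varphi(s)})$ for a Lévy process $\xi$ drifting to $+\infty$; changing variables in the perpetual integral turns $T$ into a perpetual integral $\int_0^\infty g(\xi_r)\,dr$ with $g(y)=\sigma(e^y)^{-\alpha}e^{\alpha y}$, and the substitution $x=e^y$ gives $\int^\infty g(y)\,dy=\int^\infty \sigma(x)^{-\alpha}x^{\alpha-1}\,dx=I^{\sigma,\alpha}(\R_+)$. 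The integral test for perpetual integrals of Lévy processes drifting to $+\infty$ (the a.s.\ criterion underlying the $0$-$1$ law of \cite{DK}) then reduces finiteness of $T$ to finiteness of $I^{\sigma,\alpha}(\R_+)$; since $X_s\to+\infty$, the explosion is to $+\infty$. The spectrally negative (decreasing) regime is the mirror image applied to $-X$, yielding the test $I^{\sigma,\alpha}(\R_-)<\infty$ and explosion to $-\infty$.

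For two-sided $X$ with $\alpha\in(0,1)$ the process is transient but oscillates, with $\limsup_s X_s=+\infty$ and $\liminf_s X_s=-\infty$, so whenever $T<\infty$ the solution necessarily diverges to $\pm\infty$. Here I would invoke the Lamperti–Kiu representation of $X$ as a real self-similar Markov process driven by a Markov additive process $(\xi,J)$ recording the modulus and the sign, transforming $T$ into a perpetual integral of the MAP whose convergence is controlled by the tails of $g$ on both sides; this produces the two-sided test $I^{\sigma,\alpha}(\R)<\infty$. In all three regimes the necessity direction (integral infinite $\Rightarrow T=\infty$ a.s.) is obtained by combining the sharp integral test with the $0$-$1$ law of \cite{DK}, which upgrades positivity of $\P(T=\infty)$ to $T=\infty$ almost surely, thereby avoiding any appeal to $\E[T]$, whose finiteness is strictly stronger than a.s.\ finiteness.

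Finally, to promote a.s.\ finite explosion to \emph{Feller} explosion I would exploit the scaling of the stable process: writing $T$ under $\P_x$ and using the self-similar entrance law, one shows that $T$ depends weakly continuously on the starting point and that, as $|x|\to\infty$, the solution begins ever deeper in the region where $\sigma^{-\alpha}$ is small, so that $T$ converges weakly to $0$. The main obstacle I anticipate is the two-sided regime: both the MAP version of the perpetual-integral test, where the modulating sign process forces one to track the two tails simultaneously, and the verification of weak continuity and degeneration of $T$ at infinity, are considerably more delicate than in the monotone one-sided cases, and it is precisely this regime that lies beyond the reach of the branching-process techniques of \cite{Pei}.
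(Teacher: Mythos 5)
Your treatment of $\alpha\geq 1$ (recurrence forces $T=\int_0^\infty\sigma(X_s)^{-\alpha}ds=\infty$) and your identification of the boundary point reached at explosion both match the paper. The sufficiency direction for $\alpha\in(0,1)$ is also salvageable, though the paper does it more directly by computing $\mathbb{E}_x[T]=\int_{\R}\sigma(y)^{-\alpha}h(x-y)\,dy$ via the free potential density, which is finite precisely when $I^{\sigma,\alpha}(\R)<\infty$.

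The genuine gap is in your necessity argument for $\alpha\in(0,1)$. Theorem 1 of \cite{DK}, which you invoke to get the sharp ``iff'', requires the underlying L\'evy process to have local times, i.e.\ to hit points. For every process derived from a stable process of index $\alpha<1$ this fails: points are polar for $X$, so they are polar for the censored process and for the pssMp/rssMp obtained by Lamperti or Lamperti--Kiu, and hence the underlying L\'evy process (in the spectrally one-sided case a driftless subordinator) does not hit points. This is exactly why the paper can apply \cite{DK} in the entrance proofs for $\alpha\in(1,2)$ (where it explicitly verifies ``the L\'evy process hits points because $X$ does'') but states in Section \ref{sec6} that for explosion it ``cannot provide a direct characterization'' via the perpetual-integral route. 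In addition, for two-sided jumps you would need a MAP version of the perpetual-integral test, which you acknowledge as delicate but neither cite nor prove. The paper's workaround is structurally different: it spatially inverts and time-reverses the exploding solution (Proposition \ref{nagAx}), so that explosion of $Z$ becomes entrance at the origin of the dual time-changed process $V=X_{\iota}$, and then applies the Getoor-type transience criterion (Proposition \ref{P1}) to conclude that the potential measure of the killed process is finite on compacts, which unpacks to $\int_{[-\varepsilon,\varepsilon]}\beta(x)h(x)\,dx<\infty$ and hence to $I^{\sigma,\alpha}(\R)<\infty$. Your final step promoting a.s.\ explosion to Feller explosion is also unsubstantiated: the integral test does not force $\sigma(x)^{-\alpha}|x|^{\alpha-1}$ to be pointwise small at infinity, and the paper instead obtains weak continuity of $T$ and $T\Rightarrow 0$ as $|x|\to\infty$ from the time-reversal identity $\P_y(T>t)=\mathbb{E}_0[\mathbf{1}_{(\iota_t<\infty)}|yX_{\iota_t}-1|^{\alpha-1}]$.
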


\subsection{Entrance from infinity}\label{S2} 
After the characterization of infinite exit points we continue with the characterization of entrance from infinity. By analogy to the three types of infinite boundary points for explosion, we distinguish entrance points $+\infty$, $-\infty$ and $\pm\infty$. Alternating entrance from $\pm\infty$ is a new phenomenon. A rigorous formulation will be given in terms of semigroup extensions under which trajectories enter continuously from infinity. Although  in the spirit of Feller's work  our construction is completely different. Feller constructed semigroups through the Hille-Yosida theorem (which gives a Markov process through the Riesz representation theorem) whereas we give explicit probabilistic constructions and then prove the corresponding semigroup is Feller. It is not clear if the Hille-Yosida approach for diffusions can be extended to jump diffusions as it involves the need to understand the resolvent equations $(\mathcal A-\lambda I)f=g$. Those are ordinary differential equations for the diffusive case and can be solved using the variation of constants formula. For jump diffusions the resolvent equations are intego-differential equations for which explicit solutions are not available.\smallskip
	

 Suppose that $S$ is a locally compact metrizable topological space. 
 We write $C_b(S)$ for the space of bounded continuous functions mapping $S$ to $\R$. Then $C_b(S)$ is a Banach space with the supremum norm $||\cdot||$. 
 \begin{defn}\rm \label{Fellersemigroupdef}
 A $C_b$-Feller semigroup is a collection of linear operators $\mathcal{P}=(\mathcal{P}_t, t\geq 0)$ mapping $C_b(S)$ into $C_b(S)$ satisfying
\begin{itemize}
	\item[(i)] $\mathcal{P}_t 1\leq 1$ for all $t\geq 0$ (contraction),
	\item[(ii)] $\mathcal{P}_t f\geq 0$ for all $f\geq 0$ and $t\geq 0$ (positivity),
	\item[(iii)] $\mathcal{P}_0={\rm id}$ and $\mathcal{P}_{t+s}=\mathcal{P}_t \mathcal{P}_s$ for all $t,s\geq 0$ (semigroup),
	\item[(iv)] $\lim_{t\to 0}\mathcal{P}_t f(x)=f({x})$ for all $f\in C_b(S)$ and ${x}\in S$ (weak continuity).
\end{itemize}
Additionally, $\mathcal P$ is called conservative (or not killed) if
\begin{itemize}
	\item[(i')] $\mathcal{P}_t 1=1$ for all $t\geq 0$.
\end{itemize}
\end{defn}
Semigroups are the natural language with which to describe the transitions of a Markov process. A (possibly killed) Markov process $(Y_t,t\geq 0)$ on $S$ with cemetary state $\Delta\notin S$ is a collection $(\texttt{P}_{y},{y}\in S)$ of probability laws on the c\`adl\`ag trajectories $\mathbb{D}([0,\infty), S\cup \{\Delta\})$, mapping $[0,\infty)$ to $S\cup \{\Delta\}$, equipped with the Borel $\sigma$-algebra induced by the Skorokhod topology, such that the canonical process $Y_t(\omega):=\omega_t$, $t\geq 0$, is absorbed at $\Delta$ and satisfies
\[
	\texttt{E}_{y}[f(Y_t)\,|\sigma(Y_u,u\leq s)]=\texttt{E}_{y}[f(Y_t)\,|\sigma(Y_s)],\qquad \texttt{P}_{y}\text{-almost surely},
\]
 for all ${y}\in S$, $0\leq s\leq t$ and $f\in C_b(S)$. If $\mathcal P$ is conservative, then the killing time is infinite almost surely. If we define from a Markov process $(\texttt{P}_{y},{y}\in S)$ the so-called transition operators
\begin{equation}
	\mathcal{P}_{t} f({y}): = \texttt{E}_{y}[f(Y_t)], \qquad t\geq 0, {y}\in S, f\in C_b(S),
\label{semigroupMarkov}
\end{equation}
then conditions (i)-(iii) hold. However, it is not necessarily the case that $\mathcal P_tf$ is continuous and (iv) holds. Conversely, for a given Feller semigroup $\mathcal{P}$ there is a (possibly killed) strong Markov process $(\mathcal P_x:{x\in S})$ on $S$ with transition semigroup $\mathcal{P}$ in the sense of \eqref{semigroupMarkov}. In that case we refer to $Y$ as a (conservative) Feller process. We refer the reader for instance to Chapter 17 of Kallenberg \cite{Kallenberg} for a full account of the theory.\smallskip

The main finding of this article is that there are three types of infinite entrance boundaries under the presence of jumps. In this respect, let us denote
\begin{align}
	\overline{\R} := \mathbb{R}\cup\{\infty\}, \quad \underline{\R} : = \mathbb{R}\cup\{-\infty\}\quad \text{and }\quad\underline{\overline{\R}} : = \mathbb{R}\cup\{\pm\infty\}
	\label{3R}
\end{align}
with the usual extensions of the Eucledian topology, i.e. the smallest topology containing all open sets of $\mathbb{R}$ and sets 
\begin{align}\label{4R}
	(c,+\infty] \text{ for } \overline \R,\quad [-\infty,c)\text{ for }\underline \R \quad \text{ and }\quad [-\infty, c)\cup (d,+\infty]\text{ for }\underline{\overline{\R}}.
\end{align}	
Note that all these sets are metrizable as they are homeomorphic to intervals. It will later play a role that in this way $\underline{\overline{\R}}$ is the one-point compactification of $\R$. 
\begin{defn}\rm\label{Fellerdef}
We say that $+\infty$ is a (continuous) entrance point for a Feller process $({\texttt P}_x:{x\in \R})$ if there is an extension $({\texttt P}_x:{x\in \overline \R})$ on the Skorokhod space, specifically, meaning Skorokhod  continuity in the initial position, so that
\begin{itemize}
\item[(i)] the point $+\infty$ is not accessible under ${\texttt P}_x$ for all $x\in\R$,
\item[(ii)] the corresponding transition semigroup $\mathcal P$ is Feller on $C_b(\overline{\R})$,
\item[(iii)] there is continuous entrance in the sense that ${\texttt P}_{+\infty}(\lim_{t\downarrow 0} Y_t=+\infty)=1$.
\end{itemize}
Analogously, we define entrance from $-\infty$ as extension to $C_b(\underline{\R})$ and entrance from $\pm\infty$ as extension to $C_b(\underline{\overline{\R}})=C(\underline{\overline{\R}})$.
\end{defn}

Our next result extends Feller's characterization of infinite (continuous) entrance points to stable jump diffusions. In {\color{black} Table 2} below a tick stands for entrance from the corresponding infinite boundary point, a cross for no entrance point. We use the symbols $\uparrow$, $\downarrow$ and $\uparrow \& \downarrow$ to indicate the direction of jumps of the driving stable process. The table is complemented with a final row representing Feller's criterion for entrance from infinity in the Brownian case. We also note that, when the driving noise only has positive jumps, the necessary and sufficient condition in the table is a special form of the one given by (1.25) in \cite{Pei}, where some other equivalent conditions are also given. 
\begin{theorem}\label{main}
Suppose that $\sigma$ satisfies Assumption \ref{A} and let
\begin{align*}
	I^{\sigma,\alpha}(A) = \int_A \sigma(x)^{-\alpha}|x|^{\alpha -1}d x \quad\text{ and } \quad I^{\sigma,1}= \int_\R \sigma(x)^{-1}\log |x|d x.
\end{align*}
Then {\color{black} Table 2}  exhaustively summarizes entrance points at infinity depending only on $\alpha, \sigma$ and the directions of jumps of the stable driving L\'evy process.
\begin{footnotesize}
\begin{table}[h!]
\label{table2}
\caption{\rm Necessary and sufficient conditions for entrance from infinite boundary points}
 \hspace{-0.4cm}
\begin{tabular}{|c| c ||l| c || l|c || l|c |}
\hline
$\alpha$ &\scriptsize{Jumps}&$+\infty$&\scriptsize{Proof}& $-\infty$&\scriptsize{Proof}&$\pm\infty$&\scriptsize{Proof}\\
\hline\hline
 &only $\downarrow$& \xmark &\tiny{(\S \ref{proof6})}&
\xmark 
& \tiny{(\S \ref{proof8})}
&\xmark &\tiny{(\S \ref{proof6})}\\
$<1$&only $\uparrow$& \xmark 
& \tiny{(\S \ref{proof4.5})}
&\xmark & \tiny{(\S \ref{proof4})}&\xmark &\tiny{(\S \ref{proof4})}\\
&$\uparrow \& \downarrow$&\xmark& \tiny{(\S \ref{proof0})}&\xmark &\tiny{(\S \ref{proof1})}&\xmark &\tiny{(\S \ref{proof2})}\\
\hline\hline
$\scriptsize{=1}$&$\uparrow  \& \downarrow$&\xmark &\tiny{(\S \ref{proof1})}&\xmark &\tiny{(\S \ref{proof1})}&\cmark\,\scriptsize{ iff $ I^{\sigma,1}<\infty$} &\tiny{(\S \ref{a=1})}
\\
\hline\hline
 &only $\downarrow$&\xmark &\tiny{(\S \ref{proof6})}& \cmark\,\scriptsize{ iff  $I^{\sigma,\alpha}(\R_-)<\infty$ }&\tiny{(\S \ref{proof5})}
&\xmark &\tiny{(\S \ref{proof6})}\\
$>1$&only $\uparrow$&\cmark\, \scriptsize{iff $I^{\sigma,\alpha}(\R_+)<\infty$}& \tiny{(\S \ref{proof5})}
&\xmark  &\tiny{(\S \ref{proof4})}&\xmark & \tiny{(\S \ref{proof4})}\\
& $\uparrow \& \downarrow$&\xmark& \tiny{(\S \ref{proof0})}&\xmark& \tiny{(\S \ref{proof1})}& \cmark\,\scriptsize{ iff 
$I^{\sigma,\alpha}(\R)<\infty$} &\tiny{(\S \ref{proof3})}
\\
\hline\hline
\scriptsize{$=2$}\text{  }&\text{ none }& \cmark\, \scriptsize{iff $I^{\sigma,2}(\R_+)<\infty$} &\text{  Feller }&\cmark\, \scriptsize{iff $I^{\sigma,2}(\R_-)<\infty$}& \text{  Feller }
&\xmark \quad\quad\quad\quad\quad\quad\quad\text{ }&\text{  ------ }\\
\hline
\end{tabular}
\end{table}
\end{footnotesize}  
\end{theorem}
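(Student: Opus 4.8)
The plan is to route everything through the time-change representation of Proposition \ref{pr} and then to trade the entrance-from-infinity problem for an entrance-from-zero problem for a self-similar Markov process via a spatial inversion. Since $Z_t = X_{\tau_t}$ with $\tau$ built from the clock $\int_0^{\cdot} \sigma(X_s)^{-\alpha}\,ds$, the law of $Z$ issued from a large $|z|$ is determined entirely by the stable driver $X$ issued from $z$ together with this additive functional, so I would study the family $({\texttt P}_z)_z$ as $z\to+\infty$ (resp.\ $z\to-\infty$, resp.\ $|z|\to\infty$) and seek a Skorokhod limit ${\texttt P}_{+\infty}$ (resp.\ ${\texttt P}_{-\infty}$, ${\texttt P}_{\pm\infty}$) meeting the three requirements of Definition \ref{Fellerdef}. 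The inaccessibility condition (i) for $\alpha\in[1,2)$ is immediate from Theorem \ref{zthrm}, whose top rows show no explosion occurs, so the whole analysis in that regime concerns conditions (ii) and (iii).

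The central mechanism is the Riesz--Bogdan--\.Zak transformation, which after a Doob $h$-transform with harmonic function $h(x)=|x|^{\alpha-1}$ and a time change carries the inversion $x\mapsto -1/x$ and maps $X$ to a stable process of the dual type; crucially it sends the boundary at infinity to the origin. Under this inversion the weight $|x|^{\alpha-1}$ appearing in $I^{\sigma,\alpha}$ is exactly this harmonic function, equivalently the Jacobian/self-similarity factor produced by $x=1/u$ together with the $\alpha$-scaling of the clock, so that finiteness of $I^{\sigma,\alpha}(\R_+)$, of $I^{\sigma,\alpha}(\R_-)$, or of $I^{\sigma,\alpha}(\R)$ translates into finiteness of the perpetual integral governing how the inverted process spends time near $0$. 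I would then invoke the Lamperti--Kiu representation to write the inverted self-similar Markov process as the exponential of a time-changed Markov additive process $\xi$ and appeal to the entrance theory for self-similar Markov processes: a nondegenerate entrance law from $0$ exists, with continuous entrance, precisely when the associated exponential functional of $\xi$ is finite, which is equivalent to the relevant $I^{\sigma,\alpha}$ being finite. This is where the sign of $\alpha-1$ enters. For $\alpha\in(1,2)$ the underlying additive process has the drift that makes continuous entrance from a one-sided infinity possible, matching the diffusive Feller picture in which infinity is inaccessible yet enterable; for $\alpha\in(0,1)$ transience forces the process towards a single infinity and the candidate entrance is realised only through an initial jump, violating (iii), which accounts for all the crosses when $\alpha<1$. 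The Cauchy case $\alpha=1$ is critical: the weight $|x|^{\alpha-1}$ degenerates to $1$ and the Lamperti--Kiu additive process becomes centred, so the integrability threshold is pushed to the logarithmic moment $I^{\sigma,1}=\int_\R \sigma(x)^{-1}\log|x|\,dx$, and only the symmetric two-sided entrance from $\pm\infty$ survives.

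To separate the three types of infinity I would exploit the jump direction together with fluctuation theory and the Wiener--Hopf factorisation. For a spectrally one-sided driver the inverted process cannot oscillate across the origin, so continuous entrance can occur only from the single infinity dictated by the sign of the jumps, yielding the one-sided entries; the complementary entries are excluded by showing the candidate limit either fails to be supported on continuous paths at time zero or charges the wrong boundary. For a two-sided driver with $\alpha\in[1,2)$ the process makes infinitely many alternating large jumps near time zero, so the correct state space is the one-point compactification $\underline{\overline{\R}}$ and the entrance point is $\pm\infty$; here I would use Hunt--Nagasawa duality to identify the time-reversed dynamics and Getoor's transience/recurrence dichotomy to confirm that $\pm\infty$ is inaccessible while enterable exactly when $I^{\sigma,\alpha}(\R)<\infty$ (or $I^{\sigma,1}<\infty$ at $\alpha=1$). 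In each surviving case the Feller property (ii) is then verified by checking that the constructed semigroup maps $C_b$ of the relevant compactification into itself, using Skorokhod continuity of $z\mapsto{\texttt P}_z$ up to and including the boundary, and that ${\texttt P}_{\cdot}(\lim_{t\downarrow 0}Y_t=+\infty)=1$ (resp.\ $-\infty$, $\pm\infty$).

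The main obstacle I anticipate is establishing the continuous-entrance statement (iii) and the Feller continuity (ii) at the boundary simultaneously: one must show not merely that the one-dimensional laws of $Z$ under ${\texttt P}_z$ converge as $|z|\to\infty$, but that the full path laws converge in the Skorokhod topology to a law under which the path leaves infinity continuously. This is delicate because the clock $\int_0^{\cdot}\sigma(X_s)^{-\alpha}\,ds$ must accumulate infinite mass instantaneously at the start for the process to have already descended from infinity, and controlling this requires sharp estimates on the perpetual integral and its convergence precisely at the threshold encoded by $I^{\sigma,\alpha}$. The alternating $\pm\infty$ case compounds the difficulty, since one must control excursions to $+\infty$ and to $-\infty$ at once and show that the limiting path visits both infinities in every neighbourhood of time zero, which is where the Wiener--Hopf factorisation for the additive process underlying the Lamperti--Kiu representation does the essential work.
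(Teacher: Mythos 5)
Your architecture matches the paper's: time-change representation, Riesz--Bogdan--\.Zak inversion to trade entrance from infinity for entrance from zero of an $h$-transformed self-similar process, Lamperti--Kiu, Nagasawa time-reversal for necessity, Getoor's criterion at $\alpha=1$, and overshoot/inshoot arguments for the crossed-out cells. However, one step as you state it does not close. You assert that ``a nondegenerate entrance law from $0$ exists, with continuous entrance, precisely when the associated exponential functional of $\xi$ is finite, which is equivalent to the relevant $I^{\sigma,\alpha}$ being finite.'' This conflates two independent conditions. The existence of the entrance law at $0$ for the inverted $h$-process $\hat X^\circ$ is governed by weak convergence of the overshoots of the underlying MAP, condition \eqref{239}; this is a property of the stable driver alone, holds unconditionally for $\alpha\in(1,2)$ with two-sided jumps (Lemma \ref{zeroenter}), and has nothing to do with $\sigma$. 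What $I^{\sigma,\alpha}$ controls is a separate object: the $\sigma$-dependent clock $\int_0^\cdot \beta(\hat X^\circ_u)\,du$ with $\beta(x)=\sigma(1/x)^{-\alpha}|x|^{-2\alpha}$, whose finiteness under $\hat{\mathbb P}^\circ_0$ is what makes the inverted time-change well defined. These must be established separately; merging them leaves the sufficiency argument without the potential-density computation (\eqref{abc} and the estimate following \eqref{takelimitx}) that actually produces the integral test.

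In the necessity direction you also need a device that upgrades ``the perpetual integral $\int_0^\infty\sigma(\hat X^\circ_s)^{-\alpha}\,ds$ is almost surely finite'' to the deterministic statement $I^{\sigma,\alpha}(\R)<\infty$. The paper does this by censoring $\hat X^\circ$ to a positive self-similar Markov process, passing to its Lamperti L\'evy process, verifying that this process has local times and finite positive mean, and invoking the perpetual-integral test of \cite{DK}. Your sketch acknowledges that ``sharp estimates on the perpetual integral'' are needed but names no such criterion; without it (or the Getoor route you correctly reserve for $\alpha=1$, where the \cite{DK} test fails because the Cauchy process has no local times) the asserted equivalence between almost sure finiteness and $I^{\sigma,\alpha}<\infty$ is unproved. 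With these two repairs your plan coincides with the paper's proof.
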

	Without loss of generality, throughout the article we will study entrance from infinity for the SDE \eqref{2} killed upon first hitting the origin, denoted by $Z^\dagger$. The time-change representation from Proposition \ref{pr} holds unchanged, replacing the stable process $X$ by the stable process killed at the origin $X^\dagger$. The additional killing is crucial to apply stochastic potential theory (killing makes solutions transient) but does not restrict the generality of our results for the following reasons.
	\smallskip
	
	(i) If $\alpha\leq 1$, then solutions almost surely do not hit the origin, hence, no killing occurs. This is a consequence of the time-change representation \eqref{timechangesolution} and the fact that points are polar for stable processes with $\alpha\leq 1$.
	\smallskip
	
	(ii) If $\alpha>1$, then solutions to \eqref{2} might be killed at zero in finite time. For all initial conditions, solutions are weakly unique, non-explosive and known to be $C_b$-Feller on $\R$ (see for instance van Casteren \cite{Cas}, but note that his statement is stronger than his proofs and ${\mathcal P}_tf$ does not necessarily vanish at infinity). To construct a Markov process without killing at $0$ from the killed solution, one proceeds as follows. Take the killed process up to the killing time and `glue' thereafter a new unkilled solution with $Z_0=0$. The reader should keep in mind that constructing Markov processes by `glueing' two processes is far from easy and the literature is limited. For continuous processes we refer the reader to Nagasawa \cite{N2}, the gluing results needed for the present article can be found for instance in Werner \cite{Flo}, Theorem 1.6. The process obtained by gluing is not only Markov but also Feller, which follows directly from inspecting the resolvent operator of the Markov process obtained by gluing.

\subsection{Proof strategy for entrance from infinity}
	
	{\it (i) Direct arguments using the time-change representation \eqref{timechangesolution}.} No entrance from infinity in the impossible cases is argued as follows. If the stable process itself `diverges at fixed levels' for large starting conditions (i.e. does not hit intervals or has diverging overshoots), then the time-change in the representation \eqref{pr} cannot prevent  solutions to the SDE \eqref{2} having the same property. Such arguments explain all the crosses in the tables.
	
	\smallskip

	{\it (ii) Spatial-inversion.} To construct the semigroup extensions at infinity we proceed similarly in all cases. In Section \ref{sec_transforms} we prove an extension of the so-called Riesz--Bogdan--\.{Z}ak transformation, which we then use to write solutions to \eqref{2} as a time-change of the spatial inversion $x\mapsto 1/x$ of a certain $h$-transformed process $\hat X^\circ$. As such, starting the SDE from infinity is equivalent to starting $\hat X^\circ$ from $0$ and insisting on a suitably well behaved time-change. For the first of these two,   we can reduce the entrance of the auxiliary process $\hat X^\circ$ at 0 to recent result on self-similar Markov processes, the time-change can be controlled using recent explicit potential formulas.\smallskip
	
	{\it (iii) Time-reversal.} If infinity is an entrance point, we show (using the strong Markov property as a consequence of the Feller property) that $0$ for $\alpha>1$ (resp. $(-1,1)$ for $\alpha=1$) is hit in finite time. For $\alpha>1$ we use time-reversal to derive the integral test from a perpetual integral of a well-behaved L\'evy process. For $\alpha=1$ we apply an extended version of a transience result due to Getoor, see the Appendix.
	
\subsection{Proof strategy for explosion}
All recurrent cases are easily dealt with since the explosion time $T= \int_0^\infty \sigma(X_s)^{-\alpha}ds$ is obviously infinite almost surely if $X$ is recurrent. Since explosion is equivalent to entrance from $0$ of the space-inverted time-reversal (which can be identified as a time-change of the stable process itself), in the transient case $\alpha\in(0,1)$ we can work with a transience argument for $X$.

\section{Self-similar Markov processes and stable processes}\label{background}
The techniques we use to prove Theorem \ref{main} make significant use of the fact that  the driving stable L\'evy process in the SDE \eqref{2} is also a self-similar Markov process. We will  use a lot of facts and theory that have only very recently been developed in the field of self-similar Markov processes. As our desire is to keep the article mostly self-contained, we devote this section  to a brief overview of the recent results  that are needed. In particular, we look at how the theory of self-similar Markov processes plays  into the setting of stable L\'evy processes.

	\smallskip

	The reader will quickly realise that  there are many different types of processes that are involved in our analysis, least of all in this section. For this reason, we include as an annex at the end of this article, a glossary of mathematical symbols.

\subsection{Positive self-similar Markov processes}\label{pssMpsect}
A regular strong Markov family ${P}_z$, $z> 0$, with c\`adl\`ag paths on the state space $(0,\infty)$, with $0$ being an absorbing cemetery state,
 is called positive self-similar Markov process of index $\alpha>0$ (briefly pssMp) if the scaling property holds
 \begin{align}\label{self_sim}
   \text{The law of $(c\mathcal{X}_{c^{-\alpha}t},t\geq 0)$ under ${P}_z$ is ${P}_{cz}$},
 \end{align}
for all $z, c>0$. The analysis of positive self-similar processes is fundamentally based on the seminal work of Lamperti \cite{L72}  (see also  Chapter 13 of \cite{Kbook} for an overview). Lamperti's result gives a bijection between the class of pssMps and the class of L\'evy processes, possibly killed at an independent exponential time with cemetery state $-\infty$, such that, under ${P}_z$, $z>0$,
\begin{align}
\label{pssMpLamperti}
	\mathcal{X}_t=\exp(\xi_{\varphi_t}),\qquad t\leq I_\infty : = \int_0^\infty \exp(\alpha\xi_u)du,
\end{align}
where $\varphi_t=\inf\{s>0: \int_0^s \exp(\alpha \xi_u)du >t \}$ and the L\'evy process $\xi$  is started in $\log z$.

	 \smallskip

 It is a consequence of the Lamperti representation \eqref{pssMpLamperti} that pssMps can be split into conservative and non-conservative regimes. If $\zeta$ denotes the first hitting time of 0 by $\mathcal{X}$, then
\begin{align}\label{999}
\begin{split}
	 \quad\,\,\,{P}_z(\zeta <\infty)=1\text{ for all }z>0\quad &\Longleftrightarrow\quad\xi \text{ drifts to }-\infty\text{ or is killed},\\
	\quad\,\,\,{P}_z(\zeta<\infty)=0\text{ for all }z>0\quad&\Longleftrightarrow\quad\xi\text{ drifts to }+\infty \text{ or oscillates}.
	\end{split}
\end{align}
The dichotomy \eqref{999} can be used to decide if a pssMps is transient or recurrent by examining the corresponding L\'evy process $\xi$. We will see this methodology employed in later sections.\smallskip

%
%

For the present article we shall need one of several continuations of Lamperti's work. For the conservative case ${P}_z(\zeta<\infty)=0$, an important question to ask is: When is it possible to treat $0$ as an entrance point? More precisely, one asks for a Feller extension $({P}_z,z\geq 0)$ of $({P}_z,z> 0)$. It  was shown incrementally in Bertoin and Yor \cite{BY02},  Caballero and Chaumont \cite{CC}, Chaumont et al. \cite{CKPR} and also in Bertoin and Savov \cite{BS} that, if the ascending ladder height process of $\xi$ is non-lattice, $0$ is an entrance point for $\mathcal{X}$ if and only if  the overshoot distribution of $\xi$ over asymptotically large levels converges. That is to say, if $(\mathbf{P}_x, x\in\R)$ are the distributions of $\xi$ and $\bf P=\bf P_0$, then $0$ is an entrance point for $\mathcal{X}$ if and only if
\begin{align}\label{C2}
	\lim_{x\uparrow \infty} {\bf P}(\xi_{\varsigma^+_x}-x\in dy), \qquad y\geq 0,
\end{align}
exists in the sense of weak convergence, where $\varsigma^+_x:=\inf\{t>0:\xi_t\geq x\}$. If \eqref{C2} holds then one says the L\'evy process $\xi$ has stationary overshoots. The probabilistic condition which is equivalent to stationary overshoots is complicated to verify directly but has an explicit analytic counterpart in terms of the L\'evy triplet (see for instance Chapter 7 of \cite{Kbook}). In this paper, when encountering the need to verify stationary overshoots as such, we will do so directly.



\subsection{Real-valued self-similar Markov processes}\label{sec:rssMp}
A real self-similar Markov process (rssMp) extends the notion of a pssMp albeit the requirement that the process is positive  is dropped allowing the exploration of $\mathbb{R}$ until absorption in the cemetery state 0 (if at all).  
%
 Significant effort has been invested in the last few years to extend the theory of pssMp to the setting of $\mathbb{R}$.
 The description below is the culmination of the work in \cite{GV, VG, Kiu, Chy-Lam} with more recent clarity given in Chaumont et al. \cite{CPR}, Kuznetsov et al. \cite{KKPW} and Dereich et al. \cite{DDK}. 
  \smallskip

Analogously to Lamperti's representation, for a real self-similar Markov process $\mathcal{X}$ there is a Markov additive process $((\xi_t,J_t), t\geq 0)$ on $\R\times \{-1,1\}$ such that
\begin{align}\label{LK}
   \mathcal{X}_t =  J_{\varphi_t}\exp\bigl( \xi_{\varphi_t}\bigr) ,\quad  t\leq I_\infty : = \int_0^\infty e^{\alpha\xi_s}ds,
\end{align}
where $\varphi_t=\inf\{s>0 : \int_0^s \exp(\alpha \xi_u)du >t\}$ and $(\xi_0,J_0)=(\log |z|,[z])$ with
$$
[z]=\begin{cases} 1 &\mbox{ if } z>0, \\-1 &\mbox{ if }z<0.\end{cases}
$$
The representation \eqref{LK} is known as the Lamperti--Kiu transform. 
Here, by  Markov additive process (MAP), we mean  the regular strong Markov process with probabilities ${\bf P}_{x,i}$, $x\in\mathbb{R}$, $i\in\{-1,1\}$, such  that $(J_t, t\geq 0)$ is a continuous time Markov chain on $\{-1,1\}$ (called the modulating chain) and, for any $i\in \{-1,1\}$ and $s,t\geq 0$,
\begin{align*}
	&\text{given }\{J_t=i\},\text{ the pair }(\xi_{t+s}-\xi_t,J_{t+s})_{s\geq 0} \text{ is independent of the past}\\
	&\qquad\text{ and has the same distribution as }(\xi_s, J_s)_{s\geq 0}\text{ under }{\bf P}_{0,i}.
\end{align*}
If the MAP is killed, then $\xi$ is sent  to the cemetery state $\{-\infty\}$. All background results for MAPs that relate to the present article can be found in the Appendix of Dereich et al. \cite{DDK}.\smallskip

The mechanism behind the Lamperti--Kiu representation is thus simple. The modulation $J$ governs the sign and, on intervals of time for which there is no change in sign, the Lamperti--Kiu representation effectively plays the role of the  Lamperti representation of a pssMp. In a sense, the MAP formalism gives a concatenation of signed Lamperti representations between times of sign change.
\begin{rem}\rm\label{rssMpispssMp} 
	Typically one can assume the Markov chain $J$ to be irreducible as otherwise the corresponding self-similar Markov processes only switches signs at most once and can therefore be treated using the theory of  pssMp.
\end{rem}
Analogously to L\'evy processes one knows that an unkilled MAP $(\xi,J)$ either drifts to $+\infty$ (i.e. $\lim_{t\uparrow\infty}\xi_t=+\infty$), drifts to $-\infty$ (i.e. $\lim_{t\uparrow\infty}\xi_t=-\infty$) or oscillates (i.e. $\liminf_{t\uparrow\infty}\xi_t=-\infty$ and $\limsup_{t\uparrow\infty}\xi_t=+\infty$), in the almost sure sense. Moreover, just in the case of  pssMp a simple 0-1 law for rssMp holds, distinguishing the case of conservative processes from non-conservative processes. We have 
	\begin{align*}
{P}_z(\zeta<\infty)=1\text{ for all }z\neq 0\quad&\Longleftrightarrow\quad(\xi,J) \text{ drifts to }-\infty \text{ or is killed,} \\{P}_z(\zeta<\infty)=0\text{ for all }z\neq 0\quad&\Longleftrightarrow\quad (\xi,J)\text{ drifts to }+\infty \text{ or oscillates},
\end{align*}
where $\zeta=\inf\{t>0: \mathcal{X}_t=0\}$. Generalizing the results for pssMps, the existence of 0 as an entrance point was addressed by Dereich et al. \cite{DDK}. It was shown that a necessary and sufficient condition for the existence of a Feller extension $(\P_z,z\in \R)$ under which trajectories leave $0$ continuously  of $(\P_z,z\neq 0)$ in terms of the underlying MAP is weak convergence of the overshoots; that is,
\begin{align}\label{239}
	\lim_{a\to+\infty} {\bf P}_{0,i}&(\xi_{\varsigma_a^+}-a \in dy, J^+_{\varsigma_a^+} = j), \qquad y\geq 0, i,j\in\{-1,1\},
\end{align}
exists in the sense of weak convergence independently of $i\in \{-1,1\}$ and is non-degenerate, where $\varsigma_a^+=\inf\{t>0:\xi_t\geq a\}$.  
Just as in the pssMp setting, this can be thought of as a natural condition for similar reasons. As for L\'evy processes, there is an analytic condition for \eqref{239} in terms of the generalized triplet for MAPs, see Theorem 5 of \cite{DDK}.

%

\subsection{Stable processes and their path functionals as  rssMp}\label{sec:stablerssMp}

Stable processes and certain types of  conditioned stable processes are linked  to the theory of self-similar Markov processes. A little care is needed since the definition of a real self-similar Markov process given above asks for $0$ to be absorbing.

\begin{itemize}
\item[\bf (1)]  To  discuss stable process in the light of self-similarity we should remind ourselves of the accessibility of the single point set $\{0\}$, see for instance Chapter 7 of \cite{Kbook}. If we set $\tau^{\{0\}} = \inf\{t>0: X_t = 0\}$, then, for all $x\neq0$,
\begin{align*}
\mathbb{P}_x\big(\tau^{\{0\}}<\infty \big) =
 \begin{cases}
0&\text{ if } \alpha\in (0,1]\\
1&\text{ if } \alpha\in (1,2)
\end{cases},
\end{align*}
for all $x\neq 0$. In other words, $\{0\}$ is polar if and only if $\alpha\leq 1$. In response to this observation, it is $(X^\dagger_t , t\geq 0)$ which conforms to our definition of a positive or real self-similar Markov process, where 
\begin{equation}
X^\dagger_t : = X_t\mathbf{1}_{(t<\tau^{\{0\}})},\qquad  t\geq 0.
\label{Xdagger}
\end{equation}
 This is clearly the case when $0$ is polar as $X^\dagger = X$. However, when 0 is not polar, a little more detail is deserving in order to verify the scaling property. Indeed, suppose momentarily we write $(X^{(x)}_t, t\geq 0) $, $x\neq 0$, to indicate the initial value of the process, i.e. $X^{(x)}_0 = x$. Then, for $c>0$,
\begin{align*}
\tau^{\{0\}}&=\inf\{t>0: X^{(x)}_t =0\}  \\
&=c^{-\alpha}\inf\{c^{\alpha}t>0: cX^{(x)}_{c^{-\alpha} c^{\alpha}t} =0\} \\
&
=:c^{-\alpha}\inf\{s>0: \tilde{X}^{(cx)}_{s} =0\}\\
&=:c^{-\alpha}\tilde{\tau}^{\{0\}},
\end{align*}
where $\tilde{X}^{(cx)}_{s}: = cX^{(x)}_{c^{-\alpha}s}$, $s\geq 0$, is equal in law to $X^{(cx)}_s$, $s\geq 0$. With this in hand, we now easily verify that, for  
$c>0$,
\[
cX^{(x)}_{c^{-\alpha}t}\mathbf{1}_{(c^{-\alpha}t<\tau^{\{0\}})} = \tilde{X}^{(cx)}_{t}\mathbf{1}_{(t< \tilde{\tau}^{\{0\}})}, \qquad t\geq 0,
\]
and, as such, the right-hand side is equal in law to $(X^\dagger, \mathbb{P}_{cx})$.

\smallskip

From Section \ref{sec:rssMp} we see that there is a family of MAPs corresponding to the family of killed stable processes through the Lamperti--Kiu representation. A characterisation of this family $(\xi, J)$ was uncovered in Chaumont et al. \cite{CPR} (see also Kuzentsov et al. \cite{KKPW}). From the characterization it can be deduced that
\begin{itemize}
\item $(\xi,J)$ drifts to $+\infty$ if $\alpha\in(0,1)$ and $\xi$ is not the negative of a subordinator,
\item $(\xi, J)$ drifts to $-\infty$ if $\alpha\in(0,1)$ and $\xi$ is the negative of a subordinator
\item $(\xi, J)$ oscillates if $\alpha = 1$,
\item $(\xi, J)$ drifts to $-\infty$ if  $\alpha\in(1,2)$. 
\end{itemize}
This path behaviour is consistent through the Lamperti--Kiu represenation with the  fact that, as a Markov process, a stable L\'evy process
\begin{itemize}
\item  is transient when $\alpha \in(0,1)$, in which case 0 is polar, hence, $\lim_{t\to\infty}|X_t| = \infty$,
\item  is recurrent when $\alpha =1$ but points are polar, hence, $\limsup_{t\to\infty}|X_t| =\infty$ and $\liminf_{t\to\infty}|X_t| = 0$, 
\item  almost surely hits zero when $\alpha\in(1,2)$ and $\lim_{t\to\tau^{\{0\} }}|X_t| = 0$.
\end{itemize}
See for instance the discussion around Theorems 7.4 and 7.5 in \cite{Kbook} for these facts.

\item[\bf (2)]   
More examples of a rssMps that can be derived from stable processes emerge through special kinds of conditioning. We confine this remark  to the setting of two-sided jumps. When $\alpha\in(0,1)$, it was shown in Kyprianou et al. \cite{KRS} that, for $x\neq 0$, $A\in\mathcal{F}_t: = \sigma(X_s, s\leq t)$ and each $a>0$, 
\begin{align}\label{attract}
	 \mathbb{P}^{\circ}_x\big(A{\color{black}\,\cap\,\{t<\tau^{(-a,a)}\}}\big)=\lim_{\varepsilon\to 0}\mathbb{P}_x\big(A{\color{black}\,\cap\,\{t<\tau^{(-a,a)}\}}\,\big|\,\tau^{(-\varepsilon,\varepsilon)}<\infty\big),
\end{align}
where $\tau^{(-a,a)}=\inf\{t>0: |X_t|<a \}$,
defines a consistent family of probability laws such that $(X, \mathbb{P}^{\circ}_x)$, $x\neq 0$, defines a rssMp, referred to as the stable process conditioned to continuously absorb at the origin. Additionally they show that, irrespective of the point of issue, the absorption time is almost surely finite and  that $\P^\circ$ is an $h$-transform of $\P$ via
\begin{equation}\label{updownCOM}
\left.\frac{{\rm d}\mathbb{P}^\circ_x}{{\rm d}\mathbb{P}_x}\right|_{\mathcal{F}_t} = \frac{h(X_t)}{h(x)}, \qquad t\geq 0, x\in\mathbb{R}\backslash\{0\},
\end{equation}

where 
\begin{align}\label{h}
h(x): =
-\Gamma(1-\alpha) \left(\dfrac{\sin(\pi\alpha\hat\rho)}{\pi}\mathbf{1}_{(x\geq 0)}+ \dfrac{\sin(\pi\alpha\rho)}{\pi}\mathbf{1}_{(x<0)}\right) |x|^{\alpha-1}, \quad x\in\mathbb{R}.
\end{align}

Moreover, when $\alpha\in(1,2)$, it was also shown in Kyprianou et al. \cite{KRS} as well as in Chaumont et al. \cite{CPR} that, for $x\neq 0$ and $A\in\mathcal{F}_t$,
\begin{align}\label{avoid}
	\mathbb{P}^{\circ}_x(A) =\lim_{a\to \infty}\mathbb{P}_x\big( A{\color{black}\,\cap\,\{t<\tau^{(-a,a)^{\rm c}}\}}\,\big|\,\tau^{(-a,a)^{\texttt c}}<\tau^{\{0\}}\big),
\end{align}
where $\tau^{(-a,a)^c}=\inf\{t>0: |X_t|\geq a \}$,
also defines a consistent family of probability laws such that $(X, \mathbb{P}^{\circ}_x)$, $x\neq 0$, defines rssMp referred to as  the stable process conditioned to avoid the origin. Moreover, 
the absolute continuity \eqref{updownCOM} is still valid, albeit with $X$ replaced by $X^\dagger$. 

\smallskip

It is a straightforward exercise to show that expectations of the form $\mathbb{E}^\circ_x[f(cX_{c^{-\alpha}s}, s\leq t)]$, where $f$ is bounded and measurable, transform to $\mathbb{E}^\circ_{cx}[f(X_s, s\leq t)]$ thanks to the shape of the $h$-transform and the inherent scaling of the stable process. Said another way, the process $(X^\circ, \mathbb{P}_x)$, $x\neq 0$, is a rssMp.

\smallskip

The reader may be left wondering if either of these two conditionings applies when $\alpha = 1$ even though the $h$-transform becomes trivial. Clearly conditioning to avoid the origin is meaningless as 0 is inaccessible for the Cauchy process. It also turns out that conditioning the Cauchy process to continuously absorb at the origin cannot be made sense of. In this way the Cauchy process asserts itself again as a distinguished intermediary case in the class of stable processes.

\end{itemize}
Amongst the above examples of rssMp, i.e. the stable process $X$ killed on hitting the origin, the stable process conditioned to continuously absorb at the origin and the stable process conditioned to avoid the origin, we can examine the existence of $0$ as an entrance point.  Clearly when $\alpha \in (0,1]$ we already know that $X$ is well-defined as entering from 0 (it never hits 0 again). Moreover, when $\alpha\in(1,2)$, the process $X$ is instantaneously absorbed at 0 when issued there and, hence, the origin cannot serve as an entrance point. When $\alpha\in(0,1)$, it is also clear that 0 cannot serve as an entrance point for the stable process conditioned to continuously absorb at the origin; cf. Kyprianou \cite{KALEA}.
 However, when $\alpha\in(1,2)$, it is meaningful to check whether $0$ is an entrance point in the sense that there is a Feller extension $\P^\circ_x$, $x\in \R$ of $\P^\circ_x$, $x\neq 0$.
\begin{lemma}\label{zeroenter} 
	When $ \alpha\in(1,2)$ and the stable process has two-sided jumps, then $0$ is an entrance boundary of $\mathbb{P}^\circ_x$, $x\neq 0$. 
\end{lemma}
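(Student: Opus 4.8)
The plan is to verify, for the process $(X^\circ,\mathbb{P}^\circ_x)$, $x\neq 0$, the criterion for entrance at the origin recalled in Section~\ref{sec:rssMp}. Since this process is a real self-similar Markov process, the characterisation of Dereich et al.\ \cite{DDK} reproduced in \eqref{239} applies: the point $0$ is an entrance boundary precisely when the Markov additive process $(\xi^\circ,J^\circ)$ arising in its Lamperti--Kiu representation does not drift to $-\infty$ (so that the upward passage times $\varsigma^+_a$ are a.s.\ finite for every $a$) and the overshoot distribution $\mathbf{P}_{0,i}(\xi^\circ_{\varsigma^+_a}-a\in dy,\,J^\circ_{\varsigma^+_a}=j)$ converges weakly, non-degenerately, and independently of $i$, as $a\to+\infty$. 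The two-sided jump hypothesis guarantees that the modulating chain $J^\circ$ is genuinely irreducible (cf.\ Remark~\ref{rssMpispssMp}), so the problem does not degenerate to the pssMp setting.

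First I would identify the transformed MAP. The $h$-transform \eqref{updownCOM}, with $h(x)=c_{[x]}|x|^{\alpha-1}$ for sign-dependent constants $c_{\pm1}$, reads at the level of the Lamperti--Kiu clock as $h(X_t)=c_{J_{\varphi_t}}\exp((\alpha-1)\xi_{\varphi_t})$. Consequently the change of measure from the killed stable process $X^\dagger$ to $X^\circ$ translates into a multiplicative (Esscher-type) change of measure on the MAP $(\xi,J)$ of $X^\dagger$ driven by the MAP-harmonic function $(x,i)\mapsto c_i\,e^{(\alpha-1)x}$. This produces a new, conservative MAP $(\xi^\circ,J^\circ)$ whose characteristics, and in particular whose ascending ladder structure, can be read off from those of $(\xi,J)$, computed explicitly in Chaumont et al.\ \cite{CPR} (see also \cite{KKPW}).

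Next I would pin down the asymptotic drift. Since $X^\circ$ is the stable process conditioned to avoid the origin, it is never absorbed at $0$; by the $0$--$1$ law for rssMp recalled in Section~\ref{sec:rssMp}, this forces $(\xi^\circ,J^\circ)$ either to drift to $+\infty$ or to oscillate, and in particular never to be killed. This is consistent with the fact that the untransformed MAP of $X^\dagger$ drifts to $-\infty$ for $\alpha\in(1,2)$, the Esscher tilt by $\alpha-1$ reversing the drift. In either case $\limsup_{t}\xi^\circ_t=+\infty$, which secures a.s.\ finiteness of $\varsigma^+_a$ for all $a$ and well-definedness of the overshoots.

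The main work, and the step I expect to be the principal obstacle, is the overshoot convergence \eqref{239}. Here I would invoke the Markov-additive renewal theorem: for an oscillating or upward-drifting MAP whose ascending ladder height process is non-lattice and has finite mean, the overshoot over a high level converges to the stationary size-biased overshoot law, which is non-degenerate and independent of the entrance modulator $i$. Non-latticeness is inherited from the absolutely continuous, two-sided L\'evy measure \eqref{jumpmeas} of the driving stable process. The delicate point is the finite-mean requirement: the Esscher tilt by $\alpha-1\in(0,1)$ alters the large-jump behaviour of the MAP, so one must check that the ascending ladder height of $(\xi^\circ,J^\circ)$ still has finite mean. This reduces to an integrability estimate on the positive tail of the jump measure of $\xi^\circ$, which remains integrable precisely because $\alpha-1<1$ keeps the relevant exponential moment finite; once this is confirmed the renewal theorem applies and \eqref{239} follows. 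Assembling these pieces, the criterion of \cite{DDK} is met and $0$ is an entrance boundary for $\mathbb{P}^\circ_x$, $x\neq 0$.
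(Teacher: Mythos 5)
Your proposal is correct in outline, but it follows the route that the paper explicitly flags and then declines to take. The paper's proof of Lemma \ref{zeroenter} notes that one could ``either try to appeal to the analytic condition of Dereich et al.\ \cite{DDK} for the convergence of overshoots or invoke known formulas for overshoots for stable processes,'' and then carries out the \emph{second} option: using the absolute continuity \eqref{updownCOM} it rewrites the overshoot condition \eqref{239} as the existence of the weak limit \eqref{othercheck} of $h(y)\,\mathbb{P}_x\bigl(X_{\tau^{(-1,1)^c}}\in dy,\ \tau^{(-1,1)^c}<\tau^{\{0\}}\bigr)/h(x)$ as $|x|\to0$, and verifies this directly from the explicit two-sided exit identity \eqref{2sideexit} of Kyprianou \cite{deep1} together with L'H\^opital's rule (the two terms in \eqref{2sideexit} are individually of order one while $h(x)\sim c|x|^{\alpha-1}\to0$, so the cancellation must be tracked). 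You instead take the first option: identify $(\xi^\circ,J^\circ)$ as an Esscher tilt of the MAP of $X^\dagger$ by the harmonic function $(x,i)\mapsto c_i e^{(\alpha-1)x}$, and then apply Markov-additive renewal theory to get convergence of overshoots. What your route buys is structural clarity and potential generality (it would apply to other $h$-transforms with the same tilt structure); what it costs is that the whole argument hinges on the finite-mean property of the ascending ladder height of the tilted MAP, which you correctly isolate as the delicate step but only sketch. To close that step you would in any case need the explicit jump characteristics of the stable MAP from \cite{CPR,KKPW} (the same-sign jump density decays like $e^{-\alpha y}$ at $+\infty$, so after tilting by $\alpha-1$ it decays like $e^{-y}$, giving a finite, and in fact positive, mean for $\xi^\circ$ and hence a finite-mean ladder height), plus non-latticeness. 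The paper's computation with \eqref{2sideexit} sidesteps the renewal theorem and the ladder-height estimate entirely at the price of an explicit limit calculation. Both arguments are valid; yours is not a gap so much as an alternative whose key integrability estimate should be written out rather than asserted.
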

\begin{proof}
As remarked above, the conditioned processes are also rssMps so we can apply the results on entrance from $0$ for rssMps (see Section \ref{sec:rssMp} and the necessary and sufficient condition \eqref{239} in particular). 
From the Lamperti--Kiu representation there is a corresponding MAP that we denote by $(\xi^\circ,J^\circ)$ for which we need to check convergence of overshoots \eqref{239}. One could either try to appeal to  the analytic condition of Dereich et al. \cite{DDK} for the convergence of overshoots or invoke known formulas for overshoots for stable processes.

\smallskip

To carry out the second option, we   note that, due to the Lamperti--Kiu representation, the range of $J^\circ_t\exp(\xi^\circ_t)$, $t\geq 0$, agrees with that of the conditioned process. Therefore, using the absolute continuity relation \eqref{updownCOM}, \eqref{239} is equivalent to the existence of the weak limit
\begin{equation}\label{othercheck}
\lim_{|x|\to0}\mathbb{P}^\circ_x(X_{\tau^{(-1,1)^c}}\in dy)=\lim_{|x|\to0}\frac{h(y)}{h(x)}\mathbb{P}_x\big(X_{\tau^{(-1,1)^c}}\in dy, \tau^{(-1,1)^c}<\tau^{\{0\}}\big), \qquad |y|\geq 1,
\end{equation}
in the sense of weak convergence, where $\tau^{(-1,1)^c} = \inf\{t>0:| X_t|\geq1\}$. Fortunately, there are fluctuation identities known in explicit form in existing literature, which enables us to deal with the righthand side of \eqref{othercheck} directly. 
 Indeed, in this case, we may appeal to Corollary 2 of Kyprianou \cite{deep1} which tells us, e.g. when $y>1$ and $x\in(0,1)$, for all $\alpha\in(0,2)$,
 \begin{align}\label{2sideexit}
& \mathbb{P}_x\big(X_{\tau^{(-1,1)^c}}\in dy, \tau^{(-1,1)^c}<\tau^{\{0\}}\big)/dy\notag\\
&=\frac{\sin(\pi\alpha\rho)}{\pi}(1+x)^{\alpha\rhohat}(1-x)^{\alpha\rho}(1+y)^{-\alpha\rhohat}(y-1)^{-\alpha\rho}( y-x)^{-1}\notag\\
&\quad-c_\alpha \frac{\sin(\pi\alpha\rho)}{\pi}(1+y)^{-\alpha\rhohat}(y-1)^{-\alpha\rho} y^{-1}x^{\alpha-1} \int_1^{1/x} (t-1)^{\alpha\rho-1} (t+1)^{\alpha\rhohat-1}\, d t,
 \end{align}
 where $c_\alpha = \max\{(\alpha-1),0\}$.
Recalling the definition of $h$ we use the above identity together with L'H\^opital's rule to deduce the righthand side of \eqref{othercheck} exists. The details for these and  other combinations of $x$ and $y$ are left to the reader (see also Remark 6 in Profeta and Simon \cite{PS}). Hence, overshoots of $(\xi, J)$ converge and the theory of rssMps implies the claim.
 \end{proof}
To complete this section, we recall a remarkable  result which gives a pathwise connection between $X$ and the conditioned processes given in \eqref{attract} and \eqref{avoid}.
In the following result, which is due to Bogdan and \.Zak \cite{BZ}, we write $\hat{\mathbb{P}}_{x}$ for the law of $-X$ under $\P_x$. Under $\hat{\P}_x$ the canonical process is again a stable process, the so-called dual stable process.

\begin{theorem}[Riesz--Bogdan--\.{Z}ak transform]\label{RBZthrm} 
Suppose that $X$ under $\mathbb{P}_x$ has two-sided jumps and
\begin{align}
\eta_t = \inf\left\{s>0 : \int_0^s |X^\dagger_u|^{-2\alpha}{\rm d}u >t\right\},\quad t\geq 0. 
\label{firsteta}
\end{align}
Then, for all $x\neq 0$, the law of $(1/X^\dagger_{\eta_t})_{t\geq 0}$ under $\hat{\mathbb{P}}_{x}$ is $\mathbb{P}_{1/x}^\circ$.
\end{theorem}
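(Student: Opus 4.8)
The plan is to establish the equality of laws by matching transition semigroups, exploiting that both processes in the statement are time-homogeneous Markov processes. The right-hand object $(X^\dagger,\mathbb{P}^\circ_{1/x})$ is Markov because $\mathbb{P}^\circ$ is the Doob $h$-transform \eqref{updownCOM}, while the left-hand object $(1/X^\dagger_{\eta_t})_{t\ge0}$ under $\hat{\mathbb{P}}_x$ is Markov because it arises from the strong Markov process $\hat X^\dagger$ by a time change along the additive functional clock $A_s=\int_0^s|\hat X^\dagger_u|^{-2\alpha}\,\mathrm{d}u$ followed by the state-space homeomorphism $y\mapsto 1/y$ of $\mathbb{R}\setminus\{0\}$. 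It therefore suffices to show that, for every $x\neq0$, every $t>0$ and every bounded measurable $f$,
\begin{align*}
\hat{\mathbb{E}}_x\big[f(1/\hat X^\dagger_{\eta_t})\big] = \mathbb{E}^\circ_{1/x}\big[f(X^\dagger_t)\big] = \frac{1}{h(1/x)}\,\mathbb{E}_{1/x}\big[h(X^\dagger_t)\,f(X^\dagger_t)\big],
\end{align*}
the last equality being \eqref{updownCOM}; as $x$ ranges over $\mathbb{R}\setminus\{0\}$ this identifies the full transition semigroups and hence the laws.

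The key structural fact I would use is that $\mathsf{K}G(x):=h(x)\,G(1/x)$ is a Kelvin transform for the killed stable generator: with $d=1$ the exponent $\alpha-1$ in $h$ of \eqref{h} is precisely the Riesz exponent $\alpha-d$, and the two sign-dependent constants in \eqref{h} are exactly those that get interchanged, via $\rho\leftrightarrow\hat\rho$, on passing from $X$ to its dual $\hat X=-X$. Recall that $X$ and $\hat X$ are in duality with respect to Lebesgue measure, so their killed densities satisfy $\hat p^\dagger_t(x,y)=p^\dagger_t(y,x)$. The mechanism is then visible through the Lamperti--Kiu representation \eqref{LK}: inversion $y\mapsto 1/y$ negates the ordinate $\xi\mapsto-\xi$ (since $1/(Je^{\xi})=Je^{-\xi}$ for $J\in\{-1,1\}$), and the weight $|X|^{-2\alpha}=e^{-2\alpha\xi}$ appearing in the clock is exactly what converts the Lamperti clock $\int e^{\alpha\xi}\,\mathrm{d}u$ into the clock $\int e^{-\alpha\xi}\,\mathrm{d}u$ required for the inverted, still self-similar, process of index $\alpha$.

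Concretely, the heart of the matter is the Kelvin identity
\begin{align*}
\mathcal{L}^\dagger[\mathsf{K}G](x) = |x|^{-2\alpha}\,\mathsf{K}\big[\hat{\mathcal{L}}^\dagger G\big](x), \qquad \mathsf{K}G(x)=h(x)\,G(1/x),
\end{align*}
relating the generator $\mathcal{L}^\dagger$ of $X^\dagger$ to that of its dual $\hat X^\dagger$. The $|x|^{-2\alpha}$ prefactor is cancelled precisely by the time change $\eta$, whose effect is to multiply the dual generator by $|x|^{2\alpha}$; writing $\iota(x)=1/x$ and reading the resulting relation as $h(x)^{-1}\mathcal{L}^\dagger(h\,g)(x)=|x|^{-2\alpha}\big(\hat{\mathcal{L}}^\dagger(g\circ\iota)\big)(1/x)$ shows that the generator of $(1/\hat X^\dagger_{\eta_t})_{t\ge0}$ coincides with the $h$-transform generator $\mathcal{L}^\circ=h^{-1}\mathcal{L}^\dagger(h\,\cdot\,)$ of $\mathbb{P}^\circ$ read off from \eqref{updownCOM}. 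I would prove the Kelvin identity itself by the substitution $y\mapsto 1/y$ in the stable jump kernel \eqref{jumpmeas}, whose Jacobian $|y|^{-2}$ combines with the Riesz weight $h(y)\propto|y|^{\alpha-1}$ to reproduce the dual jump kernel, so that after handling the principal-value corrections the whole identity collapses to an algebraic relation among $\Gamma(1+\alpha)\sin(\pi\alpha\rho)/\pi$ and its $\rho\leftrightarrow\hat\rho$ counterpart.

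The step I expect to be the main obstacle is the rigorous treatment of the time change together with the upgrade from the (formal) infinitesimal identity to equality of semigroups: the transition density of $\hat X^\dagger_{\eta_t}$ is not a naive reparametrisation of $\hat p^\dagger$, so I would either argue throughout at the level of resolvents or Dirichlet forms, where the clock enters cleanly as a change of speed measure $|x|^{-2\alpha}\,\mathrm{d}x$, and then promote the generator identity to the semigroup by a core/uniqueness argument, or carry the random clock through an explicit occupation-density computation. A related delicate point is the boundary bookkeeping under the Kelvin map: inversion exchanges $0$ and $\infty$, so I must check that killing of $\hat X^\dagger$ at $0$ corresponds exactly to the origin-avoidance charged by $\mathbb{P}^\circ$, with no mass created from, nor lost to, infinity; this is where the two absorbing/forbidden structures must be matched under $0\leftrightarrow\infty$, and I would treat the four sign combinations of $(x,y)$ separately, as is done for the exit identity \eqref{2sideexit}.
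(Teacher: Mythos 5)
The paper does not actually prove this theorem: it is imported verbatim from the literature, attributed to Bogdan and \.Zak \cite{BZ} (who treated the symmetric case via the Kelvin transform of Riesz potential theory), with the general two-sided version being the extension developed in the subsequent literature on self-similar Markov processes (e.g.\ via the Lamperti--Kiu/MAP formalism of \cite{CPR, KKPW} and the inversion--duality results for ssMp). Your route is therefore not being compared against an in-paper argument but against the original one, and in that light your strategy is essentially the classical Bogdan--\.Zak proof: the Kelvin identity $\mathcal{L}^\dagger[\mathsf{K}G](x)=|x|^{-2\alpha}\mathsf{K}[\hat{\mathcal{L}}^\dagger G](x)$ with $\mathsf{K}G(x)=h(x)G(1/x)$, the cancellation of the $|x|^{-2\alpha}$ factor by the Volkonskii time change, and the identification of the resulting generator with the Doob $h$-transform generator coming from \eqref{updownCOM}. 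The modern alternative, which the paper's framework makes natural, is to avoid generators entirely: read the inversion through the Lamperti--Kiu representation \eqref{LK} (inversion negates $\xi$, the clock $|X|^{-2\alpha}$ converts the Lamperti clock accordingly), and then verify that the MAP of $-\hat{\xi}$ coincides with the MAP of the $h$-transformed process by comparing the explicit matrix exponents of \cite{CPR, KKPW} — exactly as the paper does for the spectrally one-sided analogue in the proof of Theorem \ref{CRBZ}. That route trades your analytic difficulties for an appeal to known exponent formulas.

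If your sketch were to be completed as written, the genuine work lies in the two places you have flagged but not resolved. First, for $\alpha\in(1,2)$ the generator of $X^\dagger$ carries a compensated integral, and the substitution $y\mapsto1/y$ in \eqref{jumpmeas} does not merely produce an algebraic identity among the constants $\Gamma(1+\alpha)\sin(\pi\alpha\rho)/\pi$ and their $\rho\leftrightarrow\hat\rho$ counterparts: it generates additional drift-type terms whose cancellation is equivalent to the harmonicity of $h$ for $X^\dagger$ (which itself must be established, since \eqref{updownCOM} is only quoted, not derived, in the paper). Second, the upgrade from a pointwise generator identity on smooth compactly supported functions in $\mathbb{R}\setminus\{0\}$ to equality of semigroups requires a core or martingale-problem uniqueness argument that is not automatic for a process killed at an irregular boundary point; working at the level of resolvents with the duality \eqref{classicdual} and the explicit killed potential densities, as the paper does elsewhere, is the cleanest way to close this. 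Neither issue invalidates the plan, but both must be carried out before the argument constitutes a proof.
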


In  words, this theorem gives a pathwise link (spatial inversion and time-change) between the killed stable process $X$ and the conditioned process ($h$-transform).

\subsection{Stable processes and their path functionals as  pssMp}\label{pssmpexamples}

Stable processes are also naturally linked to pssMp by looking at different functionals of $X$. Three pertinent cases in point are that of the censored stable, the radial process and the stable process conditioned to stay positive.

\begin{itemize}

\item[\bf (1)]  For the (positive) censored stable process, define the occupation time of $(0,\infty)$,
\begin{align*}
	A_t = \int_0^t \mathbf{1}_{(X^\dagger_s > 0)} \, d s,
\end{align*}
and let $\gamma_t = \inf\{ s \ge 0 : A_s > t \}$ be its right-continuous inverse.
The process $(X^\dagger_{\gamma_t})_{t\geq 0}$ is what is understood to be the (negatively) censored stable process. In words, this is the process
formed by erasing the negative components of $X^\dagger$ and shunting together  the resulting sections of trajectory so that the temporal gaps are closed.
The L\'evy process that underlies its Lamperti representation, say $\xi^{>}$, was found in Theorem 5.5 of Kyprianou et al. \cite{KPW}; up to a multiplicative constant, its characteristic exponent has the form 
\begin{equation}
\Psi^{>}(z)= \frac{\Gamma(\alpha\rho - \iu{z})}{\Gamma(-\iu{z})}
    \frac{\Gamma(1 - \alpha\rho + \iu{z})}{\Gamma(1 - \alpha + \iu{z})} , \qquad z\in\mathbb{R}.
    \label{censoredpsi}
\end{equation}
Note, here we use the convention that  $\Psi^{>}(z) = -t^{-1}\log\mathbf{E}^>[\exp({\rm i }z\xi^>_t)]$, $t>0$, and we consistently use this arrangement when citing characteristic exponents of other L\'evy processes.
It is not difficult to imagine that one may also consider the analogue of this process when we censor away the positive components of $X$. In that case, the roles of $\rho$ and $\rhohat$  are exchanged on the right-hand side of  \eqref{censoredpsi}.

\smallskip

It is also worthy of note at this point that the censoring procedure of $X^\dagger$ leading to a pssMp is not specific to the stable case. Indeed, any rssMp can be censored in the same way and will still result in a pssMp. (We leave it as an exercise to verify this fact, however the proof is essentially the same as in the stable setting, see Kyprianou et al. \cite{KPW}) We will see such an example later in this  exposition.

\medskip

\item[\bf (2)]  The radial process of $X$ is nothing more than $|X|$. In general, $|X|$ is not a Markov process as one needs to know the sign of $X$ to determine its increments. However, when $X$ is symmetric, that is to say $\rho = 1/2$, then |X| is Markovian. The same is true of $|X^\dagger|$ since $X = X^\dagger$. Moreover $|X^\dagger|$ is also a pssMp. The latter can be deduced from symmetry and the Lamperti--Kiu transformation \eqref{LK}; see the discussion in Chapter 13 of \cite{Kbook}. 
The associated L\'evy process, $\absxi$, that underlies the Lamperti transform has characteristic exponent given by
\begin{equation}
\Psi^{|\cdot|}(z) =\frac{\Gamma(\frac{1}{2}(-{\rm i}z +1 ))}{\Gamma(-\frac{1}{2}{\rm i}z)}\frac{\Gamma(\frac{1}{2}({\rm i}z +1))}{\Gamma(\frac{1}{2}{\rm i}z)}, \qquad z\in\mathbb{R},
\label{a}
\end{equation}
up to a multiplicative constant. 
See Caballero et al. \cite{CPP} for further details.

\medskip

\item[\bf (3)]  The stable process conditioned to stay positive is only of interest for our purposes when $X$ does not have monotone paths. Introduced in Chaumont \cite{C96}, it arises from the limiting procedure (which is indeed valid as a definition for any L\'evy process conditioned to stay positive) \begin{equation}
	\mathbb{P}_x^\uparrow(A): = \lim_{q\downarrow0}\mathbb{P}_x\big(A, \, t<q^{-1} \mathbf{e}\big| X_s
\geq 0,\, s\leq q^{-1}\mathbf{e}\big)
\label{CTSP}
\end{equation}
for $A\in\mathcal{F}_t: = \sigma(X_s, s\leq t)$, where $\mathbf{e}$ is an independent and exponentially distributed random variable with unit rate; see also Chaumont and Doney \cite{CD}. This defines a new family of probabilities on $\mathbb{D}(\mathbb{R}_+,\mathbb{R}_+)$ and the resulting process $(X, \mathbb{P}^\uparrow_x)$, $x>0$, is what we call the stable process conditioned to stay positive.

\smallskip

It turns out that the family $\mathbb{P}^\uparrow_x$, $x>0$, is absolutely continuous with respect to $\mathbb{P}_x$, $x>0$, on $(\mathcal{F}_t, t\geq 0)$ via the $h$-transform relation
\begin{equation}
\left.\frac{{\rm d}\mathbb{P}^\uparrow_x}{{\rm d}\mathbb{P}_x}\right|_{\mathcal{F}_t} = \frac{X_t^{\alpha\hat\rho}}{x^{\alpha\hat\rho}}\mathbf{1}_{(t<\tau^{(-\infty,0)})}, \qquad t\geq 0, x>0,
\label{CSP}
\end{equation}
where $\tau^{(-\infty,0)} = \inf\{t>0: X_t <0\}$. Note that when $X$ is spectrally negative, the $h$-function in the above $h$-transform is precisely the one given in \eqref{h}, that is
\begin{equation}
h(x) =-\Gamma(1-\alpha) \frac{\sin(\pi(\alpha-1))}{\pi}x^{\alpha-1} = \frac{1}{\Gamma(\alpha)}x^{\alpha-1}, \qquad x\geq 0,
\label{speconesidedh}
\end{equation}
on account of the fact that $\rho = 1/\alpha$ for spectrally negative stable processes.

\smallskip

Similarly to the conditioned process from the previous section, stable processes conditioned to be positive are self-similar. The L\'evy process $\xi^\uparrow$ that underpins the Lamperti transform was computed in Cabellero and Chaumont \cite{CC}, see also Section 13.4.2 of Kyprianou \cite{Kbook}), and takes the form
\begin{equation}
 \Psi^\uparrow(z)=\frac{\Gamma(\alpha\rho -{\rm i} {z})}{\Gamma(-{\rm i}{z})}
 \frac{\Gamma(1+{\rm i}{z} + \alpha\rhohat)}{\Gamma(1+{\rm i}{z}) },
 \qquad z\in\R.
 \label{Psiuparrow}
\end{equation}
They also proved that $\xi^\uparrow$ drifts to $+\infty$ so that according to \eqref{999}, $0$ is polar for the stable processes conditioned to be positive.\smallskip

\item[{\bf (4)}] Finally, we consider the setting of $\alpha\in(0,1)$ and that $X$ has monotone paths. Conditioning ascending (resp. descending) stable subordinator to stay  positive (resp. negative) is an uninteresting concept. However, what is more interesting is to consider an ascending (resp. descending) stable subordinator to approach the origin continuously from  below (resp. above). 

\smallskip

This was treated by Chaumont \cite{C96} and Kyprianou et al. \cite{KRSe}, where it was shown that for all $x>b >0$,
    \[
      \P^{\circ}_x(A, t< \tau^-_b): = \lim_{\varepsilon\downarrow0}\P_x(A, t< \tau^-_b \,|\, X_{\tau^-_0-}\leq \varepsilon), \qquad t\geq 0, A\in\mathcal{F}_t,
    \]
    is well-defined such that, for $x>0$,
  \begin{equation}
   \left. \frac{{\rm d}\P^{\circ}_x}{{\rm d}\P_x}\right|_{\mathcal F_t} = \frac{X_t^{\alpha-1}}{x^{\alpha-1}}\1_{\{X_t \geq 0\}}.
    \label{subCOM}
  \end{equation}
In the Lamperti representation of  $(X^\circ, \mathbb{P}_x)$, $x\geq 0$, it was also shown by \cite{KRSe} that  $\xi$ is the negative of a subordinator so that its  Laplace exponent is given by 
\[
-\frac{1}{t}\log\mathbf{E}_x[e^{\lambda \xi_t}]=\frac{\Gamma(\alpha+\lambda )}{\Gamma(\lambda)}, \qquad \lambda \geq 0.
\]
\end{itemize}

\smallskip
Similarly to the discussion on conditioned stable processes in Section \ref{sec:stablerssMp} we may ask whether 0 is an entrance point for the process conditioned to stay positive in the sense that there is a Feller extension  $\P_x^\uparrow$, $x> 0$ allowing the meaningful inclusion of $\mathbb{P}^\uparrow_0$.
\begin{lemma}\label{zeroenter2} 
	If $ \alpha\in(1,2)$, then $0$ is an entrance point for $\P_x^\uparrow$, $x>0$.
\end{lemma}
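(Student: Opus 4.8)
The plan is to recognise $(X,\mathbb{P}^\uparrow_x)$, $x>0$, as a positive self-similar Markov process and to apply the entrance criterion \eqref{C2} to its underlying Lamperti L\'evy process $\xi^\uparrow$, whose characteristic exponent is recorded in \eqref{Psiuparrow}. Since it was noted after \eqref{Psiuparrow} that $\xi^\uparrow$ drifts to $+\infty$, the process is conservative (indeed $0$ is polar by \eqref{999}), so the only thing to establish is that $0$ can be approached continuously, that is, that $\xi^\uparrow$ has convergent overshoots in the sense of \eqref{C2}.

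First I would reduce the whole statement to a single analytic computation. By the renewal theorem, a non-lattice L\'evy process drifting to $+\infty$ whose ascending ladder height process has finite mean automatically has convergent overshoots, the limit being the associated stationary excess law; and, since $\xi^\uparrow$ drifts to $+\infty$, finiteness of the ascending ladder height mean is guaranteed as soon as $\mathbf{E}[\xi^\uparrow_1]\in(0,\infty)$. The process $\xi^\uparrow$ is non-lattice because its L\'evy measure, inherited through the Lamperti transform from the jump structure of the stable process conditioned to stay positive, has a density. Thus the lemma rests entirely on showing that $\xi^\uparrow$ has a finite, strictly positive mean.

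To this end I would expand $\Psi^\uparrow$ near the origin. Writing $\Gamma(-\mathrm{i}z)=\Gamma(1-\mathrm{i}z)/(-\mathrm{i}z)$ in \eqref{Psiuparrow} gives
\begin{align*}
\Psi^\uparrow(z)=-\mathrm{i}z\,\frac{\Gamma(\alpha\rho-\mathrm{i}z)}{\Gamma(1-\mathrm{i}z)}\frac{\Gamma(1+\mathrm{i}z+\alpha\hat\rho)}{\Gamma(1+\mathrm{i}z)},
\end{align*}
so that the apparent pole of $1/\Gamma(-\mathrm{i}z)$ at $z=0$ becomes a simple zero of $\Psi^\uparrow$. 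Evaluating the remaining Gamma factors at $z=0$ yields $\Psi^\uparrow(z)=-\mathrm{i}z\,\Gamma(\alpha\rho)\Gamma(1+\alpha\hat\rho)+o(z)$, whence, with the sign convention fixed after \eqref{censoredpsi}, $\mathbf{E}[\xi^\uparrow_1]=\Gamma(\alpha\rho)\Gamma(1+\alpha\hat\rho)$. For $\alpha\in(1,2)$ the admissible range forces $\alpha\rho\in(0,1]$ and $1+\alpha\hat\rho>0$, so this mean is finite and strictly positive, consistently with $\xi^\uparrow$ drifting to $+\infty$.

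Combining the three steps, $\xi^\uparrow$ is a non-lattice L\'evy process with finite positive mean, hence its ascending ladder height has finite mean and its overshoots converge; by \eqref{C2} the point $0$ is an entrance point for the pssMp $(X,\mathbb{P}^\uparrow_x)$, which is the claim. The hard part is the passage from finiteness of $\mathbf{E}[\xi^\uparrow_1]$ to convergence of overshoots: one must ensure that the ascending ladder height mean is genuinely controlled by the mean of $\xi^\uparrow$ and that the non-lattice hypothesis in \eqref{C2} really holds. This is precisely the mechanism underlying the entrance law for the stable process conditioned to stay positive constructed by Caballero and Chaumont \cite{CC}, whose finite-mean condition we have verified explicitly here; the remaining renewal-theoretic details are classical.
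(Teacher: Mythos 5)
Your proof is correct, but it takes a genuinely different route from the paper. The paper also reduces the lemma to the overshoot condition \eqref{C2} for the Lamperti L\'evy process, but then transfers that condition back to the stable process itself: via the Lamperti transform and the $h$-transform \eqref{CSP} it becomes the existence of the weak limit \eqref{check21} of the first-passage law $\mathbb{P}^\uparrow_x(X_{\tau^{(1,\infty)}}\in dy)$ as $x\downarrow 0$, which is then checked by hand from Rogozin's explicit two-sided exit identities \eqref{1stcase} and \eqref{2ndcase}. You instead stay at the level of $\xi^\uparrow$ and use the classical renewal-theoretic criterion: a non-lattice L\'evy process drifting to $+\infty$ with finite mean has an ascending ladder height of finite mean, hence stationary overshoots. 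Your expansion of \eqref{Psiuparrow} is correct — writing $1/\Gamma(-\mathrm{i}z)=-\mathrm{i}z/\Gamma(1-\mathrm{i}z)$ gives $\Psi^\uparrow(z)=-\mathrm{i}z\,\Gamma(\alpha\rho)\Gamma(1+\alpha\hat\rho)+o(z)$, and since $\alpha\rho\in[\alpha-1,1]\subset(0,1]$ for $\alpha\in(1,2)$ the mean is indeed finite and strictly positive (finiteness is honest here because the Gamma-ratio is analytic in a strip around the real axis, so $\xi^\uparrow_1$ has moments of all orders). This is essentially the ``analytic condition'' route that the paper mentions as an alternative in the proof of Lemma \ref{zeroenter} but does not pursue. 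What each approach buys: the paper's computation produces the limiting entrance law explicitly as a by-product and runs in parallel with the two-sided-jump case of Lemma \ref{zeroenter}, where the MAP analogue of your renewal argument is less standard; your argument is shorter, avoids fluctuation identities entirely, and generalises beyond the stable setting, at the cost of not exhibiting the entrance law. One small point worth flagging: in the spectrally negative case $\xi^\uparrow$ creeps upwards and the limiting overshoot is degenerate at $0$; this is harmless because \eqref{C2} for pssMp only requires existence of the weak limit (non-degeneracy is only demanded in the rssMp condition \eqref{239}), but it deserves a sentence.
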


\begin{proof}
The proof is almost the same as the proof of Lemma \ref{zeroenter}. Analogously to \eqref{othercheck}, we may appeal to \eqref{C2} and the Lamperti transform \eqref{pssMpLamperti} to deduce that a necessary and sufficient condition for 0 to be an entrance point is that the righthand side of
\begin{equation}
\lim_{x\downarrow0}\mathbb{P}^\uparrow_x(X_{\tau^{(1,\infty)}} \in dy)=\lim_{x\downarrow0}\frac{y^{\alpha\rhohat}}{x^{\alpha\rhohat}}\mathbb{P}_x(X_{\tau^{(1,\infty)}}\in dy, \,\tau^{(1,\infty)}< \tau^{(-\infty,0)})
\label{check21}
\end{equation}
exists weakly.
Similarly to \eqref{othercheck}, we can verify this directly by appealing to already known explicit fluctuation identities. In this case, we need the two sided exit problem which was solved by Rogozin \cite{Rog}. For example, under the regime $\alpha\in(1,2)$ when  $0<\alpha\rho<1$ (which includes the case of spectral positivity),
\begin{align}\label{1stcase}
\begin{split}
&\quad\mathbb{P}_x(X_{\tau^{(1,\infty)}} \in d  y ;
 \,\tau^{(1,\infty)}< \tau^{(-\infty,0)})\\
 &= \frac{\sin(\pi\alpha \rho)}{\pi} (1-
x)^{\alpha\rho}x^{\alpha\rhohat}
(y-1)^{-\alpha\rho}y^{-\alpha\rhohat}(y-x)^{-1}d y, \qquad y>1,
\end{split}
\end{align}
and when $\rho = 1/\alpha$ (which is the case of spectral negativity), then necessarily $X_{\tau^{(1,\infty)}} =1$ and 
\begin{align}\label{2ndcase}
\begin{split}
&\quad\mathbb{P}_x(X_{\tau^{(1,\infty)}} =1;
 \,\tau^{(1,\infty)}< \tau^{(-\infty,0)})\\
 &= 1- \frac{\sin(\pi\alpha \rhohat)}{\pi} (1-
x)^{\alpha\rhohat}x^{\alpha\rho}\int_0^\infty
(y-1)^{-\alpha\rhohat}y^{-\alpha\rho}(y-x)^{-1}d y, \qquad y>1.
\end{split}
\end{align}
The limiting computation in \eqref{check21} is now trivial to verify using \eqref{1stcase} and, with a little care, straightforward to verify using \eqref{2ndcase} as well. 
\end{proof}


In a similar spirit to the previous section, we complete this section by providing another remarkable pathwise transformation of the process $X$, connecting it to its conditioned version $\mathbb{P}^\uparrow_x$, $x>0$, but only in the case that $X$ is spectrally positive and $\alpha\in(1,2)$. As before, we write $\hat{\mathbb{P}}_x$, $x\neq0$ for the probabilities of $-X$.

\begin{theorem}[Chaumont]\label{CRBZ} Suppose that $X$ is spectrally positive with $\alpha\in(1,2)$ and define 
\begin{equation}
\eta_t = \inf\left\{s>0 : \int_0^s (X_u^\dagger)^{-2\alpha}{\rm d}u >t\right\},\quad t\leq \int_0^\infty (X_u^\dagger)^{-2\alpha}{\rm d}u.
\label{gamma_t}
\end{equation}
For all $x>0$, the law of $(1/{X}^\dagger_{\eta_t})_{t\geq 0}$ under ${\mathbb{P}}_{x}$ is $\hat{\mathbb{P}}_{1/x}^\uparrow$.

\end{theorem}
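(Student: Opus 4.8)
The plan is to obtain Theorem \ref{CRBZ} as a specialization of the Riesz--Bogdan--\.Zak transform (Theorem \ref{RBZthrm}) to the spectrally positive regime, combined with the identification of the resulting conditioned process. First I would recall that the time-change $\eta_t$ appearing in \eqref{gamma_t} is \emph{identical} to the one in \eqref{firsteta}, since for a spectrally positive $X$ with $\alpha\in(1,2)$ we have $X^\dagger_u \ge 0$ on $\{u<\tau^{\{0\}}\}$, so $|X^\dagger_u|^{-2\alpha}=(X^\dagger_u)^{-2\alpha}$. Thus the pathwise operation ``spatial inversion $x\mapsto 1/x$ composed with the clock $\eta$'' is exactly the map of Theorem \ref{RBZthrm}, and by that theorem the law of $(1/X^\dagger_{\eta_t})_{t\ge 0}$ under $\hat{\mathbb P}_x$ equals $\mathbb P^\circ_{1/x}$.

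The next step is to connect this with the stated conclusion, which is phrased in terms of $\mathbb P_x$ (not $\hat{\mathbb P}_x$) and $\hat{\mathbb P}^\uparrow_{1/x}$. Here is where spectral positivity does its work. The dual process $-X$ under $\hat{\mathbb P}_x$ is a spectrally \emph{negative} stable process, and I would use the observation recorded in \eqref{speconesidedh}: for spectrally negative $X$ one has $\rho=1/\alpha$, $\alpha\hat\rho=\alpha-1$, and the conditioning-to-avoid-the-origin $h$-function \eqref{h} coincides (up to the constant $1/\Gamma(\alpha)$) with the conditioning-to-stay-positive $h$-function in \eqref{CSP}. Concretely, for a spectrally negative stable process, avoiding the origin and staying positive are the \emph{same} $h$-transform, because such a process creeps downward and hits $0$ continuously, so $\{0\}$ is reached from above without an overshoot and the event of avoiding $0$ forever coincides (on the relevant filtration) with the event of staying positive. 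Therefore applying the identity of Theorem \ref{RBZthrm} to the dual and translating via $X\leftrightarrow -X$ converts $\mathbb P^\circ$ for the spectrally negative dual into $\mathbb P^\uparrow$; carrying the hat back through gives precisely that the law of $(1/X^\dagger_{\eta_t})_{t\ge0}$ under $\mathbb P_x$ is $\hat{\mathbb P}^\uparrow_{1/x}$.

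In detail I would argue as follows. Write $Y=-X$, so that under $\mathbb P_x$ the process $Y$ has law $\hat{\mathbb P}_{-x}$ and is spectrally negative. Applying Theorem \ref{RBZthrm} to $Y$ (whose own dual $-Y=X$ is spectrally positive, hence two-sided in the sense required) identifies the law of $(1/Y^\dagger_{\eta^Y_t})$ under the law of $Y$ as the ``conditioned to avoid the origin'' law of the dual of $Y$, namely of $X$. Using $1/Y^\dagger = -1/X^\dagger$ and the fact that the clock built from $|Y^\dagger_u|^{-2\alpha}=|X^\dagger_u|^{-2\alpha}=(X^\dagger_u)^{-2\alpha}$ is the same $\eta$, one sees the inverted-and-time-changed path of $X^\dagger$ under $\mathbb P_x$ has the law of $X$ conditioned to avoid $0$, which by the spectrally-negative coincidence above is $X$ conditioned to stay positive; taking the final hat (i.e. reading everything through $\hat{\mathbb P}$) delivers $\hat{\mathbb P}^\uparrow_{1/x}$. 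The main obstacle I anticipate is the careful bookkeeping of the duality and the two sign-reversals: one must verify that the conditioned-to-avoid-origin law \eqref{avoid} and the conditioned-to-stay-positive law \eqref{CTSP} genuinely agree in the spectrally negative case (rather than merely sharing an $h$-function), which requires checking that the supports and the killing/absorption structures match. This is precisely the content one reads off from \eqref{speconesidedh} together with the creeping behaviour of spectrally negative stable processes, and I would verify the equality of the two $h$-transformed semigroups by comparing their Radon--Nikodym densities on $\mathcal F_t$ on the event $\{t<\tau^{(-\infty,0)}\}$, noting that avoidance of $0$ for the spectrally negative process forces staying on one side.
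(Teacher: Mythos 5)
Your proposal founders on its first load-bearing step. Theorem \ref{RBZthrm} is stated (and proved in the literature it is drawn from) only for stable processes with \emph{two-sided} jumps, whereas the process in Theorem \ref{CRBZ} is spectrally positive, i.e.\ has jumps in one direction only. Your parenthetical justification --- that $-Y=X$ is ``spectrally positive, hence two-sided in the sense required'' --- contradicts the paper's own terminology, under which ``two-sided'' means jumps in both directions and is explicitly the case complementary to spectral one-sidedness. The conditioned law $\mathbb{P}^\circ$ in \eqref{attract} and \eqref{avoid} and the $h$-function \eqref{h} are likewise introduced only in the two-sided setting; indeed, for a spectrally positive process one has $\alpha\rhohat=1$, so the first term of \eqref{h} vanishes identically on $[0,\infty)$ and the $h$-transform \eqref{updownCOM} is not even well defined from points $x>0$. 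The spectrally one-sided analogue of the Riesz--Bogdan--\.Zak transform \emph{is} Theorem \ref{CRBZ}; invoking the two-sided version to prove it is therefore circular in substance, not merely a citation outside its hypotheses.

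Your secondary observations are sound but do not repair this. It is true that the clock $\eta$ in \eqref{gamma_t} coincides with the one in \eqref{firsteta}, and true that for the spectrally negative dual the $h$-function \eqref{h} degenerates to \eqref{speconesidedh} and matches the conditioning to stay positive in \eqref{CSP} --- this is exactly why the conclusion of Theorem \ref{CRBZ} features $\hat{\mathbb{P}}^\uparrow$ rather than some $\hat{\mathbb{P}}^\circ$. But that identification is a consistency check on the answer, not a derivation of it. The paper's proof avoids the two-sided machinery altogether: it invokes Chaumont's general inversion theorem for positive self-similar Markov processes (Theorem 2.4.1 of \cite{Loicnotes}), which says that spatial inversion composed with the endogenous time-change \eqref{gamma_t} corresponds to negating the underlying L\'evy process in the Lamperti transform, and then verifies by direct comparison of the characteristic exponents \eqref{xipsi} and \eqref{xiuparrowpsi} that the L\'evy process underlying $X^\dagger$ is the negative of the one underlying $(X,\hat{\mathbb{P}}^\uparrow_x)$, $x>0$. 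To keep your route you would first have to establish a one-sided Riesz--Bogdan--\.Zak transform, which amounts to reproving the theorem you are trying to cite your way out of.
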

\begin{proof}Strictly speaking, 
this result is a special case of Theorem 2.4.1 in Chaumont \cite{Loicnotes}, which demonstrates a more general result of this kind for pssMp. Indeed, suppose $X$ is a pssMp with associated L\'evy process $\xi$ via the Lamperti transform, and, in the same respect,  $\hat{X}$ is the pssMp associated to the L\'evy process $-\xi$. Then Theorem 2.4.1 of \cite{Loicnotes} states that $\hat{X}$, when issued from $y>0$ is equal in law to $(1/{X}_{\gamma_t}, t\geq 0)$ when issued from $1/y$, where the endogenous time-change $\gamma_t$  is structured as in \eqref{gamma_t}.
\smallskip

 The special case we are concerned with here  makes use of  the observation from Caballero and Chaumont \cite{CC} (see also Section 13.4.2 of Kyprianou \cite{Kbook}) that, if $X^\dagger$ is the spectrally positive stable process killed on hitting the origin with $\alpha\in(1,2)$, then its L\'evy process, say $\xi^\dagger$, underlying the Lamperti transform has characteristic exponent satisfying 
 \begin{equation}
\Psi^\dagger(z)  
 =  {\rm i}z \frac{\Gamma(\alpha-{\rm i}{z})}{\Gamma(1-{\rm i}{z})}
 ,\qquad {z}\in\mathbb{R},
\label{xipsi}
 \end{equation}
 up to a multiplicative constant.
One easily computes from this exponent that the mean of this L\'evy process at time 1 is equal to  $
 -{\rm i} \Psi^{\dagger\prime}(0) =  
 -\Gamma(\alpha)$, which is strictly negative, accounting for the almost sure hitting of the origin by $X$  as one would expect.
 On the other hand,  from \eqref{Psiuparrow}
  the L\'evy process underlying $(X, \hat{\mathbb{P}}^\uparrow_x)$, $x>0$, via the Lamperti transform, say $\hat{\xi}^\uparrow$
  takes the form 
 \begin{equation}
 \hat{\Psi}^\uparrow(z)= {\rm i}z\frac{\Gamma(\alpha+{\rm i}{z})}{\Gamma(1+{\rm i}{z})},\qquad {z}\in\mathbb{R},
   \label{xiuparrowpsi}
 \end{equation}
 up to a multiplicative constant.
Recall  that spectral positivity of $(X, \mathbb{P}_x)$, $x\in\mathbb{R}$,   means that $\rhohat = 1/\alpha$ and hence in the context of deriving \eqref{xiuparrowpsi}, where $\hat X$ is used, we have $\rho = 1/\alpha$. Note now that  
 $-{\rm i} \Psi^{\uparrow\prime}(0)=\Gamma(\alpha)$, which is  strictly positive and accounts for the fact that $(X, \hat{\mathbb{P}}^\uparrow_x)$ is transient to $+\infty$.
 The statement of Theorem \ref{CRBZ} now follows by comparing these two exponents and recalling that the law of a L\'evy process is entirely determined by its characteristic exponent and, moreover,  that the law of a pssMp is entirely determined by its underlying L\'evy process (via the Lamperti transform).
\end{proof}

\section{Fundamental transformations}\label{sec_transforms}
	In this section we consider combinations of classical transformations (change of measure, change of space, random change of time) related to the SDE \eqref{2}, resp. the time-change representation \eqref{timechangesolution}. These will be crucial in the main part of the proof to apply results for stable L\'evy processes and self-similar Markov processes.

\subsection{Time-space inversions}
Before we state and prove an extension of the Riesz--Bogdan--\.Zak transformation (Theorem \ref{RBZthrm} above) we recall a simple lemma on time-changes 
which is essentially a re-wording of Theorem 1.1 and the discussion above in Chapter 6 of Ethier and Kurtz  \cite{EthKur}, see also Proposition 3.5 of K\"uhner and Schnurr \cite{KSc}.
\begin{lemma}\label{lemm}
Suppose $(Y_t , t\geq 0)$ is a c\`adl\`ag trajectory, $f\geq 0$ continuous and
	\begin{align*}
		t_0=\inf\big\{t\geq 0: f(Y_t)=0\big\}\quad \text{and}\quad t_1=\inf\left\{t\geq 0: \int_0^t \frac{1}{f(Y_u)}\,du=\infty\right\}.
	\end{align*}
	If $t_0=t_1$, then the integral equation $\upsilon_t=\int_0^t f(Y_{\upsilon_s})\,ds$ has a unique solution which is of the form
	\begin{align*}
		\upsilon_t=\inf\left\{s\ge 0: \int_0^s \frac{1}{f(Y_u)}\,du>t\right\}\wedge t_0,\quad t\geq 0.
	\end{align*}
\end{lemma}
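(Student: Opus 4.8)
The plan is to prove this by recognizing it as a deterministic statement about a fixed c\`adl\`ag path, so no probability is involved. Fix a trajectory $(Y_t,t\geq 0)$ and consider the continuous increasing function $A(s):=\int_0^s f(Y_u)^{-1}\,du$ on $[0,t_1)$, which is the analogue of the additive functional appearing in the Lamperti transform \eqref{pssMpLamperti}. I would first record the elementary fact that $A$ is strictly increasing (since $f>0$ before $t_0$) and continuous, hence admits a continuous strictly increasing inverse $A^{-1}$ on $[0,A(t_1-))$. The candidate solution is $\upsilon_t=\inf\{s\geq 0:A(s)>t\}\wedge t_0$, which is precisely $A^{-1}(t)$ capped at $t_0$, and my first task is simply to verify that this candidate satisfies the integral equation $\upsilon_t=\int_0^t f(Y_{\upsilon_s})\,ds$.

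For the verification I would split into two regimes according to whether $t$ is small enough that $\upsilon_t<t_0$ or large enough that the cap is active. On $\{t:\upsilon_t<t_0\}$ we have $\upsilon_t=A^{-1}(t)$, so $A(\upsilon_t)=t$; differentiating the identity $\int_0^{\upsilon_t} f(Y_u)^{-1}\,du=t$ formally gives $\upsilon_t'\cdot f(Y_{\upsilon_t})^{-1}=1$, i.e. $\upsilon_t'=f(Y_{\upsilon_t})$, and integrating from $0$ (where $\upsilon_0=0$) yields the integral equation. To make this rigorous without assuming differentiability I would instead substitute $s=A(r)$ into $\int_0^t f(Y_{\upsilon_s})\,ds$ via the change-of-variables formula for the continuous increasing function $A$, using $\upsilon_{A(r)}=r$, which converts the integral back to $\int_0^{\upsilon_t} f(Y_r)\cdot dA(r)=\int_0^{\upsilon_t} f(Y_r)f(Y_r)^{-1}\,dr=\upsilon_t$. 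Once $t$ reaches the value $A(t_1-)$ the process $\upsilon$ is pinned at $t_0=t_1$ by hypothesis; here the assumption $t_0=t_1$ is exactly what guarantees consistency, since it forces $A(s)\to\infty$ as $s\uparrow t_0$, so the infimum defining $\upsilon_t$ never exceeds $t_0$ and the cap is in fact redundant, keeping $f(Y_{\upsilon_s})$ well-defined and the integrand finite.

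For uniqueness I would argue that any solution $\widetilde\upsilon$ is continuous and nondecreasing (being an integral of a nonnegative integrand), with $\widetilde\upsilon_0=0$, and satisfies $\widetilde\upsilon_t\leq t_0$ for all $t$ because the integrand $f(Y_{\widetilde\upsilon_s})$ would otherwise have to stay positive past $t_0$, contradicting $f(Y_{t_0})=0$ together with the blow-up encoded by $t_1=t_0$. On the region where $\widetilde\upsilon_t<t_0$ I would apply $A$ to both sides and use the same substitution to get $A(\widetilde\upsilon_t)=t$, forcing $\widetilde\upsilon_t=A^{-1}(t)$ and hence agreement with the candidate.

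The main obstacle I anticipate is the careful handling of the boundary behaviour at $t_0=t_1$, in particular justifying that the hypothesis $t_0=t_1$ precisely rules out the pathological case in which $f(Y_u)$ vanishes (making $A$ finite at $t_0$) while the integral $\int_0^{t_0}f(Y_u)^{-1}\,du$ remains finite, which would allow the solution to cross $t_0$ and lose uniqueness. Making the change-of-variables rigorous for a merely continuous (not absolutely continuous) time-change, and confirming that the capped infimum agrees with $A^{-1}$ on the whole relevant interval rather than just formally, is where I would spend the bulk of the care; the rest is routine manipulation of monotone continuous inverses.
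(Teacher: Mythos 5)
The paper does not actually prove this lemma; it is imported wholesale from the literature (Theorem 1.1 and the surrounding discussion in Chapter 6 of Ethier and Kurtz, with Proposition 3.5 of K\"uhner and Schnurr as an alternative reference), so your self-contained argument is necessarily a different route --- though in substance it is the standard proof of the cited result. Your core mechanism is sound: $A(s)=\int_0^s f(Y_u)^{-1}\,du$ is finite, continuous and strictly increasing on $[0,t_1)$, the candidate is $A^{-1}$ capped at $t_0$, the verification is the Lebesgue--Stieltjes substitution $s=A(r)$, and uniqueness follows by applying $A$ to any solution on $\{t:\tilde\upsilon_t<t_0\}$ and then invoking divergence of $\int f(Y_u)^{-1}\,du$ at $t_0$ to forbid escape. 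One claim in the middle is wrong, however, and worth repairing: $t_0=t_1$ does \emph{not} force $A(s)\to\infty$ as $s\uparrow t_0$. Take $f(Y_u)\equiv 1$ for $u<1$ and $f(Y_u)\equiv 0$ for $u\geq 1$; then $t_0=t_1=1$ but $A(1-)=1<\infty$, and the pinned regime $t\geq A(t_1-)$ is genuinely nonempty (so your sentence is self-contradictory as written: if $A(s)\to\infty$ there would be no pinning to discuss). The cap is redundant for a different reason: $A(s)=\infty$ for every $s>t_1$, so $\inf\{s\geq 0:A(s)>t\}\leq t_1=t_0$ automatically. Likewise, the non-escape half of uniqueness should be phrased as follows: if $A(t_0-)<\infty$ (the only case in which a solution reaches $t_0$ in finite time), then $t_1=t_0$ forces $\int_{t_0}^{t_0+\varepsilon}f(Y_u)^{-1}\,du=\infty$ for every $\varepsilon>0$, and the usual Osgood-type computation, $t''-t'\geq\int_{t'}^{t''}f(Y_{\tilde\upsilon_s})^{-1}\tilde\upsilon_s'\,ds=\int_{\tilde\upsilon_{t'}}^{\tilde\upsilon_{t''}}f(Y_u)^{-1}\,du$, yields a contradiction if $\tilde\upsilon$ ever exceeds $t_0$. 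In the pinned regime one also needs $f(Y_{t_0})=0$, which follows from right-continuity of $Y$ and continuity of $f$. With those adjustments your argument is complete and correct.
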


In what follows we set
\[
\beta(x) = \sigma(1/x)^{-\alpha}|x|^{-2\alpha}, \qquad x\in\mathbb{R}\backslash\{0\}
\]
and prove an extension of the Riesz--Bogdan--\.Zak Theorem.
\begin{prop} \label{prop} 
Assume the stable process $X$ with distribution $\mathbb{P}_x$, $x\in\mathbb{R}$, has two-sided jumps and $\sigma>0$ is continuous.

\smallskip

(i) Define the time-space transformation
\begin{align}	\label{RBZ}
	{Z}^\dagger_t=\frac{1}{\hat{X}^\circ_{\theta_t}}, \qquad t< \int_0^\infty \beta(\hat{X}^\circ_u){\,d u},
\end{align}
where 
\[
\theta_t =\inf\left\{s> 0 : \int_0^s\beta(\hat{X}^\circ_u){\,d u}>t\right\}.
\]
If $\hat{X}^\circ$ has law $\hat{\mathbb{P}}^\circ_{1/x}$, $x\neq 0$, then $Z^\dagger$ is the time-changed process \eqref{timechangesolution} under $\mathbb{P}_x$ killed at the origin.

\smallskip

(ii) Define the time-space transformation 
	\begin{align}
		X^\circ_t=\frac{1}{\hat{Z}^\dagger_{\vartheta_t}}, \quad t\geq 0,
		\label{XfromZ}
  	\end{align}	
	where 
	\[
	\vartheta_t = \inf\left\{s>0 : \int_0^s\frac{1}{\beta(1/\hat{Z}^\dagger_u)}{\,d u} >t\right\}.
	\]
	If $\hat{Z}^\dagger$ is the time-changed process from \eqref{timechangesolution} under $\hat{\mathbb{P}}_x$, $x\neq 0$, killed at the origin, then the law of $X^\circ$ is $\mathbb{P}^\circ_{1/x}$.
\end{prop}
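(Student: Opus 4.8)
The plan is to derive both statements from the Riesz--Bogdan--\.Zak transform (Theorem \ref{RBZthrm}) combined with the time-change representation (Proposition \ref{pr}) and the composition lemma for time-changes (Lemma \ref{lemm}). The underlying principle is that both $Z^\dagger$ and $X^\circ$ are obtained from the killed stable process $X^\dagger$ (respectively its dual) by the same two operations --- spatial inversion $x\mapsto 1/x$ and an endogenous time-change --- and that two time-changes applied in succession to a single path compose into one, governed by the chain rule for the associated additive functionals. Throughout I work with $X^\dagger$ rather than $X$ so that everything lives in the self-similar (transient-after-killing) regime where Theorem \ref{RBZthrm} applies.

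\emph{Part (i).} First I would recall from Proposition \ref{pr} that, under $\mathbb{P}_x$, the killed solution $Z^\dagger$ is the time-change $X^\dagger_{\tau_t}$ with $\tau_t=\inf\{s>0:\int_0^s\sigma(X^\dagger_u)^{-\alpha}\,du>t\}$. On the other hand, Theorem \ref{RBZthrm} tells us that $(1/X^\dagger_{\eta_t})_{t\geq 0}$ under $\hat{\mathbb{P}}_x$ has law $\mathbb{P}^\circ_{1/x}$, where $\eta$ is driven by the clock $|X^\dagger_u|^{-2\alpha}$. Reading this the other way round, $X^\dagger$ under $\mathbb{P}_x$ equals in law the spatial inversion of a suitably time-changed copy of $\hat X^\circ$ issued from $1/x$. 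The task is then to show that composing this inversion-plus-clock with the further time-change $\tau$ that produces $Z^\dagger$ yields exactly the single clock $\theta$ built from $\beta(x)=\sigma(1/x)^{-\alpha}|x|^{-2\alpha}$. This is a direct substitution: under the inversion $X^\dagger_u=1/\hat X^\circ_{(\cdot)}$, the integrand $\sigma(X^\dagger)^{-\alpha}$ becomes $\sigma(1/\hat X^\circ)^{-\alpha}$, and the Jacobian of the Riesz--Bogdan--\.Zak clock contributes the factor $|\hat X^\circ|^{-2\alpha}$, so that the two successive speed measures multiply to give precisely $\beta(\hat X^\circ_u)$. Lemma \ref{lemm} then guarantees that the composed clock is the explicit inverse functional defining $\theta_t$, and that the horizon is $\int_0^\infty\beta(\hat X^\circ_u)\,du$, which matches the claimed explosion/killing time.

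\emph{Part (ii)} is the formal inverse of Part (i), and I would prove it by checking that the transformation in \eqref{XfromZ} is the inverse of the one in \eqref{RBZ}. Here $\hat Z^\dagger$ is the time-changed solution under $\hat{\mathbb{P}}_x$, which by Part (i) (applied to the dual process, exchanging the roles of $\rho$ and $\hat\rho$) is the spatial inversion of a time-changed conditioned process; inverting the map $Z^\dagger=1/\hat X^\circ_{\theta}$ recovers $\hat X^\circ$, and the reciprocal clock $1/\beta(1/\hat Z^\dagger_u)$ in the definition of $\vartheta$ is exactly the functional that undoes $\theta$. Again the key computation is that the two clocks are mutually inverse --- this is where Lemma \ref{lemm} is used a second time, with the roles of $t_0$ and $t_1$ swapped --- after which the law of $X^\circ$ is identified as $\mathbb{P}^\circ_{1/x}$ by reading Theorem \ref{RBZthrm} in the forward direction.

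The main obstacle I anticipate is bookkeeping at the boundary of the time horizons: I must verify that the hypothesis $t_0=t_1$ of Lemma \ref{lemm} holds in each application, i.e. that the composed clock does not run to infinity before the path first reaches the cemetery state $0$ (equivalently, that the perpetual integral $\int_0^\infty\beta(\hat X^\circ_u)\,du$ and the killing/absorption time of $\hat X^\circ$ coincide in the relevant sense). This requires using continuity and strict positivity of $\sigma$ (Assumption \ref{A}) to control $\beta$ away from the singular points $0$ and $\infty$, together with the known path behaviour of the conditioned process $\hat X^\circ$ (it is absorbed continuously at, or conditioned to avoid, the origin, depending on $\alpha$). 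The remaining steps --- the substitution in the integrals and the identification of laws via Theorem \ref{RBZthrm} --- are essentially algebraic once the clocks are shown to match.
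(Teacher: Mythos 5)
Your proposal is correct and follows essentially the same route as the paper: apply the Riesz--Bogdan--\.Zak transform, compose the RBZ clock with the SDE clock via the chain rule so that the speed measures multiply to give $\beta$, and invoke Lemma \ref{lemm} to identify the composed clock uniquely (with $t_0=t_1=\infty$ in part (i) and $t_0=t_1=\tau^{\{0\}}$ in part (ii)). The paper handles part (ii) by directly showing $\hat\tau\circ\vartheta=\hat\eta$ rather than by formally inverting part (i), but this is the same computation in a different order.
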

Keeping in mind the time-change from \eqref{timechangesolution} gives the unique (possibly exploding) weak solution to the SDE \eqref{2}, the proposition tells us how to transform solutions to the SDE via spatial inversion and time-change into an $h$-transform of the driving stable process, and vice versa. Later on, this will be applied as follows: in order to understand solutions started at infinity, one can equivalently understand the $h$-process started from zero in combination with the behavior of the time-change. Since the $h$-process is a self-similar Markov process, the behavior at zero has been understood in recent years, so it all will boil down to understanding the time-change.


\begin{proof}[Proof of Proposition \ref{prop}]
%
%
%
%
%
%
(i) The Riesz--Bogdan--\.Zak Theorem (Theorem \ref{RBZthrm}) states that under $\P_x$ the transformation $\hat{X}^\circ_t=1/X^\dagger_{\eta_t}$, $t\geq 0$, has law $\hat{\mathbb{P}}_{1/x}^\circ$ with the time-change $\eta_t = \inf\left\{s>0 : \int_0^s |X^\dagger_u|^{-2\alpha}{\rm d}u >t\right\}$. Next, according to the statement, we time-change $1/\hat{X}^\circ_t=X^\dagger_{\eta_t}$ with $\theta$ and show that $1/\hat{X}^\circ_{\theta_t}=X^\dagger_{\eta\circ\theta_t}$ satisfies the time-change relation \eqref{timechangesolution}. To do so, let us consider the concatenation $\eta\circ \theta$ written in terms of $X^\dagger$. Using the chain rule gives 
\begin{align*}
	\frac{d \eta_t}{dt}  = |X^\dagger_{\eta_t}|^{2\alpha}\quad \text{and}\quad	\frac{d \theta_t}{dt} =1/\beta(1/X^\dagger_{\eta\circ{\theta_t}})=|X^\dagger_{\eta\circ {\theta_t}}|^{-2\alpha}\sigma(X^\dagger_{\eta\circ \theta_t})^{\alpha},
\end{align*}
and hence,
\begin{align}\label{5}
	\frac{d\eta\circ\theta_t}{dt} = \left.\frac{d\eta_s}{ds }\right|_{s = \theta_t} \frac{d\theta_t}{dt}=\sigma(X^\dagger_{\eta\circ\theta_t})^{\alpha}.
\end{align}
Defining $\gamma_t=\eta\circ \theta_t$, we note that $\gamma$ satisfies the pathwise equation 
\begin{equation}
\gamma_t=\int_0^t\sigma(X^\dagger_{\gamma_u})^\alpha\,du.
\label{gamma}
\end{equation}
  Applying Lemma \ref{lemm} to $X^\dagger$ with $\gamma$ playing the role of $\upsilon$ and $f (x)= \sigma(x)^{\alpha}$, we see that, trivially, $t_0 = t_1 = \infty$ and \eqref{gamma} has a unique solution almost surely given by 
  \begin{align*}
  	\gamma_t = \inf\left\{s>0 : \int_0^s\sigma(X^\dagger_{u})^{-\alpha} du>t\right\},\quad t\geq 0.
\end{align*} Plugging-in we see that $1/\hat{X}^\circ_{\theta_t}=X^\dagger_{\eta\circ \theta_t}=X^\dagger_{\gamma_t}$ for $t\geq 0$ and the righthand side obviously satisfies the claim.

\smallskip
  
(ii) By assumption $\hat Z=\hat X^\dagger_{\hat \tau}$, where the killed dual process $\hat X^\dagger=-X^\dagger$ has probabilities $\hat \P_x$, $x\in\mathbb{R}$, and $\hat \tau  = \inf\{s>0 : \int_0^s \sigma(\hat{X}^\dagger_u)^{-\alpha} \, d u>t\}$, $t\geq 0$.  Now note that,  $X_t^\circ=1/\hat Z^\dagger_{\vartheta_t}=1/ \hat{X}^\dagger_{\hat\tau\circ \vartheta_t}$. If we can show that, almost surely under $\hat \P_x$, $\hat\tau\circ \vartheta=\hat\eta$, where 
$\hat\eta_t = \inf\{s>0 : \int_0^s |\hat X^\dagger_u|^{-2\alpha}\, du >t\}$, then the proof is complete due to the Riesz--Bogdan--\.Zak theorem. 
To this end, as in part (i), we get from the chain rule
\begin{align}\label{5b}
	\frac{d\hat\tau\circ\vartheta_t}{dt} = \left.\frac{d\hat\tau_s}{ds }\right|_{s = \vartheta_t} \frac{d\vartheta_t}{dt}=\sigma(X^\dagger_{\tau\circ\vartheta_t})^\alpha\beta(1/X^\dagger_{\tau\circ\vartheta_t})=|X^\dagger_{\tau\circ \vartheta_t}|^{2\alpha},
\end{align}
which implies that $\hat\tau\circ\vartheta$ solves $\hat \tau\circ \vartheta_t=\int_0^t|X^\dagger_{\hat\tau\circ\vartheta_u}|^{2\alpha}\,du$. This is the same equation that $\hat\eta$ satisfies. 
Our proof is complete as soon as we show that the equation that both $\hat\tau\circ\vartheta$ and $\hat\eta$ solve has an  almost surely unique solution. 
We do this by 
applying  Lemma \ref{lemm} again but this time to $\hat{X}$ with $\hat\tau\circ\vartheta$ playing the role of $\tau$ and with  $f(x) = |x|^{2\alpha}$. 
The conditions of the Lemma are straightforward to verify, noting  in particular from the Riesz--Bogdan--\.Zak transform that $t_0 = t_1= \tau^{\{0\}}$.
\end{proof}

In a similar way we can apply Chaumont's transformation for spectrally one-sided processes, cf. Theorem \ref{CRBZ}, to obtain the theorem below. On account of its similarity to the one above, we omit the proof. 

\begin{prop}\label{BZuparrow}
Suppose that  $X$ is a spectrally positive stable process with distribution $\mathbb{P}_x$, $x\in\mathbb{R}$ and assume that $\sigma>0$.

\smallskip

(i) Define the time-space transformation
\begin{align}
	{Z}^\dagger_t=\frac{1}{\hat{X}^\uparrow_{\theta_t}}, \qquad t< \int_0^\infty \beta(\hat{X}^\uparrow_u){\,d u},
	\label{RBZ}
\end{align}
where 
\[
\theta_t =\inf\left\{s> 0 : \int_0^s\beta(\hat{X}^\uparrow_u){\,d u}>t\right\}.
\]
If $\hat{X}^\uparrow$ has probabilities $\hat{\mathbb{P}}^\uparrow_{1/x}$, $x> 0$, then $Z^\dagger$ is the time-changed process \eqref{timechangesolution} under $\mathbb{P}_x$ killed at the origin.

\smallskip

(ii) Define the time-space transformation 
	\begin{align}
		X^\uparrow_t=\frac{1}{\hat{Z}^\dagger_{\vartheta_t}}, \qquad t\geq 0,
		\label{XfromZ}
  	\end{align}	
	where 
	\[
	\vartheta_t = \inf\left\{s>0 : \int_0^s\frac{1}{\beta(1/\hat{Z}^\dagger_u)}{\,d u} >t\right\}.
	\]
	If $\hat{Z}^\dagger$ is the time-changed process from \eqref{timechangesolution} under $\hat{\mathbb{P}}_x$, $x> 0$, killed at the origin, then the law of $X^\uparrow$ is $\mathbb{P}^\uparrow_{1/x}$.
\end{prop}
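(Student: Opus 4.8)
The plan is to replay the proof of Proposition \ref{prop} almost verbatim, replacing the Riesz--Bogdan--\.Zak transform (Theorem \ref{RBZthrm}) by Chaumont's transform (Theorem \ref{CRBZ}) and the process conditioned to avoid the origin by the process conditioned to stay positive. In both parts the entire content beyond bookkeeping is a single application of the time-change Lemma \ref{lemm}, preceded by a chain-rule computation identical to \eqref{5} and \eqref{5b}; only the final matching of laws invokes the pathwise transform, and it is there that the spectrally one-sided setting forces a change.

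For part (i) I would take as input Theorem \ref{CRBZ}: under $\mathbb{P}_x$, $x>0$, the process $\hat{X}^\uparrow_t:=1/X^\dagger_{\eta_t}$ with $\eta_t=\inf\{s>0:\int_0^s(X^\dagger_u)^{-2\alpha}\,du>t\}$ as in \eqref{gamma_t} has law $\hat{\mathbb{P}}^\uparrow_{1/x}$; note that a spectrally positive process creeps down to the origin, so $X^\dagger>0$ on its lifetime and $(X^\dagger)^{-2\alpha}=|X^\dagger|^{-2\alpha}$. Time-changing $1/\hat{X}^\uparrow_t=X^\dagger_{\eta_t}$ by $\theta$ and differentiating exactly as in \eqref{5} yields $\tfrac{d}{dt}(\eta\circ\theta)_t=\sigma(X^\dagger_{\eta\circ\theta_t})^{\alpha}$, so $\gamma:=\eta\circ\theta$ satisfies the pathwise equation \eqref{gamma}. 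Lemma \ref{lemm} applied to $X^\dagger$ with $f=\sigma^{\alpha}>0$ (whence $t_0=t_1=\infty$) identifies $\gamma$ uniquely and gives $Z^\dagger_t=1/\hat{X}^\uparrow_{\theta_t}=X^\dagger_{\gamma_t}$, which is \eqref{timechangesolution}.

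For part (ii) I would mirror Proposition \ref{prop}(ii). Writing $\hat{Z}^\dagger=\hat{X}^\dagger_{\hat\tau}$ with $\hat{X}^\dagger=-X^\dagger$ and $\hat\tau_t=\inf\{s>0:\int_0^s\sigma(\hat{X}^\dagger_u)^{-\alpha}\,du>t\}$, one has $X^\uparrow_t=1/\hat{X}^\dagger_{\hat\tau\circ\vartheta_t}$, and the chain rule of \eqref{5b} shows that $\hat\tau\circ\vartheta$ solves $\upsilon_t=\int_0^t|\hat{X}^\dagger_{\upsilon_u}|^{2\alpha}\,du$, the same equation solved by $\hat\eta_t=\inf\{s>0:\int_0^s|\hat{X}^\dagger_u|^{-2\alpha}\,du>t\}$. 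Lemma \ref{lemm} with $f=|\cdot|^{2\alpha}$ (so that $t_0=t_1=\tau^{\{0\}}$) forces $\hat\tau\circ\vartheta=\hat\eta$, hence $X^\uparrow=1/\hat{X}^\dagger_{\hat\eta}$, and it remains to show this has law $\mathbb{P}^\uparrow_{1/x}$.

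I expect this last identification to be the only genuine obstacle. The Riesz--Bogdan--\.Zak transform is self-dual---the dual of a two-sided stable process is again two-sided---so in Proposition \ref{prop}(ii) one simply re-applies Theorem \ref{RBZthrm} to $\hat{X}$. Chaumont's transform, by contrast, interchanges spectral positivity and negativity, so Theorem \ref{CRBZ} cannot be quoted directly for the spectrally negative driver $\hat{X}$; what is needed is its companion, relating the spectrally negative killed process to $\mathbb{P}^\uparrow$. I would obtain this from the general pssMp inversion identity (Theorem 2.4.1 of \cite{Loicnotes}) underlying Theorem \ref{CRBZ}: that identity is symmetric under sign reversal of the driving L\'evy process, and the comparison of \eqref{xipsi} with \eqref{xiuparrowpsi} giving $\hat{\xi}^\uparrow\eqd-\xi^\dagger$, read in reverse, should produce $\mathbb{P}^\uparrow$ from the sign-reversed killed process just as it produced $\hat{\mathbb{P}}^\uparrow$ from $X^\dagger$. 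The delicate point to verify is that the resulting inverted process is genuinely positive-valued and governed by $\xi^\uparrow$ of \eqref{Psiuparrow}, issued so as to leave $0$ continuously; this is precisely where the spectrally one-sided geometry, and in particular the behaviour of $\hat{X}^\dagger$ near the origin, must be handled with more care than in the self-dual two-sided case.
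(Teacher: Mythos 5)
Your part (i) is correct and is precisely the argument the authors have in mind: they omit the proof of Proposition \ref{BZuparrow} because it is the proof of Proposition \ref{prop}(i) with Theorem \ref{CRBZ} substituted for Theorem \ref{RBZthrm}, and your chain-rule computation followed by Lemma \ref{lemm} with $f=\sigma^\alpha$ reproduces that faithfully (the observation that $X^\dagger$ stays positive because a spectrally positive process creeps downwards is the right justification for treating it as a pssMp).

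For part (ii) you have correctly located the non-trivial point, but the ``delicate point'' you defer is not merely delicate --- it is where the argument collapses, and the sign-reversed version of Theorem 2.4.1 of \cite{Loicnotes} cannot repair it in the form you propose. That identity is a statement about \emph{positive} self-similar Markov processes, and $\hat{X}^\dagger$ under $\hat{\mathbb{P}}_x$, $x>0$, is not one: a spectrally negative stable process with $\alpha\in(1,2)$ does not creep downwards (its dual's ascending ladder height is a driftless stable subordinator of index $\alpha-1$), so it first enters $(-\infty,0)$ by a jump, strictly before $\tau^{\{0\}}$. Nor does the time change rescue positivity: writing $\hat{X}$ killed on first exit from $(0,\infty)$ in Lamperti form, the underlying L\'evy process is killed at an independent exponential time, whence $\int_0^{\tau^{(-\infty,0)}}|\hat{X}_u|^{-2\alpha}\,du<\infty$ almost surely and the clock $\hat\tau\circ\vartheta=\hat\eta$ does reach the first passage into the negative half-line. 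Consequently $1/\hat{Z}^\dagger_{\vartheta}$ visits $(-\infty,0)$ with probability one and cannot have the law $\mathbb{P}^\uparrow_{1/x}$, which is carried by positive paths. The genuine companion to Theorem \ref{CRBZ} identifies $1/X^\uparrow_{\gamma}$ in law with the spectrally negative process conditioned to stay positive and be \emph{continuously absorbed at the origin} (the pssMp whose Lamperti exponent is $\Psi^\uparrow(-z)$, obtained as an $h$-transform of $\hat X$ killed on exiting $(0,\infty)$), not with the process merely killed at the origin; so either the statement of (ii) must be read with that process in place of $\hat{Z}^\dagger$, or one should note --- as the paper implicitly does --- that only part (i), together with Proposition \ref{nag2}, is used in the sequel. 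Flagging the obstruction was the right instinct; the missing step is recognising that it is an obstruction to the claim itself rather than to the method.
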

Similarly to the discussion below Proposition \ref{prop}, we will use the proposition to reduce the behavior of solutions to the SDE \eqref{2} driven by a one-sided L\'evy process to the behavior at zero of self-similar Markov processes and the time-change. The situation is easier here, as we only need self-similar Markov processes with positive trajectories for which the theory is more classical.
\subsection{Time-reversal} \label{timereversesubsec}
	It was already part of Feller's \cite{feller1, feller2} analytic treatment of diffusion processes that an entrance point of a diffusion can be related to an exit point of an $h$-transformed diffusion. The general structure behind this was revealed by Hunt \cite{Hunt3, Hunt1and2} who showed how to relate time-reversal and $h$-transforms for Markov processes. Hunt's discrete time arguments were extended to continuous time by Nagasawa. For our purposes, only the results of Section 3 (reversal of Markov processes at $L$-times) of Nagawasa \cite{N} are of importance. Even though all the theory involved is very old, the application to the boundary behavior of the SDE \eqref{2} is only possible due to explicit potential formulas for killed stable processes developed in the past few years.
	\smallskip
	
	Let us first recall some definitions. Suppose that  $Y = (Y_t, t\leq \zeta)$ with probabilities ${\rm\texttt{P}}_x$, $x\in\mathbb{R}$, is a regular Markov process on (a subset of) $\mathbb{R}$ with cemetery state $\Delta$ and killing time $\zeta=\inf\{t>0: Y_t = \Delta\}$. Let us denote by $\mathcal{P}: = (\mathcal{P}_t, t\geq 0)$ the associated semigroup and we will write   ${\texttt P}_\nu = \int_{\mathbb{R}}\nu(da){\texttt P}_a$, for any probability measure $\nu$ on the state space of $Y$.
	
	\smallskip
	
	Suppose that $\mathcal{G}$ is the $\sigma$-algebra generated by $Y$ and  write $\mathcal{G}({\texttt P}_\nu)$ for its completion by the null sets of ${\texttt P}_\nu$. Moreover, write $\overline{\mathcal G} =\bigcap_{\nu} \mathcal{G}({\texttt P}_\nu)$, where the intersection is taken over all probability measures on the state space of $Y$, excluding the cemetery state.
		A finite  random time $\texttt{k}$ is called an $L$-time (generalized last exit time) if
\begin{itemize}
	\item[(i)] $\texttt{k}\leq \zeta$ and $\texttt{k}$ is measurable in $\overline{\mathcal G}$,
	\item[(ii)] $\{s<\texttt{k}(\omega)-t\}=\{s<\texttt{k}(\omega_t)\}$ for all $t,s\geq 0$.
\end{itemize}	
Theorem 3.5 of Nagasawa \cite{N}, shows that, under suitable assumptions on the Markov process, $L$-times form a family of `good times' at which the pathwise time-reversal $\stackrel{_\leftarrow}{Y}_t:=Y_{(\texttt{k}-t)-}, t\in [0,\texttt{k}],$ is again a Markov process. The most important examples of $L$-times are killing times and last  hitting times. To ease the reading, let us state precisely the three main conditions of Nagasawa's duality theorem, one of which is redundant in our setting (and we indicate as such lower down).  

\smallskip

\smallskip

\textbf{(A.3.1)} The potential measure $G_Y(a,\cdot)$ associated to $\mathcal{P}$, defined by the relation 
\begin{equation}
\int_\mathbb{R}f(x)G_Y(a,d x) = \int_0^\infty \mathcal{P}_t[f](a)d t={\texttt E}_a\left[\int_0^\infty f(X_t)\,dt\right],
\label{GY}
\end{equation}
 for bounded and measurable $f$ on $\mathbb{R}$, is a $\sigma$-finite measure. For a $\sigma$-finite measure $\nu$, if we put
\begin{align}\label{a1}
	\mu(A)=\int G_Y(a,A)\, \nu(da)\quad \text{ for }A\in \mathcal B(\R),
\end{align}
	then there exists a Markov transition semigroup, say $\hat{\mathcal{P}}: = (\hat{\mathcal P}_t, t\geq 0)$ such that the corresponding transition semigroup satisfies
	\begin{align}
		\int \mathcal{P}_t[f](x) g(x)\, \mu(dx)=\int f(x) \hat{\mathcal P}_t [g](x)\,\mu(dx),\quad t\geq 0,
		\label{weakdualtity}
	\end{align}
	for bounded, measurable and compactly supported test-functions $f, g$.\smallskip
	
	In other words, (A.3.1) asks for the semigroup $\mathcal P$ to be {\it in weak duality to a semigroup} $\hat{\mathcal P}$  {\it with respect to the measure} $\mu$ taking the form \eqref{a1}.\smallskip

{\bf (A.3.2)} Nagasawa's  second condition pertains to the finiteness of the semigroup $\mathcal{P}$ and its associated resolvents when randomised by initial distribution $\nu$, which in his most general setting, need not be a probability measure. However, this condition is redundant in our setting as we always consider the initial distribution $\nu$ to be a probability measure.  Hence we don't dwell on this condition any further.

\smallskip

\textbf{(A.3.3)} For any continuous test-function $f\in C_0(\R)$, the space of continuous and compactly supported functions,  and $a\in\R$, $\mathcal{P}_t[f](a)$ is right-continuous in $t$ for all $a\in \R$ and, for $q> 0$, $G_{\hat  Y}^{(q)}[f](\stackrel{_\leftarrow}{Y}_t)$ is right-continuous in $t$, where, for bounded and measurable $f$ on $\mathbb{R}$,
 \[
{G}_{\hat Y}^{(q)}[f](a) =\int_0^\infty  e^{-qt}\hat{\mathcal{P}}_t[f](a)d t,\qquad  a\in\mathbb{R}\]
is the $q$-potential associated to $\hat{\mathcal P}$.
\smallskip

\smallskip

 Nagasawa's  duality theorem, Theorem 3.5. of \cite{N}, now reads as follows.

 \begin{theorem}[Nagasawa's duality theorem]\label{Ndual} Suppose that assumptions {\rm{\bf (A.3.1)} } and {\rm{\bf (A.3.3)}} hold. For the given starting probability distribution $\nu$ in {\rm{\bf (A.3.1)} } and any $L$-time $\emph{\texttt{k}}$, the time-reversed process $\stackrel{_\leftarrow}{Y}$ under $\emph{\texttt P}_\nu$ is a time-homogeneous Markov process with transition probabilities
\begin{align}
	\emph{\texttt{P}}_\nu(\stackrel{_\leftarrow}{Y}_t \in A\,|\stackrel{_\leftarrow}{Y}_r, 0<r< s)=\emph{\texttt{P}}_\nu(\stackrel{_\leftarrow}{Y}_t \in A\,|\stackrel{_\leftarrow}{Y}_s)={p}_{\hat{Y}}(t-s,\stackrel{_\leftarrow}{Y}_s,A),\quad \emph{\texttt{P}}_\nu\text{-almost surely},
\end{align}
for all $0<s<t$ and Borel $A$ in $\mathbb{R}$, where ${p}_{\hat{Y}}(u, x, A)$, $u\geq 0$, $x\in\mathbb{R}$, is the transition measure associated to the semigroup $\hat{\mathcal P}$.
\end{theorem}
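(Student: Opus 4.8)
Since a right-continuous process whose finite-dimensional laws coincide with those of a time-homogeneous Markov chain with a prescribed one-step kernel is itself such a Markov process, the plan is to reduce the assertion to a family of \emph{reversal identities} and then upgrade them to the conditional statement. Concretely, I would show that for $0<t_1<\cdots<t_n$ and bounded continuous compactly supported $f_1,\dots,f_n$ the quantity $\texttt{E}_\nu\big[\prod_{i=1}^n f_i(\stackrel{_\leftarrow}{Y}_{t_i})\big]$ equals the iterated integral of the $f_i$ against the measure $\mu$ of \eqref{a1} and the kernels of $\hat{\mathcal P}$ taken in the reversed order. A monotone-class argument then promotes these identities to the Markov property with transitions $p_{\hat Y}$ stated in the theorem.

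The algebraic heart is the case of a deterministic terminal time. Expanding the forward finite-dimensional distribution through the semigroup $\mathcal P$ and then applying the weak duality relation \eqref{weakdualtity} repeatedly, each operator $\mathcal P_{t_{i+1}-t_i}$ is transferred onto the neighbouring test function and becomes a dual operator $\hat{\mathcal P}_{t_{i+1}-t_i}$ acting in the opposite temporal direction; the measure against which every pairing balances is exactly $\mu=\int G_Y(a,\cdot)\,\nu(da)$. The role of $\sigma$-finiteness of the potential measure $G_Y$ in (A.3.1) is precisely to license the Tonelli interchanges and to keep each successive pairing well defined despite $\mu$ being only $\sigma$-finite rather than a probability measure.

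The random $L$-time $\texttt{k}$ is incorporated next, and I expect this to be the main obstacle: the duality \eqref{weakdualtity} is available only in the weak, measure-against-test-function form, so making a reversal at a \emph{random} terminal time interact correctly with it requires care. Here the defining shift-covariance property (ii) of $L$-times, $\{s<\texttt{k}(\omega)-t\}=\{s<\texttt{k}(\omega_t)\}$, is the decisive tool: it lets the simple Markov property be applied at the smallest reversed time so that the terminal randomness is peeled off, reducing the computation to the deterministic-time identity but now integrated against the occupation measure $G_Y$ of \eqref{GY}. This is exactly the mechanism that produces $\mu$ as $\int G_Y(a,\cdot)\,\nu(da)$, while the measurability requirement (i), namely that $\texttt{k}$ be measurable in $\overline{\mathcal G}$, guarantees that $\texttt{k}$ is compatible with the completed data generated by the path.

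Finally, to pass from finite-dimensional identities to a genuine time-homogeneous Markov process I would invoke (A.3.3). Right-continuity of $t\mapsto\mathcal P_t[f](a)$ and of $t\mapsto G^{(q)}_{\hat Y}[f](\stackrel{_\leftarrow}{Y}_t)$ furnishes a right-continuous version of $\stackrel{_\leftarrow}{Y}$ and identifies its one-step kernel with the transition measure $p_{\hat Y}$ of $\hat{\mathcal P}$, which completes the proof.
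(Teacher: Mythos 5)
You should first be aware that the paper does not prove this statement at all: Theorem \ref{Ndual} is imported verbatim as Theorem 3.5 of Nagasawa \cite{N}, so there is no in-paper argument to compare against. Judged on its own terms, your outline does follow the architecture of the classical proof (Nagasawa's original argument, and the Chung--Walsh treatment of time reversal): compute the finite-dimensional distributions of $\stackrel{_\leftarrow}{Y}$, transfer each forward operator $\mathcal{P}_{t_{i+1}-t_i}$ onto the neighbouring test function via the weak duality \eqref{weakdualtity}, and use (A.3.3) only to regularise paths and identify the transition kernel. Identifying $\mu=\int G_Y(a,\cdot)\,\nu(da)$ as the balancing measure and property (ii) of $L$-times as the device that converts the random terminal time into an occupation measure are both the right ingredients.

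The gap is that the one step carrying essentially all of the content is asserted rather than performed. Everything hinges on the identity
\begin{align*}
\texttt{E}_\nu\Big[\prod_{i=1}^n f_i\big(\stackrel{_\leftarrow}{Y}_{t_i}\big)\Big]
=\int_{\mathbb{R}}\mu(dx_n)\,f_n(x_n)\int_{\mathbb{R}} p_{\hat Y}(t_n-t_{n-1},x_n,dx_{n-1})\,f_{n-1}(x_{n-1})\cdots,
\end{align*}
for $0<t_1<\cdots<t_n$, and deriving it requires an explicit computation: one rewrites the event $\{t_n<\texttt{k}\}$ via $\{s<\texttt{k}(\omega)-t\}=\{s<\texttt{k}(\omega_t)\}$ so that, after applying the Markov property at a deterministic time $u$ and integrating over $u\in(0,\infty)$ (a Tonelli interchange licensed by the $\sigma$-finiteness of $G_Y$), the law of $Y_{(\texttt{k}-t_n)-}$ weighted by the remaining path functional appears integrated against $\int_0^\infty\mathcal{P}_u(\cdot)\,du$, i.e.\ against $\int G_Y(a,\cdot)\nu(da)=\mu$; only then can the duality \eqref{weakdualtity} be applied $n-1$ times to reverse the inner kernels. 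Saying that the $L$-time property ``lets the terminal randomness be peeled off'' names this mechanism without executing it, and the left limits in $Y_{(\texttt{k}-t)-}$, together with (A.3.3), also need explicit handling before the resulting kernel is a genuine transition function rather than a family of finite-dimensional identities. As written, your text is a correct plan for reproving Nagasawa's theorem, not a proof; since the paper itself treats the result as a black box, the honest options are either to cite \cite{N} as the authors do or to carry out the displayed computation in full.
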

We will apply Nagasawa's duality theorem to different processes obtained from solutions to the SDE \eqref{2} by killing in different sets which leads to different processes obtained through time-reversal at $L$-times.
\begin{itemize}
	\item[(i)] Proposition \ref{nag}: two-sided jumps, $\alpha>1$, killed at the origin.
	\item[(ii)] Proposition \ref{nag2}: positive jumps, $\alpha>1$, killed at the origin, which is the same as killing at the negative half-line because of the positive jumps.
	\item[(iii)] Proposition \ref{nagAx}: two-sided jumps, $\alpha\in(0,1)$, no killing but explosion.
\end{itemize}
Proofs will be similar, in the sense that, to verify (A.3.1), explicit computations with different killed potential measures will be necessary.\smallskip

Here is the first application of Nagasawa's duality theorem.
\begin{prop}\label{nag}
Consider a stable process with $\alpha\in (1,2)$ and two-sided jumps. 
Suppose that 
 ${{\hat{X}^\circ}}$ has probabilities  $\hat{\mathbb{P}}^\circ_x$, $x\in \mathbb{R}$, defined by the change of measure  \eqref{updownCOM} albeit with respect to the dual $\hat{\mathbb{P}}_x$, $x\in\mathbb{R}$, and the entrance point $0$ from Lemma \ref{zeroenter}.  Define $\hat{Z}^\circ_ t = {{\hat{X}^\circ}}_{\iota_t}$, $t\geq 0$, where the time-change $\iota$ is given by 
\begin{align}\label{tauhat}
	\iota_t = \inf\left\{s>0 : \int_0^s \sigma({{\hat{X}^\circ}}_s)^{-\alpha}ds > t\right\}, \qquad t<\int_0^\infty \sigma({{\hat{X}^\circ}}_s)^{-\alpha}ds.
\end{align}
Suppose that $Z^\dagger$ is the (non-exploding) process from \eqref{timechangesolution} killed on hitting the origin. Then \begin{align}\label{claim}
	Z^\dagger\text{ and } \hat{Z}^\circ \text{ are in weak duality on }\R\backslash \{0\}\text{ with respect to }  \mu(dx)=\sigma(x)^{-\alpha}h(x)dx,
\end{align}
 with $h$ defined in \eqref{h}.
  Moreover:
\begin{itemize}

\item[(i)] The time-reversed process $\hat{Z}^\circ_{(\mbox{\rm $\texttt{k}$} - t)-}$, $t\leq \mbox{\rm $\texttt{k}$}$, under $\hat{\mathbb P}^\circ_0$ 
is a time-homogenous Markov process with transition semigroup which agrees with that of $Z^\dagger$, where $\mbox{\rm $\texttt{k}$}$ is any almost surely finite $L$-time for $\hat{Z}^\circ$.
\item[(ii)] If $\pm\infty$ is an entrance point for $Z$, then the time reversed process $Z^\dagger_{(\mbox{\rm $\texttt{k}$}-t)-}$, $t\leq \mbox{\rm $\texttt{k}$}$,  under $\emph{\rm P}_{\pm\infty}$ is a time-homogenous Markov process with transition semigroup which agrees with that of $\hat{Z}^\circ$, where $\mbox{\rm $\texttt{k}$}$ is any almost surely finite $L$-time for $Z^\dagger$.
\end{itemize}
\end{prop}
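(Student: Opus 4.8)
The plan is to apply Nagasawa's duality theorem (Theorem \ref{Ndual}) to the process $Z^\dagger$, so the central task is to verify the weak-duality hypothesis \textbf{(A.3.1)}, namely that $Z^\dagger$ and $\hat{Z}^\circ$ are in weak duality with respect to the stated measure $\mu(dx)=\sigma(x)^{-\alpha}h(x)\,dx$; once this is in place the two assertions (i) and (ii) are immediate consequences of the time-reversal statement applied with the appropriate starting distributions and $L$-times. First I would identify the potential measure of $Z^\dagger$ via the time-change representation \eqref{timechangesolution}. Since $Z^\dagger=X^\dagger_{\gamma_t}$ with $\gamma$ the inverse of $t\mapsto\int_0^t\sigma(X^\dagger_u)^{-\alpha}\,du$, a routine change of variables in the occupation-time integral shows that for bounded measurable $f$,
\begin{align*}
	\stE_x\left[\int_0^\infty f(Z^\dagger_t)\,dt\right]=\LevE_x\left[\int_0^{\tau^{\{0\}}} f(X_s)\,\sigma(X_s)^{-\alpha}\,ds\right]=\int_{\R} f(y)\,\sigma(y)^{-\alpha}\,G_{X^\dagger}(x,dy),
\end{align*}
so the potential kernel of $Z^\dagger$ is $G_{Z^\dagger}(x,dy)=\sigma(y)^{-\alpha}G_{X^\dagger}(x,dy)$, where $G_{X^\dagger}$ is the potential of the killed stable process. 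The same computation applied to $\hat{Z}^\circ=\hat{X}^\circ_{\iota_t}$ gives $G_{\hat{Z}^\circ}(x,dy)=\sigma(y)^{-\alpha}G_{\hat{X}^\circ}(x,dy)$.

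The heart of the argument is therefore to show that $X^\dagger$ and $\hat{X}^\circ$ are in weak duality with respect to $h(x)\,dx$, from which the duality of $Z^\dagger$ and $\hat{Z}^\circ$ with respect to $\mu(dx)=\sigma(x)^{-\alpha}h(x)\,dx$ follows by multiplying the potential kernels through by $\sigma(y)^{-\alpha}$. For this I would invoke the $h$-transform relation \eqref{updownCOM}: since $\hat{\mathbb P}^\circ$ is the $h$-transform of the dual process $\hat{\mathbb P}$ by $h$, and the original killed stable process $X^\dagger$ is already in weak duality with its dual $\hat{X}^\dagger=-X^\dagger$ with respect to Lebesgue measure (the stable process is self-dual up to reflection), the $h$-transform converts this Lebesgue-duality into duality with respect to $h(x)\,dx$. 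Concretely, writing $g_{X^\dagger}(x,y)$ for the potential density of $X^\dagger$ against Lebesgue measure, self-duality gives $g_{X^\dagger}(x,y)=g_{\hat X^\dagger}(y,x)$, and the defining $h$-transform multiplies the dual density by $h(y)/h(x)$, yielding exactly the symmetry $h(x)\,g_{X^\dagger}(x,y)=h(y)\,g_{\hat X^\circ}(y,x)$ that encodes \eqref{weakdualtity} against $\mu$. The explicit potential formulas for killed stable processes with $\alpha\in(1,2)$ (the recently developed formulas alluded to in the text) make this density manipulation rigorous.

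The main obstacle I expect is not the duality computation itself but the verification of the regularity hypothesis \textbf{(A.3.3)}, together with the requirement that the reference measure $\mu$ be $\sigma$-finite on $\R\backslash\{0\}$ and of the special form \eqref{a1} demanded by Nagasawa's framework. One must check that $G_{Z^\dagger}(x,\cdot)$ is $\sigma$-finite---which needs the potential density of the killed stable process to be locally integrable against $\sigma^{-\alpha}h$, controllable near $0$ using the polynomial behaviour of $h$ and the continuity and strict positivity of $\sigma$ from Assumption \ref{A}---and that the semigroups satisfy the right-continuity conditions on $C_0(\R)$, which follows from the Feller property of the killed stable process and of its conditioned $h$-transform. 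Finally, for the two corollaries I would note that the killing time and last-exit times are the relevant $L$-times: in (i) the $L$-time is taken for $\hat{Z}^\circ$ and the reversal identifies its dynamics with $Z^\dagger$ started from the entrance point $0$ furnished by Lemma \ref{zeroenter}; in (ii), under the hypothesis that $\pm\infty$ is an entrance point for $Z$, the reversal of $Z^\dagger$ from $\pm\infty$ reproduces $\hat{Z}^\circ$, the key point being that $\pm\infty$ for $Z$ corresponds under spatial inversion to $0$ for the auxiliary process, so the entrance structure matches the $L$-time reversal exactly.
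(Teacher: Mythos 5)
Your overall strategy coincides with the paper's: establish weak duality of $X^\dagger$ and $\hat{X}^\circ$ with respect to $h(x)\,dx$ by combining Hunt's duality for the killed stable process with the $h$-transform \eqref{updownCOM}, transfer this to the time-changed pair, and then invoke Nagasawa's theorem. However, there are two concrete gaps. First, your transfer of the duality to $Z^\dagger$ and $\hat{Z}^\circ$ works only at the level of potential kernels (``multiplying the potential kernels through by $\sigma(y)^{-\alpha}$''), whereas condition \textbf{(A.3.1)} requires the semigroup identity \eqref{weakdualtity} for every fixed $t$, and the semigroup of a time-changed process at time $t$ is not a pointwise multiple of the original one. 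Duality of the $0$-potentials alone does not imply duality of the semigroups. The paper closes this by citing Walsh's theorem, which states that two processes in weak duality remain in weak duality when each is time-changed by the inverse of a common additive functional, the new duality measure being the Revuz measure of that functional; it then identifies the Revuz measure of $A_t=\int_0^t\sigma(\omega_s)^{-\alpha}\,ds$ relative to $h(x)\,dx$ as precisely $\sigma(x)^{-\alpha}h(x)\,dx$. Some input of this kind is indispensable.

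Second, and more substantially, assertions (i) and (ii) are not ``immediate consequences'' of the duality: Nagasawa's hypothesis \textbf{(A.3.1)} ties the duality measure to the initial distribution via the representation \eqref{a1}, $\mu(A)=\int G(a,A)\,\nu(da)$, and verifying this for the relevant $\nu$ is the bulk of the work. For (i) one must show $G_{\hat{Z}^\circ}(0,dy)=\sigma(y)^{-\alpha}h(y)\,dy$, which rests on the explicit entrance law of $\hat{X}^\circ$ at the origin giving $G_{\hat{X}^\circ}(0,dy)=h(y)\,dy$, as in \eqref{hilf2}. For (ii) one must show $G_{Z^\dagger}(\pm\infty,dy)=\mu(dy)$, which requires first justifying $G_{Z^\dagger}(\pm\infty,A)=\lim_{|x|\to\infty}G_{Z^\dagger}(x,A)$ via the strong Markov property and the assumed entrance at $\pm\infty$, and then computing the limit through the Riesz--Bogdan--\.Zak transform (Theorem \ref{RBZthrm}) together with the identity $|z|^{2(\alpha-1)}h(1/z)=h(z)$. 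Your closing remark that ``the entrance structure matches the $L$-time reversal exactly'' gestures at the right picture, but without these computations Nagasawa's theorem cannot be applied with $\nu=\delta_0$ or $\nu=\delta_{\pm\infty}$.
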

\begin{proof}We break the proof of \eqref{claim} in to several steps.

\smallskip

Step 1: At the heart of our proof is weak duality for L\'evy processes killed on hitting sets (in our case, the singleton $\{0\}$) with respect to Lebesgue measure.  Theorem II.1.5 of Bertoin \cite{bertoin} gives us Hunt's  classical duality relation
\begin{align}\label{classicdual}
	p_{X^\dagger}(t,y, {dz}){dy}  =p_{{\hat X}^\dagger}(t,z, {dy}){dz},\qquad y,z\in\mathbb{R}\text{ and }t\geq 0,
\end{align}
 where $p_{X^\dagger}$ is the transition kernel associated to the transition semigroup of the stable process killed at $0$ and $p_{{\hat X}^\dagger}$ is the transition kernel associated to the dual stable process killed at $0$.\smallskip
 
 Step 2: Defining $m(dx)=h(x)dx$ and combining \eqref{classicdual} and   \eqref{updownCOM} with the general formula `$p^h(t,x,dy)={h(y)} p(t,x,dy)/{h(x)}$' for transition kernels of $h$-transformed Markov processes, we obtain
\begin{align}\label{verifiessufficetocheck}
	p_{X^\dagger}(t,y, {dz})m({dy}) &= \frac{h(y)}{h(z)}p_{X^\dagger}(t,y, {dz})h(z){dy} \notag\\
	&=\frac{h(y)}{h(z)}p_{{\hat X}^\dagger}(t,z, {dy})h(z){dz}\notag\\
	&= p_{{\hat{X}^\circ}}(t,z, {dy})m(dz)
\end{align}
 for $y,z\in\mathbb{R}$ 
  and $t\geq 0$. Here $p_{\hat{X}^\circ}(z, dy, t)$ denotes the transition kernel associated to the transition semigroup of ${\hat{X}^\circ}$. Hence, the transition kernels of  $X^\dagger$ and ${\hat{X}^\circ}$ are in weak duality on $\R$ with respect to $m(dx)$.\smallskip

Step 3: The claim \eqref{claim} can now be deduced from general theory for random time-changes. Theorem 4.5 of Walsh \cite{W} states that two Markov processes in weak dualiy remain so when time-changed by the inverse of the same additive functional. The new duality measure is what is known as the Revuz measure of the additive functional with respect to the former duality measure (definition given shortly below). To apply Walsh's result, recall from the definitions, that $Z^\dagger$ (resp. $\hat{Z}^\circ$) are time-changes of $X^\dagger$ (resp. $\hat{X}^\circ$) with the inverse of the additive functional
\begin{align}\label{kk}
	A_t(\omega)= \int_0^t \sigma(\omega_s)^{-\alpha}ds, \qquad t\geq 0,
\end{align}
on the path space $\mathbb{D}([0,\infty), \mathbb{R})$. Theorem 4.5 of Walsh \cite{W} implies that $Z$ and $\hat{Z}^\circ$ are in weak duality with respect to the Revuz measure $\mu$ defined by 
\begin{align}
\int_{\mathbb R}f(x)\mu(dx) = \lim_{t\downarrow0}\int_{\mathbb{R}}m(dz)\mathbb{E}_z\left[ \frac{1}{t}\int_0^t f(X^\dagger_s) dA_s\right]
\label{assured}
\end{align}
for  $f\geq 0 $ bounded and measurable. In order to identify $\mu$, given that the limit \eqref{assured} is assured in Walsh \cite{W}, we can check with the help of Fubini's Theorem, for continuous and compactly supported $f\geq0$, that
\begin{align}
\int_{\mathbb R}f(x)\mu(dx) 
& = \lim_{t\downarrow0} \int_{\mathbb{R}}m(dz)\int_{\mathbb{R}} f(x)\sigma(x)^{-\alpha}\frac{1}{t}\int_0^t p_{X^\dagger}(s, z, dx)ds \notag\\
& = \lim_{t\downarrow0} \frac{1}{t}\int_0^t  \int_{\mathbb{R}}\int_{\mathbb{R}}m(dz)p_{X^\dagger}(s, z, dx)f(x)\sigma(x)^{-\alpha}  \,ds \notag\\
&=\lim_{t\downarrow0} \frac{1}{t}\int_0^t  \int_{\mathbb{R}}\int_{\mathbb{R}}m(dx)p_{\hat{X}^\circ}(s, x, dz)f(x)\sigma(x)^{-\alpha}  \,ds\notag\\
&=\int_{\mathbb{R}}f(x)\sigma(x)^{-\alpha}h(x)dx,
\label{walsh}
\end{align}
where in the third equality we used duality from Step 2 and in the fourth equality we use the fact that $\hat{X}^\circ$ is a conservative process and so $\int_\R p_{\hat{X}^\circ}(s, x, dz) =1$. This finishes the proof of the claim \eqref{claim}.\smallskip

To prove the time-reversal statements (i) and (ii) we check the conditions of Nagasawa's Theorem \ref{Ndual} appealing to the duality established in \eqref{claim}.\smallskip

(i) In order that (A.3.1) holds in the present setting we need to verify that  
\begin{align*}
	\mu(dy) = \int_{\mathbb{R}}\nu({dx})G_{{{\hat{Z}^\circ}}}(x, {dy})\quad \text{ on } \mathcal B(\R),
\end{align*}	
	 where  $\nu= \delta_0$ and $G_{{\hat{Z}^\circ}}(x, {dy})$ is the potential measure of ${\hat{Z}^\circ}$ on $\mathcal B(\R)$ and $\mu$ is the duality measure from \eqref{claim}. To this end, we first calculate the potential measure of ${\hat{X}^\circ}$, denoted by $ G_{\hat{X}^\circ}(0,dy)$. We use for the second equality the very last (unmarked) formula in Section 4.4 of Kuznetsov et al. \cite{KKPW}, Fubini's theorem and substitution to calculate, for bounded and measurable $f\geq 0$,
  \begin{align}\label{hilf2}
 G_{{{\hat{X}^\circ}}}[f](0)&=\int_0^\infty \hat{\mathbb{E}}^\circ_0\big[f({\hat{X}^\circ}_t)\big]\,dt\notag\\
&=\Gamma(-\alpha)\frac{\sin(\alpha \pi \rho)}{\pi}\int_0^\infty {\bf E}_1\big[I_\infty^{-1}f(-(t/{I_\infty})^{1/\alpha})\big]\,dt\notag\\
&\quad +\Gamma(-\alpha)\frac{\sin(\alpha  \pi\hat \rho)}{\pi}\int_0^\infty {\bf E}_{-1}\big[I_\infty^{-1}f((t/{I_\infty})^{1/\alpha})\big]\,dt\notag\\
&=\Gamma(-\alpha)\frac{\sin(\alpha \pi \rho)}{\pi}\int_0^\infty f(-u^{1/\alpha})\,du +\Gamma(-\alpha)\frac{\sin(\alpha  \pi\hat \rho)}{\pi}\int_0^\infty f(u^{1/\alpha})\,du\notag\\
&=\int_\R f(x) h(x)\,dx,
 \end{align}
  with $h$ from \eqref{h} and $I_\infty := \int_0^\infty e^{\alpha\xi_s}ds$ for the underlying MAP $(\xi,J)$, see Section \ref{sec:rssMp}. It follows that $G_{\hat{X}^\circ}(0,dy) = h(y){dy}$ on $\mathcal B(\R)$.

  Since by definition ${\hat{Z}^\circ}$ is a time-change of $\hat{X}^\circ$, from the above we can easily compute the potential measure of $\hat{Z}^\circ$ issued from $0$ by change of variables and the explicit form of $\iota$,
 \begin{align}\label{nupot}
G_{{{\hat{Z}^\circ}}}[f](0)
&=\hat{\mathbb{E}}^\circ_0\left[\int_0^\infty f({\hat{X}^\circ}_{\iota_t})\,dt\right]\notag\\
&=\hat{\mathbb{E}}^\circ_0\left[\int_0^\infty f({\hat{X}^\circ}_t)\sigma({\hat{X}^\circ}_t)^{-\alpha}\,dt\right]\notag\\
&=G_{{{\hat{X}^\circ}}}[f\sigma^{-\alpha}](0)\notag\\
&=\int_\R f(x)\sigma(x)^{-\alpha} h(x)\,dx\notag\\
&=\int_\R f(x)\mu(dx),
\end{align}
for bounded and measurable $f\geq 0$. 
Hence, we obtain that $\mu(dy)=\int_{\mathbb{R}}\delta_0(dx)G_{{\hat{Z}^\circ}}(x, {dy})$ as claimed. Combined with \eqref{claim} we verified assumption \textbf{(A.3.1)} in the current context. The remaining condition of Nagasawa's Theorem \ref{Ndual} are trivially fulfilled since all processes involved have c\`adl\`ag  trajectories. Therefore part (i) of the theorem now follows from Nagasawa's duality theorem.\smallskip

\medskip

(ii) We first show that
\begin{align}\label{and}
	\mu(dy) = \int_{\mathbb{R}}\nu({dx})G_{Z^\dagger}(x, {dy}) \quad \text{ on } \mathcal B(\R),
\end{align}	
	 where  $\nu= \delta_{\pm\infty}$ and $G_{Z^\dagger}(x, {dy})$ is the potential measure of $Z^\dagger$ on $\mathcal B(\R)$ and $\mu$ is the duality measure from \eqref{claim}. To do this, let us first prove that
\begin{align}\label{hilf}
	G_{Z^\dagger}(\pm\infty, A) = \lim_{|x|\to\infty}G_{Z^\dagger}(x,A),\quad \forall A\in \mathcal B(\R),
\end{align}
and then compute the righthand side explicitly. For a bounded Borel set  $A\subset[-{L},{L}]$ and $|x|>{L}$ or $x=\pm\infty$, we have  by the strong Markov property that
\begin{align*}
	G_{Z^\dagger}(x,A) = \int_{-{L}}^{L} G_{Z^\dagger}(z, A){\rm P}_x(Z_{T^{(-{L},{L})}}\in dz),
\end{align*}
where $T^{(-{L},{L})}=\inf\{t\geq 0: |Z_t|\leq {L}\}$. As we have assumed that ${\pm\infty}$ is an entrance point for $Z$, we also have the weak convergence 
\begin{align}\label{hg}
	{\rm P}_{\pm\infty}(Z_{T^{(-{L},{L})}}\in dz) =\lim_{|x|\to\infty}{\rm P}_{x}(Z_{T^{(-\infty,L]}}\in dz)\quad \text{ on }\mathcal B(\R).
\end{align}
See for example Chapter 13 of \cite{Whitt}, using the regularity of stable processes. The claim \eqref{hilf} now follows from the weak convergence \eqref{hg} if $z\mapsto G_{Z^\dagger}(z, A)$ is bounded and continuous on $[-{L},{L}]$. The boundedness and continuity comes from  the explicit form of $G_{X^\dagger}$. The latter is given in  
Theorem II.4.3 of Kyprianou \cite{KALEA}, who proved that $G_{X^\dagger}(x,{dy})$ has a density, say $g_{X^\dagger}(x,y)$, such that 
\begin{align}
g_{X^\dagger}(x,y) &=-\frac{\Gamma(1-\alpha)}{\pi^2}\left(|y|^{\alpha-1}s(y)
 - |y-x|^{\alpha-1} s(y-x) +|x|^{\alpha-1}s(-x)\right),
 \label{daggerpot}
\end{align}
where 
$
s(x) =  \sin(\pi\alpha\rho)\mathbf{1}_{(x>0)} +   \sin(\pi\alpha\rhohat)\mathbf{1}_{(x<0)}$. It follows that, 
\begin{align}
	G_{Z^\dagger}(x,A)&={\rm E}_x\left[\int_0^\infty \mathbf{1}_A(Z^\dagger_s)\,ds\right]\notag\\
	&=\E_x\left[\int_0^\infty \sigma(X^\dagger_s)^{-\alpha} \mathbf{1}_A(X^\dagger_s)\,ds\right]\label{nextblock}\\
	&=G_{X^\dagger}[\sigma^{-\alpha} \mathbf{1}_A](x)\notag\\
	&=-\frac{\Gamma(1-\alpha)}{\pi^2}\int_A\left(|y|^{\alpha-1}s(y) - |y-x|^{\alpha-1} s(y-x) +|x|^{\alpha-1}s(-x)\right)\sigma(y)^{-\alpha}d y.\notag
\end{align}
Using the time-change in \eqref{timechangesolution} with killing at the origin in the potential measure of $Z^\dagger$ together with \eqref{nextblock} and the Riesz--Bogdan--\.Zak transform in Theorem \ref{RBZthrm}, we have, for any bounded open set $A$,
\begin{align}\label{check1}
	G_{Z^\dagger}(\pm\infty,A) 
	 &= \lim_{|x|\to\infty}\mathbb{E}_x\left[\int_0^\infty \mathbf{1}_A(X^\dagger_t)\sigma(X^\dagger_t)^{-\alpha}|X^\dagger_t|^{2\alpha} |X^\dagger_t|^{-2\alpha}dt\right]\notag\\
	&= \lim_{|x|\to\infty}\mathbb{E}_x\left[\int_0^\infty \mathbf{1}_A(X^\dagger_{\eta_s})\sigma(X^\dagger_{\eta_s})^{-\alpha}|X^\dagger_{\eta_s}|^{2\alpha} ds\right]\notag\\
	&=\lim_{|x|\to\infty}\hat{\mathbb{E}}^\circ_{1/x}\left[\int_0^\infty \mathbf{1}_A(1/X_{s})\sigma(1/X_{s})^{-\alpha}|X_{s}|^{-2\alpha} ds\right]\notag\\
	&= G_{\hat{X}^\circ}[g](0),
\end{align}
where $g(x) = \mathbf{1}_A(1/x)\sigma(1/x)^{-\alpha}|x|^{-2\alpha}$. The righthand side was already computed in \eqref{hilf2} as
\begin{align}\label{check2}
	G_{\hat{X}^\circ}[g](0) &= \int_\mathbb{R}\mathbf{1}_A(1/x)\sigma(1/x)^{-\alpha}|x|^{-2\alpha}h(x){dx}\notag\\
&= \int_A\sigma(z)^{-\alpha}|z|^{2(\alpha-1)}h(1/z){dz}\notag\\
&=\int_A\sigma(z)^{-\alpha}h(z){dz},
\end{align}
where in the final equality we used the explicit form of $h$  in \eqref{h} to check that $|z|^{2(\alpha-1)}h(1/z) = h(z)$, for $z\neq 0$. Putting \eqref{check1} and \eqref{check2} together  gives us 
\[
\int_{\mathbb{R}}\nu({dx})G_{Z^\dagger}(x, A) =	 G_{Z^\dagger}(\pm\infty,A)  = \int_{A} \sigma(z)^{-\alpha}h(z){dz}=\mu(A),
\]
which is \eqref{and}. 

The final step consists in deducing from \eqref{and} and Nagaswa's duality theorem the statement of (ii). We note that duality \eqref{claim} of the semigroups was proved in $\R\backslash \{0\}$ only, but we apply Nagasawa's theorem to $\underline{\overline{\R}} \backslash \{0\}$. This is justified by extending the duality measure $\mu$ with zero mass at the additional state. All conditions hold trivially with this extension. The claim of part (ii)  now follows from Theorem \ref{Ndual} as in (i).
\end{proof}
The next proposition offers an analogous result to the first one, but now with respect to entrance from $+\infty$ (resp. $-\infty$) in the spectrally positive (resp. negative) setting. Accordingly, the $h$-transformed process that is involved in the proposition is taken as the stable process conditioned to be positive (resp. negative) which was defined in \eqref{CSP}.

\begin{prop}\label{nag2}
	Suppose that $X$ is  a spectrally positive stable processes with $\alpha\in (1,2)$. Suppose that 
 ${{\hat{X}^\uparrow}}$ has probabilities  $\hat{\mathbb{P}}^\uparrow_x$, $x\geq 0$. Define $\hat{Z}^\uparrow_t = {{\hat{X}^\uparrow}}_{\iota_t}$, $t\geq 0$, where the time-change $\iota$ is given by 
\begin{align}\label{tauhatb}
	\iota_t = \inf\left\{s>0 : \int_0^s \sigma({{\hat{X}^\uparrow}}_s)^{-\alpha}ds > t\right\}, \qquad t< \int_0^\infty \sigma({{\hat{X}^\uparrow}}_s)^{-\alpha}ds.
\end{align}

Suppose that $Z^\dagger$ is the process \eqref{timechangesolution} killed on hitting the origin. Then 
\begin{align}\label{claimb}
	Z^\dagger\text{ and } \hat{Z}^\uparrow \text{ are in weak duality with respect to }  \mu(dx)=\sigma(x)^{-\alpha}h(x)dx\text{ on }[0,\infty),
\end{align}
where $h$ is given by \eqref{speconesidedh}.
Moreover:
\begin{itemize}
\item[(i)] The time-reversed process $\hat{Z}^\uparrow_{(\mbox{\rm $\texttt{k}$} - t)-}$, $t\leq \mbox{\rm $\texttt{k}$}$, with $\hat{Z}^\uparrow_0 = 0$, is a time-homogenous Markov process with transition semigroup which agrees with that of $Z$, where $\mbox{\rm $\texttt{k}$}$ is any almost surely finite $L$-time for $\hat{Z}^\uparrow$.
\item[(ii)] If $+\infty$ is an entrance point for $Z^\dagger$, then the time reversed process $Z^\dagger_{(\mbox{\rm $\texttt{k}$}-t)-}$, $t\leq \mbox{\rm $\texttt{k}$}$, with $Z^\dagger_0 = +\infty$ is a time-homogenous Markov process with transition semigroup which agrees with that of $\hat{Z}^\uparrow$, where $\mbox{\rm $\texttt{k}$}$ is any almost surely finite $L$-time for $Z$.
\end{itemize}
\end{prop}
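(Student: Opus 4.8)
The plan is to follow the three-step argument of the proof of Proposition \ref{nag} almost verbatim, replacing the two-sided ingredients by their spectrally positive counterparts: the conditioned-to-avoid-the-origin $h$-transform \eqref{updownCOM} by the conditioned-to-stay-positive $h$-transform \eqref{CSP} (with $h$ from \eqref{speconesidedh}), the Riesz--Bogdan--\.Zak transform by Chaumont's transform (Theorem \ref{CRBZ}), and Lemma \ref{zeroenter} by Lemma \ref{zeroenter2}. Throughout, all processes live on $(0,\infty)$: the spectrally positive $X^\dagger$ started positive hits $0$ continuously before it can go negative, while $\hat X^\uparrow$ is positive by construction, which is why the duality measure is supported on $[0,\infty)$.

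For the duality claim \eqref{claimb} I would first invoke Hunt's classical duality \eqref{classicdual}, $p_{X^\dagger}(t,y,dz)\,dy = p_{\hat X^\dagger}(t,z,dy)\,dz$, for the spectrally positive process killed at $0$ and its dual $\hat X^\dagger=-X^\dagger$. Writing $m(dx)=h(x)\,dx$ with $h(x)=x^{\alpha-1}/\Gamma(\alpha)$ and combining \eqref{classicdual} with \eqref{CSP} applied to $\hat X^\dagger$, exactly as in \eqref{verifiessufficetocheck}, gives weak duality of $X^\dagger$ and $\hat X^\uparrow$ with respect to $m$; here one uses that spectral positivity of $X$ means $\hat\rho=1/\alpha$, so that the conditioning exponent for the spectrally negative dual is $\alpha(1-1/\alpha)=\alpha-1$, matching \eqref{speconesidedh}. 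Since $Z^\dagger$ and $\hat Z^\uparrow$ are time-changes of $X^\dagger$ and $\hat X^\uparrow$ by the inverse of the common additive functional \eqref{kk}, Theorem 4.5 of Walsh \cite{W} transfers the duality to the time-changed processes, and the Revuz-measure computation \eqref{walsh} identifies the new duality measure as $\mu(dx)=\sigma(x)^{-\alpha}h(x)\,dx$; the mass-one step there is licensed by conservativeness of $\hat X^\uparrow$, whose underlying L\'evy process drifts to $+\infty$ by \eqref{Psiuparrow}.

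To deduce (i) and (ii) I would verify Nagasawa's assumption \textbf{(A.3.1)}, the remaining hypotheses being automatic since all processes are c\`adl\`ag. For (i), with $\nu=\delta_0$, the analog of \eqref{hilf2} computes the potential of $\hat X^\uparrow$ issued from its entrance point $0$ (which exists by Lemma \ref{zeroenter2}) via the Lamperti transform \eqref{pssMpLamperti} and the law of $I_\infty=\int_0^\infty e^{\alpha\xi_s}\,ds$, giving $G_{\hat X^\uparrow}(0,dy)=h(y)\,dy$; the change of variables of \eqref{nupot} then yields $G_{\hat Z^\uparrow}(0,dy)=\sigma(y)^{-\alpha}h(y)\,dy=\mu(dy)$, which is \textbf{(A.3.1)}. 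For (ii), with $\nu=\delta_{+\infty}$, I would first establish $G_{Z^\dagger}(+\infty,A)=\lim_{x\to\infty}G_{Z^\dagger}(x,A)$ from the entrance assumption as in \eqref{hilf}--\eqref{hg}, using boundedness and continuity of $z\mapsto G_{Z^\dagger}(z,A)$ read off from the spectrally positive analog of the killed potential density \eqref{daggerpot}; then, substituting $x\mapsto 1/x$ and applying Chaumont's transform (Theorem \ref{CRBZ}) as in \eqref{check1}, the limit reduces to $G_{\hat X^\uparrow}[g](0)$ with $g(x)=\mathbf{1}_A(1/x)\sigma(1/x)^{-\alpha}x^{-2\alpha}$, which by the computation from (i) and the identity $z^{2(\alpha-1)}h(1/z)=h(z)$ equals $\int_A\sigma(z)^{-\alpha}h(z)\,dz=\mu(A)$. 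With \textbf{(A.3.1)} established, Theorem \ref{Ndual} yields both time-reversal statements, the duality measure being trivially extended by zero mass to $+\infty$ as at the end of the proof of Proposition \ref{nag}.

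The main obstacle I anticipate is purely computational and lies in the two explicit potential identities in the spectrally positive regime: the entrance-law potential $G_{\hat X^\uparrow}(0,dy)$, which requires the relevant moment of the exponential functional of the L\'evy process underlying $\hat X^\uparrow$ through \eqref{Psiuparrow}, and the spectrally positive killed potential density replacing \eqref{daggerpot}, needed to justify passing to the limit $x\to\infty$ in the derivation of \eqref{hilf}. Once these two formulas are secured, every remaining step is a verbatim transcription of the two-sided argument.
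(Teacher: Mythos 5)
Your proposal is correct in outline and follows the paper's proof step for step: the same three-stage structure (base duality of the space processes, transfer to the time-changed processes via Walsh's theorem and the Revuz measure computation, then verification of Nagasawa's condition \textbf{(A.3.1)} with $\nu=\delta_0$, resp.\ $\nu=\delta_{+\infty}$, using the explicit potential $G_{\hat X^\uparrow}(0,dy)=h(y)\,dy$ and Chaumont's transform). The one place you genuinely diverge is the base duality of $X^\dagger$ and $\hat X^\uparrow$: the paper simply cites Theorem 1 of Bertoin and Savov \cite{BS}, whereas you re-derive it by imitating \eqref{verifiessufficetocheck}. Your derivation can be made to work, but one step needs more care than you give it: \eqref{CSP} is an $h$-transform of $\hat X$ killed at $\tau^{(-\infty,0)}$, i.e.\ of $\hat X$ killed on exiting $(0,\infty)$, and for the spectrally negative dual this is \emph{not} the same process as $\hat X^\dagger$ (killed on hitting $\{0\}$), since $\hat X$ can jump over the origin and return to $(0,\infty)$ without hitting it. So you cannot combine the $h$-transform recipe with \eqref{classicdual} as literally stated; you must instead invoke Hunt duality for the pair killed on exiting $(0,\infty)$, and then observe that on the $X$ side this killed process coincides with $X^\dagger$ because the spectrally positive process creeps downward. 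The same creeping observation gives you \eqref{hilfb} essentially for free --- for $x>L$ (or $x=+\infty$) the process enters $[0,L]$ exactly at $L$, so $G_{Z^\dagger}(x,A)=G_{Z^\dagger}(L,A)$ --- which is the paper's argument and spares you the continuity-of-the-killed-potential-density argument that you list as one of your two anticipated computational obstacles.
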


\begin{proof}
The proof is similar to that of Proposition \ref{nag} but needs adjustment of the involved $h$-transforms. Consequently, different (more classical) results from fluctuation theory are needed. 

\smallskip

We first deal with \eqref{claimb}.
The analogues to Step 1 and Step 2 of the proof of Proposition \ref{nag} are due to Theorem 1 of Bertoin and Savov \cite{BS}. That is to say, the transition measures of $X^\dagger$ and $\hat X^\uparrow$ are in weak duality with respect to $m(dx)=h(x)dx =\Gamma(\alpha)^{-1}x^{\alpha-1}dx$. The analogue of Step 3  in the proof of Proposition \ref{nag} is the same here and hence \eqref{claimb} is verified without appealing to any other further results from fluctuation theory.\smallskip

(i) In order that (A.3.1) holds in the present setting we need to verify that  
\begin{align*}
	\mu(dy) = \int_{[0,\infty)} \nu(da)G_{{{\hat{Z}^\uparrow}}}(a,dy)\quad \text{ on } \mathcal B(\R_+),
\end{align*}	
where $\nu= \delta_0$ and $G_{{{\hat{Z}^\uparrow}}}$ is the potential measure of $\hat{Z}^\uparrow$ and $\mu$ is the duality measure from \eqref{claimb}. We first  calculate  the potential measure $G_{\hat{X}^\uparrow}(0,dy)$ of ${\hat{X}^\uparrow}$ on $\mathcal B(\R)$. Using the expression for the entrance law of $X^\uparrow$ in Theorem 1 of   Bertoin and Yor \cite{BY02} (see also Remark 4 of Chaumont et al. \cite{CKPR}), Fubini's theorem and substitution, we calculate, for bounded and measurable $f\geq 0$, 
  \begin{align}\label{hilf2b}
 G_{{{\hat{X}^\uparrow}}}[f](0)&=\int_0^\infty \hat{\mathbb{E}}^\uparrow_0[f({\hat{X}^\uparrow}_t)]\,dt\notag\\
&=\frac{1}{\alpha {\bf E}[\xi^\uparrow_1]}\int_0^\infty {\bf E}\left[\hat{I}_\infty^{-1}f(-(t/{\hat{I}_\infty})^{1/\alpha})\right]\,dt\notag\\
&=\frac{1}{\Gamma(\alpha)}\int_\R f(x)x^{\alpha-1}\,dx\notag\\
&=\int_\R f(x)h(x)dx
 \end{align}
 with $h$ from \eqref{h} and $\hat{I}_\infty = \int_0^\infty e^{\alpha\hat{\xi}^\uparrow_s}ds$, where the L\'evy process $({\hat{\xi}}^\uparrow, {\bf P})$ is characterised by its exponent in \eqref{xiuparrowpsi}. Thus, $G_{{{\hat{X}^\uparrow}}}(0,dx)=  h(x){dx}$, $x\in\R$. 
 
 \smallskip
 
 Since ${\hat{Z}^\uparrow}$ is a time-change of $\hat{X}^\uparrow$, from the above we can easily compute the potential measure by change of variables, namely
 \begin{align}\label{nupot}
G_{{{\hat{Z}^\uparrow}}}(0, A)
&=\hat{\mathbb{E}}^\uparrow_0\left[\int_0^\infty \mathbf{1}_A({\hat{X}^\uparrow}_{\iota_t})\,dt\right]\notag\\
&=\hat{\mathbb{E}}^\uparrow_0\left[\int_0^\infty \mathbf{1}_A({\hat{X}^\uparrow}_t)\sigma({\hat{X}^\uparrow}_t)^{-\alpha}\,dt\right]\notag\\
&=\int_A \sigma(x)^{-\alpha}h(x)\,dx,\notag\\
&=\mu(A)
\end{align}
for bounded and open sets $A$. Hence, we obtain that $\int_{\mathbb{R}}\nu(dx)G_{{\hat{Z}^\uparrow}}(x, {dy})=\mu(dy)$ which is the condition \textbf{(A.3.1)} of Theorem \ref{Ndual}. Noting again that the other conditions of Nagasawa are trivially fulfilled since all processes involved have c\`adl\`ag  trajectories, the proof is now complete.\smallskip

(ii) In order that (A.3.1) holds in the present setting we need to verify that  
\begin{align*}
	\mu(dy) = \int_{\mathbb{R}}\nu({dx})G_{Z^\dagger}(x, {dy}) \quad \text{ on } \mathcal B(\R_+),
\end{align*}	
	 where  $\nu= \delta_{+\infty}$ and $G_{Z^\dagger}(x, {dy})$ is the potential measure of $Z^\dagger$ on $\mathcal B(\R_+)$ and $\mu$ is the duality measure from \eqref{claimb}. As in the proof of Proposition \ref{nag} (ii), it is straightforward to show that 
\begin{align}\label{hilfb}
	G_{Z^\dagger}(+\infty, A) = \lim_{x\to+\infty}G_{Z^\dagger}(x,A),\quad \forall A\in \mathcal B(\R_+).
\end{align}
Indeed, for $x>{L}$ or $x=+\infty$ and $A\subset[0,{L}]$ we have (recall that $X$ is assumed to only creep downwards) by the strong Markov property that
\begin{align*}
	G_{Z^\dagger}(x,A) = \int_{0}^{L}{\rm P}_x(Z_{T^{(-\infty,L]}}\in dz)G_{Z^\dagger}(z, A)= G_{Z^\dagger}({L},A),
\end{align*}
where $T^{(-\infty,L]}=\inf\{t\geq 0: Z_t\leq {L}\}=\inf\{t\geq 0: Z_t= {L}\}$. The claim \eqref{hilfb} now follows. 

Now we combine \eqref{hilfb}, the time-change in Theorem \ref{zthrm} and the Chaumont's transform in Theorem \ref{CRBZ}, to get, for bounded and open set $A$, 
\begin{align*}
G_{Z^\dagger}(+\infty, A)
&= \lim_{x\to\infty}\mathbb{E}_x\left[\int_0^\infty \mathbf{1}_A(X^\dagger_t)(X_t^\dagger)^{2\alpha} (X^\dagger_t)^{-2\alpha}\sigma(X^\dagger_t)^{-\alpha}dt\right]\\
&= \lim_{x\to\infty}\mathbb{E}_x\left[\int_0^\infty \mathbf{1}_A(X^\dagger_{\gamma_s})(X^\dagger_{\gamma_s})^{2\alpha} \sigma(X^\dagger_{\gamma_s})^{-\alpha}ds\right]\\
&=\lim_{x\to\infty}\hat{\mathbb{E}}^\uparrow_{1/x}\left[\int_0^\infty \mathbf{1}_A(1/X_{s})X_{s}^{-2\alpha}\sigma(1/X_s)^{-\alpha} ds\right]\\
&= G_{\hat{X}^\uparrow}[g](0),
\end{align*}
where $g(x) = \mathbf{1}_A(1/x)x^{-2\alpha}\sigma(1/x)^{-\alpha}$ and (as above) the continuity at the origin of $G_{\hat{X}^\uparrow}$ is a consequence of $0$ being an entrance boundary for $\hat X^\uparrow$, see Lemma \ref{zeroenter2}. The righthand side was already computed in \eqref{hilf2b}. Plugging it into the right-hand side above gives us  for bounded $A\in\mathcal{B}(\R)$, 
\begin{align*}
G_{Z^\dagger}(+\infty, A) 
&= \int \mathbf 1_A(1/x)x^{-2\alpha}\sigma(1/x)^{-\alpha}h(x){dx}\\
&= \int \mathbf 1_A(z)z^{2(\alpha-1)} \sigma(z)^{-\alpha}h(1/z){dz}\\
&=\int_A \sigma(z)^{-\alpha}h(z){dz},
\end{align*}
where in the final equality we used the explicit form of $h$ to obtain $z^{2(\alpha-1)}h(1/z) = h(z)$ for $z\neq 0$.
 We can now conclude that $G_{Z^\dagger}(+\infty, dy)  = \mu(dy)$ on $\mathbb{R_+}$ which is the condition \textbf{(A.3.1)} of Theorem \ref{Ndual}. The claim in part (ii) now follows from Theorem \ref{Ndual} of Nagasawa as before with the same slight adjustment mentioned in the final paragraph at the end of the proof of Proposition \ref{nag}.
\end{proof}

\section{Entrance from infinity, the impossible cases}\label{impossible}
This first section of the main proof gathers the cases where entrance from infinity is impossible irrespectively of $\sigma$, i.e. a cross appears in the table of Theorem \ref{main}. Recall that entrance stands for enterable but not exit. All proofs are indirect and based on the triviality of certain limiting hitting distributions (overshoots, inshoots) of stable processes for which explicit formulas are available.\smallskip

 Recall that for $x\in \R$, ${\rm P}_x$ denotes the law of the unique weak solution to the SDE \eqref{2} issued from $x$, $\P_x$ denotes the law of the stable process issued from $x\in\R$ and ${\rm P}_x$ can be expressed via the time-change \eqref{timechangesolution} in terms of $\P_x$. To study ${\rm P}$ for infinite entrance points we use the strong Markov property (consequence of Feller assumption) at first hitting times and then use Proposition \ref{pr} to obtain formulas in terms of the stable process. First hitting distributions under ${\rm P}_x$ are identical to those under $\P_x$ as the time-change does not influence the jump sizes. Note that, since $\sigma>0$ is assumed continuous, $\sigma$ is bounded away from zero within all compact sets. Hence, the time-change in \eqref{timechangesolution} does not level off in $\R$ so that solutions to the SDE \eqref{2} visit the same sets as the driving stable process.

\subsection{Entrance from $+\infty$, two-sided jumps, $\alpha\in(0,2)$}\label{proof0}
In this first proof we show that divergence of overshoots for stable L\'evy processes implies that under ${\rm P}_{+\infty}$ trajectories would jump instantaneously from $+\infty$ to $-\infty$ which contradicts continuous entry. We consider $\overline{\R}=(-\infty,+\infty]$ and assume $({\rm P}_x,{x\in\bar \R})$ is a Feller extension of $({\rm P}_x, x\in\R)$, satisfying ${\rm P}_{+\infty}(\lim_{t\downarrow 0} Z_t=+\infty)=1$. 
Recall that the Feller property of the extension implies the strong Markov property which we apply to the first hitting times $T^{(-\infty,L]}=\inf\{t\geq 0: Z_t\leq {L}\}$ for ${L}\in \R$.

\smallskip

Using the time-change representation \eqref{timechangesolution}, we find that, for all ${{L}}\in \N$ and compact sets $A\subset \R$,
\begin{align}\label{ab}
	\lim_{z\to+\infty}{\rm P}_z(Z_{T^{{(-\infty,L]}}}\in A)= \lim_{z\to+\infty}\P_z(X_{\tau^{(-\infty,{{L}}]}}\in A)=0,
\end{align} 
where $\tau^{(-\infty,{{L}}]} = \inf\{t<0: X_t\leq L\}$ and we have used that the ranges of $Z$ and $X$ agree and the fact  that 
$X$ has no stationary overshoots (recall the discussion around \eqref{C2}).
This last claim can be verified directly by recalling the classical result which states that 
\begin{align}\label{leif}
\P_z(X_{\tau^{(-\infty,{{L}}]}}\leq {{L}}-y) = \frac{\sin(\pi\alpha\rhohat)}{\pi}\int_0^{y/(z-{{L}})}t^{-\alpha\rho}(1+t)^{-1}dt, \qquad z\leq {{L}}.
\end{align}
See for example Equation (2) of  Rogozin \cite{Rog} for the above formula.
For bounded and  measurable $A$, define the auxiliary function
\begin{align*}
	f(z)=\begin{cases}
		{\rm P}_z(Z_{T^{{(-\infty,L]}}}\in A)&\text{  if }z\in\R,\\
		0&\text{  if }z=+\infty,
	\end{cases}
\end{align*}
so that  $0\leq f\leq 1$. Thanks to \eqref{ab} and the explicit overshoot distribution \eqref{leif},  $f$ is continuous on $\overline{\R}$.
Hence, for every $\epsilon>0$, there is some ${{L}}$ so that $0\leq f(z)\leq \epsilon$ for all $z>{{L}}$. Applying the strong Markov property at $T^{(-\infty,L']}$ for ${{L}'}>{{L}}$ gives 
\begin{align*}
	{\rm P}_{+\infty}(Z_{T^{{L}}}\in A)
	&=\lim_{{{L}'}\to +\infty} \int {\rm P}_y(Z_{T^{{L}}}\in A)\, {\rm P}_{+\infty} (Z_{T^{(-\infty,L']}}\in dy)\\
	&=\lim_{{{L}'}\to +\infty}\left( \int_{y>{{L}}} f(y)\, {\rm P}_{+\infty} (Z_{T^{(-\infty,L']}}\in dy)+\int_{y\leq {{L}}} f(y)\, {\rm P}_{+\infty} (Z_{T^{(-\infty,L']}}\in dy)\right)\\
	&\leq \epsilon+\lim_{{{L}'}\to +\infty}{\rm P}_{+\infty} (Z_{T^{(-\infty,L']}}\leq {{L}})\\
	&=\epsilon,
\end{align*}
where the final equality follows since trajectories enter from infinity continuously by assumption. Hence, $\lim_{{L}'\to \infty}{\rm P}_{+\infty}(Z_{T^{(-\infty,L']}}\in A)=0$ for every bounded and measurable subsets $A$ of $\R$ which implies that under ${\rm P}_{+\infty}$ no compact subset of $\R$ is visited.
\subsection{Entrance from $-\infty$, two-sided jumps, $\alpha\in(0,2)$}\label{proof1}
The proof follows the same lines as before with $\underline{\R}=[-\infty,+\infty)$, replacing $T^{(-\infty,L]}$ by $T^{[L,\infty)}=\inf\{t\geq 0: Z_t\geq {L}\}$ and using, for all ${{L}}\in \N$ and $A\subset [-{{L}},{{L}}]$, the continuous function
\begin{align*}
	f(z)=\begin{cases}
		{\rm P}_z(Z_{T^{[L,\infty)}}\in A)&\text{  if }z\in\R,\\
		0&\text{  if }z=-\infty,
	\end{cases}
\end{align*}
with analogous formulas forcing an instantaneous jump from $-\infty$ to $+\infty$

\subsection{Entrance from $\pm \infty$, two-sided jumps,  $\alpha\in(0,1)$}\label{proof2}
The proof follows the same idea as in Section \ref{proof0} replacing overshoots by `inshoots' into compact intervals and then using that transience of stable processes for $\alpha\in(0,1)$ does not allow to reach arbitrary compact sets from infinity.

\smallskip

The differences are the use of first hitting times $T^{(-{L},{L})}=\inf\{t\geq 0: Z_t\in (-{L},{L})\}$, the auxiliary function
\begin{align*}
	f(z)=\begin{cases}
		{\rm P}_z(Z_{T^{(-L,L)}}\in A)&\text{  if }z\in\R,\\
		0&\text{  if }|z|=\pm\infty,
	\end{cases}
\end{align*}
on $\overline{\underline \R}$ and the argument for continuity of $f$. Here, $f$ is continuous in the interior of $\overline{\underline \R}$ due to the explicit form of ${\rm P}_z(Z_{T^{(-L,L)}}\in A) = \mathbb{P}_z(X_{\tau^{(-{{L}},{{L}})}}\in A) = \mathbb{P}_{z/{{L}}}(X_{\tau^{(-1,1)}}\in A/{{L}})$ given in Theorem 1.1 of Kyprianou et al. \cite{KPW}.  Specifically, it says that, for $\alpha\in(0,1)$,
\begin{align}
&\p_{x}\bigl(X_{\tau^{(-1,1)}} \in  d y\bigr)
= \frac{\sin(\pi\alpha\rhohat)}{\pi}
    (1+x)^{\alpha\rho}(1+y)^{-\alpha\rho}
    (x-1)^{\alpha\rhohat}
    (1-y)^{-\alpha\rhohat}
    (x-y)^{-1} d y.
    \label{striplowalpha}
\end{align}
Continuity of $f$ at $\pm\infty$ is due to the transience of stable L\'evy processes for $\alpha\in(0,1)$, so that, using again the time-change representation \eqref{timechangesolution}, $\lim_{|z|\to\infty} {\rm P}_z(Z_{T^{(-L,L)}}\in A)=0$ which implies that under ${\rm P}_{\pm\infty}$ no compact subset of $\R$ is visited.

\subsection{Entrance from $\pm \infty$ or $-\infty$, spectrally positive jumps, $\alpha\in(0,2)$}\label{proof4} 

First note that spectral positivity excludes the case that $\alpha = 1$ (which is necessarily symmetric). We therefore only need to deal with the cases $\alpha\in(0,1)\cup(1,2)$.

\smallskip

On account of the fact that we know the law of the overshoot of $X$ into $({L},\infty)$, see e.g. again Rogozin \cite{Rog},  we can apply a similar argument to the one in \eqref{ab} and deduce that
$
	\lim_{z\to-\infty}{\rm P}_z(Z_{T^{[L,\infty)}}\in A)=0
$
for all compact sets $A$ so that entrance from $-\infty$ is impossible.

\smallskip

Next, we consider the limit of ${\rm P}_z(Z_{T^{(-L,L)} }\in A)$ as $|z|\to\infty$ for all compact sets $A$.
When $\alpha\in(0,1)$, the process $X$ is a subordinator and hence the paths of $Z$ are monotone increasing. Therefore the aforesaid limit does not exist.
On the other hand, when $\alpha>1$, we can appeal to the spectrally positive analogue of \eqref{striplowalpha}, see Proposition 1.3 of \cite{KPW} or \cite{Port67}. This tells us that, for $z<-1$,
\begin{align*}
  \p_z(X_{\tau^{(-1,1)}} \in d y)&= \frac{\sin \pi(\alpha-1)}{\pi}
  (|z|-1)^{\alpha-1} (1+y)^{1-\alpha} (|z|+y)^{-1} \dd y \\
  & \quad +\delta_{-1}(\dd y) \frac{\sin \pi(\alpha-1)}{\pi}
  \int_0^{\frac{|z|-1}{|z|+1}} t^{\alpha-2} (1-t)^{1-\alpha} \, d t,
\end{align*}
and $\p_z(X_{\tau^{(-1,1)}} =1) = 1$ for $z>1$ (positive jumps).
With the help of scaling, it is therefore clear that limits of ${\rm P}_z(Z_{T^{(-L,L)}}\in A) = \p_z(X_{\tau^{(-{L},{L})}} \in A)$ do not exist. Indeed, one need only compare the probabilities  ${\rm P}_z(Z_{T^{(-L,L)}}={L}) $ as $z\to\infty$ and $z\to-\infty$.

\subsection{Entrance from $\pm \infty$ or $+\infty$, spectrally negative jumps, $\alpha\in(0,2)$}\label{proof6}
The proof is analogous to the one above.

\subsection{Entrance from $+\infty$, spectrally positive jumps, $\alpha\in(0,1)$}\label{proof4.5}
By virtue of the increasing nature of the paths in this setting, entrance at $+\infty$ is trivially impossible.

\subsection{Entrance from $-\infty$, spectrally negative jumps, $\alpha\in(0,1)$}\label{proof8}
By virtue of the decreasing nature of the paths in this setting, entrance at $+\infty$ is trivially impossible.

\section{Entrance from $\pm\infty$, two-sided jumps, $\alpha\in(1,2)$}\label{proof3}
In this section we discuss the main arguments of the article for which we have seen significant   preparation in the earlier sections. Proofs of Section \ref{proof5} go along the lines.\smallskip

We break the proof into necessity and sufficiency of the integral test	
\begin{align}\label{integral}
	I^{\sigma,\alpha}(\R)=\int_\R \sigma(x)^{-\alpha}|x|^{\alpha-1}\,{dx}<\infty
\end{align}	
for $\pm\infty$ as an entrance point.\smallskip

\textit{Idea for necessity:} Suppose solutions enter from infinity. Since for $\alpha>1$ solutions will hit the origin almost surely (as they are time-changes of the stable process which hits points) we can time-reverse at the first hitting time of $0${\color{black}; see Figure \ref{fig2}}. From Proposition \ref{nag} we know the dynamics of the reversed process. It is a time-change under $\hat{\mathbb{P}}^\circ_{0}$, the stable process conditioned to avoid $0$. Since the conditioned process itself is conservative, necessarily the time-change \eqref{tauhat} needs to explode under $\hat{\mathbb{P}}^\circ_{0}$. Recall $\hat{\mathbb{P}}^\circ_{0}$ is well-defined due to Lemma \ref{zeroenter}. Hence, we obtain the necessity of an almost surely finite perpetual integral under $\hat{\mathbb{P}}^\circ_{0}$. Since the conditioned process is a self-similar Markov process we can employ the Lamperti representation for the positive part and the negative part (alternatively the Lamperti--Kiu transformation to the entire process) to get two almost surely finite perpetual integrals over two L\'evy processes with positive finite means and local times. For such perpetual integrals we can employ the article \cite{DK} to obtain an integral test which gives \eqref{integral}.\smallskip

\begin{figure}[h]
        \includegraphics[scale=0.5]{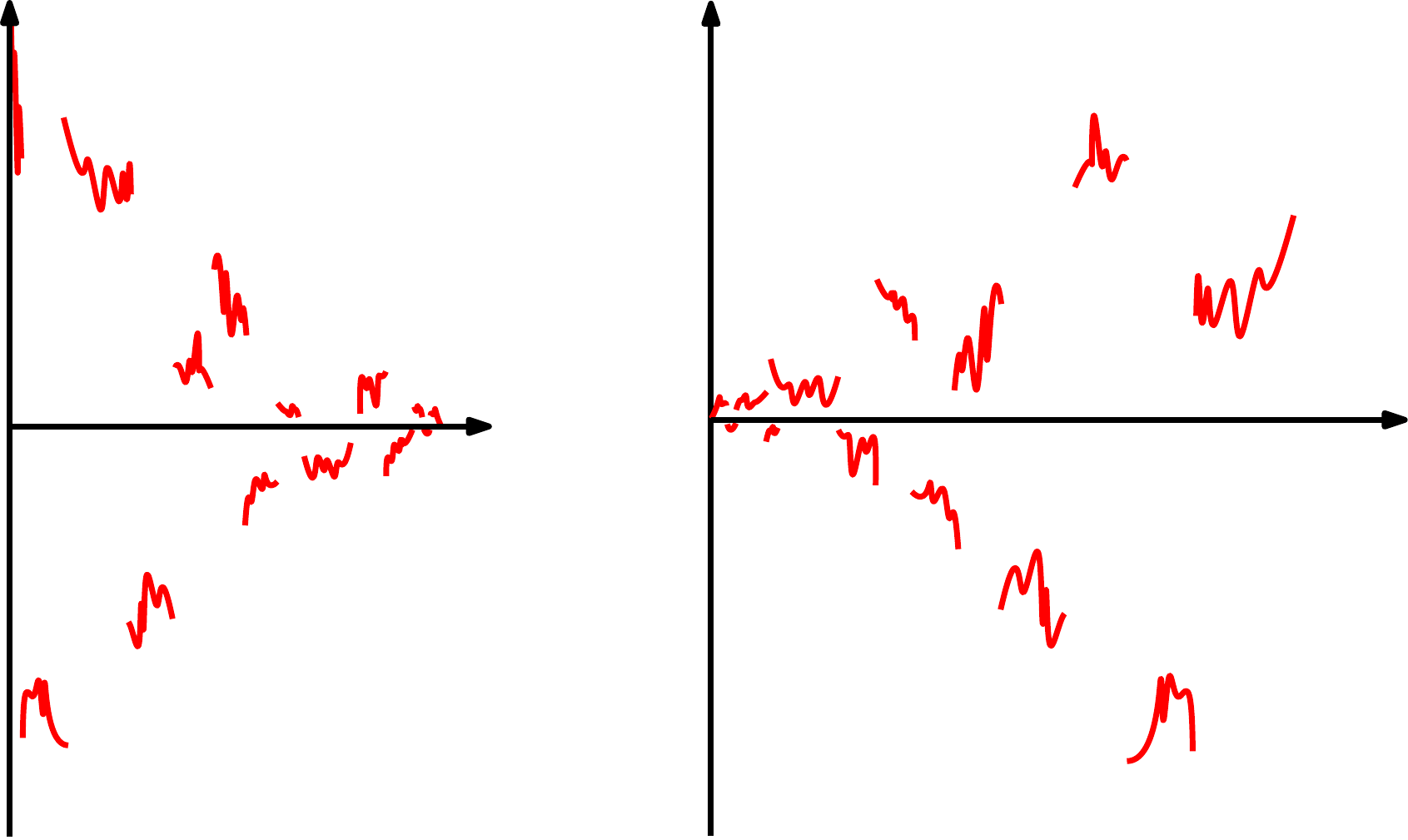}
\caption{Time-reversing SDE entering at $\pm\infty$ to give time-change of $h$-transform $\hat{X}^\circ$ entering at $0$.}
\label{fig2}
\end{figure}

\textit{Proof of necessity:} Let us assume $\pm\infty$ is an entrance point for the SDE \eqref{2} in the sense of Definition \ref{Fellerdef}.  Necessarily we must have under $\emph{\rm P}_{\pm\infty}$ that $T^{(-{L},{L})} = \inf\{t>0: |Z_t|<{L}\}<\infty$ with positive probability for some ${L} >0$ and that this probability tends to 1 as ${L}\to\infty$. From Proposition \ref{pr} and the recurrence of stable processes for $\alpha\in (1,2)$, we also know that $\zeta=\inf\{t>0: Z_t = 0\}$ is almost surely finite when $Z$ is issued from any point in $\R$ (this uses the assumption that $\sigma$ is positive and continuous, hence, the time-change cannot level off in $\R$). It follows by the strong Markov property that the first hitting time of zero $\zeta$ is finite almost surely under $\emph{\rm P}_{\pm\infty}$. We also note that $\zeta$ is an $L$-time for the SDE killed at $0$. Hence, we will consider the time-reversal under $\emph{\rm P}_{\pm\infty}$ from $\texttt{k} =\zeta$. 

\smallskip

As we have assumed that $\pm\infty$ is an entrance point for $\emph{\rm P}_{\pm\infty}$, Proposition \ref{nag} (ii) tells us that $\pm\infty$ is accessible in an almost surely finite time for $\hat{Z}^\circ$, where $\hat{Z}^\circ_ t = {{\hat{X}^\circ}}_{\iota_t}$ with $\hat{Z}^\circ_0 = 0$. The conservative process  ${{\hat{X}^\circ}}$ has probabilities  $\hat{\mathbb{P}}^\circ_x$, $x\in \mathbb{R}$,  and the time-change $\iota$ is given by 
\begin{align*}
	\iota_t = \inf\left\{s>0 : \int_0^s \sigma({{\hat{X}^\circ}}_s)^{-\alpha}ds > t\right\}, \qquad t<\int_0^\infty \sigma({{\hat{X}^\circ}}_s)^{-\alpha}ds.
\end{align*}
The finite-time accessibility of $\pm\infty$ for $\hat{Z}^\circ$ and the fact that $\hat{X}^\circ$ is a conservative process implies that 
the time-change has to explode in finite time or, equivalently,
\begin{align}\label{prove this}
	\int_0^\infty \sigma({{\hat{X}^\circ}}_s)^{-\alpha}ds<\infty
\end{align}
 almost surely under $\hat{\mathbb{P}}^\circ_{0}$. The first exit time of $\hat{X}^\circ$ from $(-\varepsilon,\varepsilon)$, for any $\varepsilon>0$, occurs before $\hat{X}^\circ$ reaches $\pm\infty$. Moreover, appealing to  \eqref{2sideexit} in combination with the  $h$-transform that defines $\hat{\mathbb{P}}^\circ_{0}$, it is clear that the law of the overshoot of $\hat{\mathbb{P}}^\circ_{0}$ outside of $(-\varepsilon, \varepsilon)$ is absolutely continuous with respect to Lebesgue measure. Hence, it follows that \eqref{prove this} is almost surely finite under $\hat{\mathbb{P}}^\circ_{x}$, for Lebesgue almost every  $x\in\mathbb{R}$. In what follows we continue with two such $x>0$ and $x<0$.\smallskip 
\smallskip

To show that the necessary almost sure finiteness in \eqref{prove this} implies  the finiteness of the integral test  \eqref{integral}, we need to introduce a path transformation of $\hat{X}^\circ$. We note that in the spirit of the example in Section \ref{pssmpexamples}, we can censor out the negative parts of the path of $\hat{X}^\circ$ to create a positive self-similar Markov process, say $\hat{X}^{\circ\hspace{-1pt}>}$. That is to say 
\begin{align}
	\hat{X}^{\circ\hspace{-1pt}>}_t = \hat{X}^\circ_{\hat{\gamma}^\circ_t},\quad \text{ with } \hat{\gamma}^\circ_t = \inf\left\{s>0 : \int_0^s \mathbf{1}_{(\hat{X}^\circ_u<0)}du>t\right\}.
	\label{Ycen}
\end{align}
Let us write $\hat{\xi}^{\circ\hspace{-1pt}>}$ for the L\'evy process appearing in Lamperti's representation \eqref{pssMpLamperti} of $\hat{X}^{\circ\hspace{-1pt}>}$.
The finiteness of \eqref{prove this} implies the almost sure finiteness of  the integrals
\begin{align}\label{censored}
\int_0^\infty \sigma({{\hat{X}^\circ}}_t)^{-\alpha}\mathbf{1}_{({{\hat{X}^\circ}}_t>0)}{dt} &= \int_0^\infty  \sigma({{\hat{X}^{\circ\hspace{-1pt}>}}}_s)^{-\alpha} {ds}\notag\\
&=\int_0^\infty  \sigma( e^{\hat{\xi}^{\circ\hspace{-1pt}>}_{\hat\varphi_u}})^{-\alpha} {du}\notag\\
&=\int_0^\infty  \sigma( e^{\hat{\xi}^{\circ\hspace{-1pt}>}_{v}})^{-\alpha}  e^{\alpha\hat{\xi}^{\circ\hspace{-1pt}>}_{v}}{dv}.
\end{align}
To the (almost surely finite) righthand side we will apply \cite{DK} to obtain the integral test \eqref{integral}. The result of \cite{DK} that we apply states the following: If $\xi$ is a L\'evy process with local times and finite positive mean, then
\begin{align*}
	\texttt{P}\left(\int_0^\infty f(\xi_s)\,ds<\infty\right)=1\quad \Longleftrightarrow \quad \int_0^\infty f(x)\,dx<\infty.
\end{align*}
We will now check that $\hat{\xi}^{\circ\hspace{-1pt}>}$ has local times (equivalently: $\hat{\xi}^{\circ\hspace{-1pt}>}$ hits points, compare for instance Theorem 7.12 of \cite{Kbook} and Theorem V.1 of \cite{bertoin}) and finite positive mean.\smallskip

(i) {\it Local times}. Note that, for the stable process, as $\alpha\in(1,2)$, we have $\hat{\mathbb{P}}_x(\tau^{\{y\}}<\infty)=1$ for all $x,y\in\R$, where $\tau^{\{y\}}= \inf\{t>0: X_t = y\}$. It follows from \eqref{updownCOM} (albeit with $X$ replaced by $X^\dagger$) that $\hat{\mathbb{P}}^\circ_x(\hat{\tau}_\circ^{\{y\}}<\infty)>0$ for all $x,y\in\R$, where $\hat{\tau}_\circ^{\{y\}} = \inf\{t>0: \hat{X}^\circ_t = y\}$. But then the censored processes hit points (same range) and also the L\'evy processes through the Lamperti transformation hit points (exponential change of space, time-change irrelevant). Hence, $\hat{\xi}^{\circ\hspace{-1pt}>}$ has local times.\smallskip

(ii) {\it Finite positive mean}. We can derive the characteristic exponent of $\hat{\xi}^{\circ\hspace{-1pt}>}$ from the characteristic exponent of, say $\hat\xi^{>}$, the L\'evy process that lies behind the stable process $\hat{X}^\dagger$, which has been negatively censored. Indeed, from \eqref{censoredpsi}, its characteristic exponent takes the form 
\begin{equation}
\hat{\Psi}^{>}(z) = \frac{\Gamma(\alpha\rhohat - \iu{z})}{\Gamma(-\iu{z})}
    \frac{\Gamma(1 - \alpha\rhohat + \iu{z})}{\Gamma(1 - \alpha + \iu{z})}, \qquad z\in \R.
    \label{censoredpsi2}
\end{equation}
On account of the fact that, for $t\geq 0$ fixed, $\omega\mapsto  \inf\left\{s>0 : \int_0^s \mathbf{1}_{(\omega_u<0)}du>t\right\}$ is a sequence of almost surely finite stopping times under $\hat{\mathbb{P}}_x$, $x\neq 0$,  as well as the same being true of the time-change in the Lamperti transform \eqref{pssMpLamperti} for the process $\hat{X}^{\circ\hspace{-1pt}>}$,
the Doob $h$-transform that defines $\hat{X}^\circ$ is tantamount to an Esscher transform (exponential change of measure) on $\hat\xi^{>}$. In particular, note that $\Psi_{\hat\xi^{>}}(-\iu(\alpha-1))=0$ and $\exp((\alpha-1)\hat\xi^{>}_t)$, $t\geq0$, is a $\hat{\mathbb{P}}$-martingale. It follows that the characteristic exponent of $\hat{\xi}^{\circ\hspace{-1pt}>}$ takes the form
\begin{equation}
\hat{\Psi}^{\circ\hspace{-1pt}>}(z) = \frac{\Gamma(1-\alpha\rho - \iu{z})}{\Gamma(1-\alpha -\iu{z})}
    \frac{\Gamma(\alpha\rho + \iu{z})}{\Gamma(\iu{z})} , \qquad z\in \R.
    \label{hypexp}
\end{equation}
By computing  $-{\rm i} \Psi'_{\hat{\xi}^{\circ\hspace{-1pt}>}}(0)$ we can verify directly that the mean of $\hat{\xi}^{\circ\hspace{-1pt}>}_1$ is finite.\smallskip

With local times and finite positive mean we apply Theorem 1 of \cite{DK} for which the starting value of $\hat{\xi}^{\circ\hspace{-1pt}>}$ is irrelevant. This tells us that 
\[
\int_0^\infty  \sigma( e^{\hat{\xi}^{\circ\hspace{-1pt}>}_{v}})^{-\alpha}  e^{\alpha\hat{\xi}^{\circ\hspace{-1pt}>}_{v}}{dv}<\infty\,\,\,\text{a.s.}
\quad\Longleftrightarrow\quad \int_0^\infty \sigma( e^y)^{-\alpha}  e^{\alpha y}{dy} = \int_1^\infty \sigma(x)^{-\alpha} x^{\alpha-1}dx<\infty.
\]
The analogous argument in which we censor away the positive parts of $\hat{X}^\circ$ (the negative of this censored process is a pssMp)  shows that 
\[
\int_0^\infty \sigma({{\hat{X}^\circ}}_t)^{-\alpha}\mathbf{1}_{({{\hat{X}^\circ}}_t<0)}{dt}<\infty\,\,\,\text{a.s.} \quad\Longleftrightarrow \quad\int_{-\infty}^0\sigma(x)^{-\alpha}|x|^{\alpha-1}{dx}=\infty.
\]
We thus conclude that 
\[
\int_0^\infty \sigma({{\hat{X}^\circ}}_t)^{-\alpha}{dt} <\infty \,\,\,\text{a.s.}\quad \Longleftrightarrow \quad\int_{|x|>1}\sigma(x)^{-\alpha}|x|^{\alpha-1}{dx}<\infty.
\]
The integral test \eqref{integral} thus follows from \eqref{prove this}.

\subsection*{Idea for sufficiency} From Proposition \ref{prop} we know that the SDE started from $x$ with law $\emph{\rm P}_{x}$ can be built under spatial-inversion ($x\mapsto 1/x$) as a time-change of the h-transformed (conditioned) process $\hat{X}^\circ$ started in $1/x$. The natural guess is to construct $\emph{\rm P}_{\pm\infty}$ as spatial inversion of the same time-change of $\hat{X}^\circ$ started from $0$. Two facts need to be established: the limit law $\hat{\mathbb{P}}^\circ_0=\lim_{x\to 0}\hat{\mathbb{P}}^\circ_x$ needs to be well-defined and the time-change needs to be well-defined under $\hat{\mathbb{P}}^\circ_0$. The first follows from \cite{DDK} as explained in Section \ref{sec:rssMp}, the latter by computing the expectation of the time-change which leads to the integral test \eqref{integral}. Finally, we show that the semigroup extension defined like this is indeed a Feller extension of $(\emph{\rm P}_{x}:x\in\R)$ to $\overline{\underline \R}$. Since $\emph{\rm P}_{\pm\infty}$ is constructed explicitly through space-inversion and time-change from $\hat{\mathbb{P}}^\circ_0$, under which trajectories leave $0$ continuously, we see immediately that under $\emph{\rm P}_{\pm\infty}$ paths almost surely start from infinity continuously{\color{black}; see Figure \ref{fig3}}.

\begin{figure}[h]
                \includegraphics[scale=0.5]{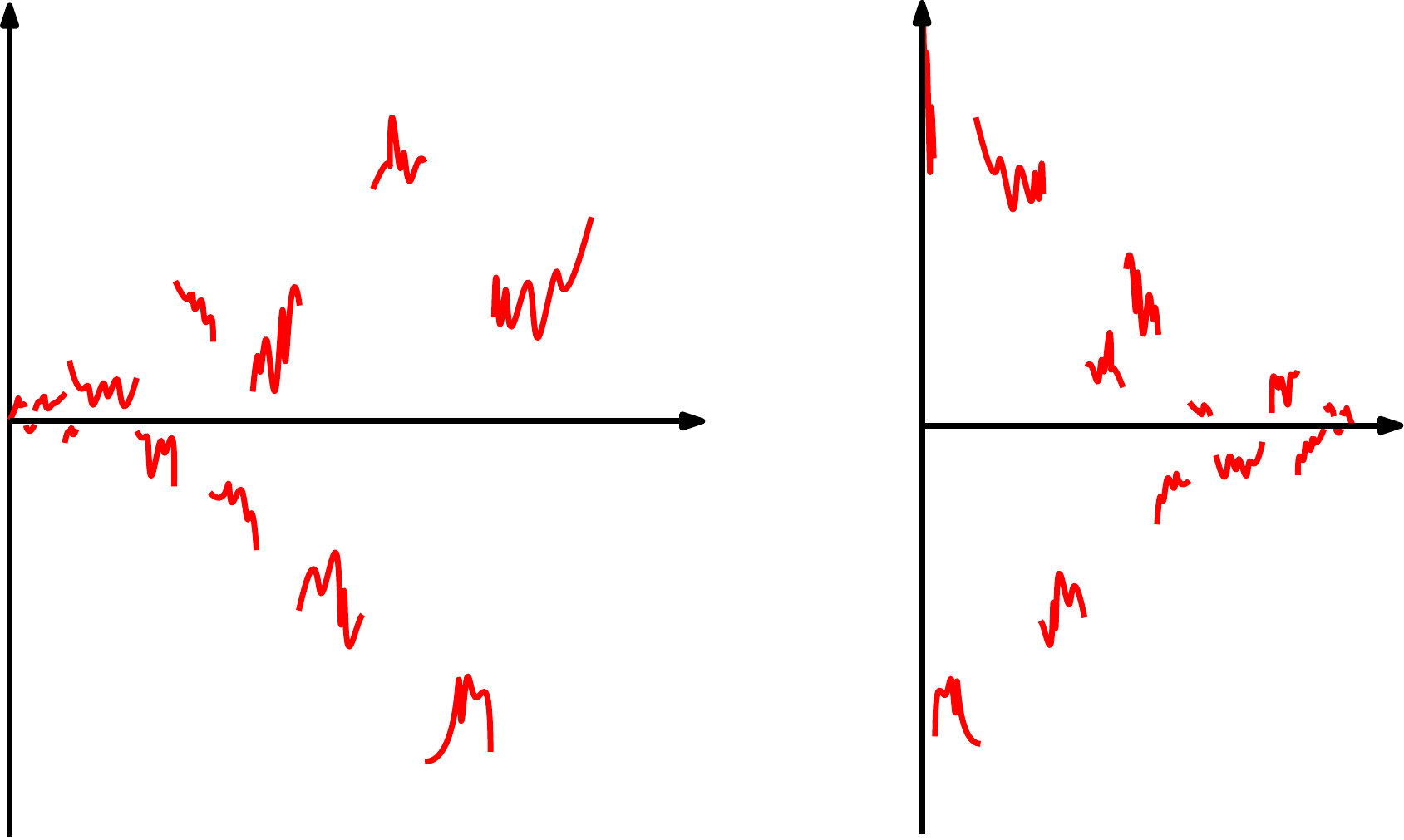}
  \caption{Space inversion and time-change of $h$-transform entrance law $\hat{\mathbb{P}}^\circ_0$ to give SDE started at $\pm\infty$.}
  \label{fig3}
\end{figure}
\subsection*{Proof of sufficiency} Suppose the integral test \eqref{integral} is satisfied. We first use \eqref{integral} to prove that 
\begin{align}\label{abc}
	\hat{\mathbb{E}}^\circ_0\left[ \int_0^\infty\beta(\hat{X}^\circ_u){du}\right] <\infty
\end{align}
with $\beta(x)=\sigma(1/x)^{-\alpha}|x|^{-2\alpha}$ for $x\neq 0$. 

\smallskip

Recalling that when $\hat{X}^\circ$ is negatively censored as in \eqref{Ycen}, as a positive self-similar Markov process, thanks to the type of underlying L\'evy process described in \eqref{hypexp}, the origin is left instantaneously and not hit again; see for example the discussion in \cite{CKPR}. A similar statement holds when $\hat{X}^\circ$ is positively censored. It follows that under $\hat{\mathbb{P}}^\circ_0$, the origin is left instantaneously and $0$ is not hit again, thus, the integral is well-defined 
but possibly infinite.\smallskip

 To prove \eqref{abc}, note that, for each fixed $t>0$, $\omega\mapsto \int_0^t \beta(\omega_s){\rm d}s$ is a continuous functional in the Skorohod topology. Using that $x\mapsto \mathbb P^\circ_x$ is weakly continuous, Fatou's Lemma and $\beta \geq 0$, we first get
\begin{align}\label{NAnnoying}
	\hat{\mathbb{E}}^\circ_0\left[ \int_0^t\beta(\hat{X}^\circ_u){du}\right]\leq\lim_{|x|\to0}\hat{\mathbb{E}}^\circ_x\left[ \int_0^t\beta(\hat{X}^\circ_u){du}\right]<\lim_{|x|\to0}\hat{\mathbb{E}}^\circ_x\left[ \int_0^\infty\beta(\hat{X}^\circ_u){du}\right]
\end{align}
for all $t\geq 0$. Hence, to prove \eqref{abc} we show that the righthand side of \eqref{NAnnoying} is finite. Recalling that $\hat{X}^\circ$ is an $h$-transform of $X^\dagger$, using $\hat{h}$ defined as in \eqref{h} albeit the roles of $\rho$ and $\hat\rho$ are interchanged,   and the general $h$-transform formula for potential measures `$G^h(x,dy)={h(y)}G(x,dy)/{h(x)} $' yields
\begin{align}\label{takelimitx}
\begin{split}
\hat{\mathbb{E}}^\circ_x\left[ \int_0^\infty\beta(\hat{X}^\circ_u){du}\right] & = \int_{\mathbb{R}}G_{\hat{X}^\circ}(x,{dy})\sigma(1/y)^{-\alpha}|y|^{-2\alpha}\\
&=\int_{\mathbb{R}}G_{\hat{X}^\dagger}(x,{dy})\frac{\hat{h}(y)}{\hat{h}(x)}\sigma(1/y)^{-\alpha}|y|^{-2\alpha}.
\end{split}
\end{align}
  In order to take the limit in \eqref{takelimitx} as $|x|\to0$, we can appeal to the expression for $G_{X^\dagger}(x,{dy})$. Recall from \eqref{daggerpot} that  $G_{X^\dagger}(x,{dy})$ has a density
\begin{align}
g_{X^\dagger}(x,y) &=-\frac{\Gamma(1-\alpha)}{\pi^2}\left(|y|^{\alpha-1}s(y)
 - |y-x|^{\alpha-1} s(y-x) +|x|^{\alpha-1}s(-x)\right),
 \label{daggerpot2}
\end{align}
where 
$
s(x) =  \sin(\pi\alpha\rho)\mathbf{1}_{(x>0)} +   \sin(\pi\alpha\rhohat)\mathbf{1}_{(x<0)}$.  It was also noted there that, following classical potential theory (see also Theorem 6.5 of \cite{G}), 
\begin{align}
\frac{|y|^{\alpha-1}s(y)- |y-x|^{\alpha-1} s(y-x) +|x|^{\alpha-1}s(-x)}{|y|^{\alpha-1}(s(y)+ s(-y))}
=\frac{g_{X^\dagger}(x,y)}{g_{X^\dagger}(y,y)}
=\mathbb{P}_x(\tau^{\{y\}}<\tau^{\{0\}})\leq 1,
\label{proba}
\end{align}
for $\tau^{\{y\}} = \inf\{t>0: X_t = y\}$. Using the assumption that
\begin{align*}
	\int_{\R}\sigma(1/y)^{-\alpha}|y|^{-\alpha -1}{dy}= \int_{\R}\sigma(z)^{-\alpha}|z|^{\alpha-1}{dz}<\infty
\end{align*}	
and $\alpha\in(1,2)$ together with \eqref{daggerpot2} and \eqref{proba}, we compute, with a floating unimportant constant $C$,
which can take different values in each line, 
\begin{align*}
& \lim_{|x|\to 0}-\frac{\pi^2}{\Gamma(1-\alpha)}\hat{\mathbb{E}}^\circ_x\left[ \int_0^\infty\beta(\hat{X}^\circ_u){du}\right] \\
&=	\lim_{|x|\to0}-\frac{\pi^2}{\Gamma(1-\alpha)}\int_{\mathbb{R}}G_{X^\dagger}(x,{dy})\frac{h(y)}{h(x)}\sigma(1/y)^{-\alpha}|y|^{-2\alpha}\notag\\
&=\lim_{|x|\to0}\int_{\R} \frac{s(-y)\left(|y|^{\alpha-1}s(y) - |y-x|^{\alpha-1} s(y-x) +|x|^{\alpha-1}s(-x)\right)}{s(-x)|x|^{\alpha -1}}\frac{\sigma(1/y)^{-\alpha}}{|y|^{\alpha + 1}}{dy}\notag\\
 &\leq C  \int_{\R}\lim_{|x|\to0} \frac{\left(|y|^{\alpha-1}s(y)
 - |y-x|^{\alpha-1} s(y-x) +|x|^{\alpha-1}s(-x)\right)}{|x|^{\alpha -1}}\frac{\sigma(1/y)^{-\alpha}}{|y|^{\alpha+1}}{dy}\notag\\
 &\leq  C\int_{\R}\lim_{|x|\to0}|x|^{2-\alpha}{\rm sign}(x)\frac{
\left(|y|^{\alpha-1} - |y-x|^{\alpha-1}  \right)}{x}\frac{1}{|y|^{\alpha+1}}\sigma(1/y)^{-\alpha}{dy}\notag\\
&\quad+C\int_{\R}\frac{1}{|y|^{\alpha+1}}\sigma(1/y)^{-\alpha}{dy}\notag\\
&=C\int_{\R}{|z|^{\alpha-1}}\sigma(z)^{-\alpha}{dz}\notag\\
&<\infty,
\end{align*}
where in the first inequality we have used dominated convergence in combination with \eqref{proba} and the righthand side was assumed to be finite. Hence, \eqref{abc} is verified.\medskip

Now we come to the crucial step. We write down explicitly the process that plays the role of the SDE \eqref{2} started from infinity. First, note that \eqref{abc} implies that $\hat{\mathbb{P}}^\circ_0$-almost surely $\int_0^\infty\beta(\hat{X}^\circ_u){du}<\infty$. In turn, this implies that 
the time-change $(\theta_t, t\geq 0)$, in Propostion \ref{prop} (i) explodes in finite time.
Moreover, on account of the fact that $(\hat{X}^\circ,\hat{\mathbb{P}}_0^\circ)$ is well-defined, cf. Lemma \ref{zeroenter},  the space-time transformation
\begin{align}
	{Z}^\dagger_t=\frac{1}{\hat{X}^\circ_{\theta_t}}, \qquad t< \int_0^\infty \beta(\hat{X}^\circ_u){\,d u},
	\label{RBZ4}
\end{align}
where 
\[
\theta_t =\inf\left\{s> 0 : \int_0^s\beta(\hat{X}^\circ_u){\,d u}>t\right\}
\]
is well-defined under $\hat{\mathbb{P}}^\circ_0$.

\smallskip

  Given the conclusion of Proposition \ref{prop} (i), it thus follows that we have constructed 
a candidate for the Feller extension of $({\rm P}_z, z\in \R)$ with ${\rm P}_{\pm\infty}$ defined as \eqref{RBZ4} under $\hat{\mathbb P}^\circ_0$. Note that trajectories enter instantaneously with alternations between $+\infty$ and $-\infty$ as trajectories under  $\hat{\mathbb P}^\circ_0$ leave $0$ instantaneously with alternations of sign. We still need to verify, for the extension at $\pm\infty$, the weak continuity of $({\rm P}_z, z\in \underline{\overline{\R}})$ in the Skorokhod topology and the Feller property. Note, the latter  means that,
for continuous and bounded $f$  on $\underline{\overline{\R}}$, we need
\begin{align}
\lim_{|x|\to \infty}{\rm E}_x[f(Z^\dagger_t)] ={\rm E}_{\pm\infty}[f(Z^\dagger_t)]\qquad \text{ and }\qquad \lim_{t\to 0} {\rm E}_{\pm\infty}[f(Z^\dagger_t)] = f(\pm\infty).
\label{Felleratpminf}
\end{align}

Propostion \ref{prop} and the definition of ${\rm P}_{\pm\infty}$ allows us to equivalently write \eqref{Felleratpminf} as

\begin{equation}
 \lim_{|x|\to \infty}\hat{\mathbb{E}}^\circ_x[f(1/\hat{X}^\circ_{\theta_t})]= \hat{\mathbb{E}}^\circ_0[f(1/\hat{X}^\circ_{\theta_t})]\qquad \text{ and }\qquad \lim_{t\to 0} \hat{\mathbb{E}}^\circ_0[f(1/\hat{X}^\circ_{\theta_t})] = f(\pm\infty).
 \label{Felleratpminf2}
\end{equation}

Thanks to  \eqref{abc}  and the continuity  of composition, first hitting and $\theta$ with respect to the Skorokhod topology for sufficiently regular processes, cf. Chapter 13 of Whitt \cite{Wh}, the weak continuity and the Feller property follow from the Skorokhod continuity of $X^\circ$ from Lemma \ref{zeroenter}.

\section{Entrance from $+\infty$, spectrally positive, $\alpha\in(1,2)$}\label{proof5}
The entire proof is along the lines of the previous section, albeit that we work with the duality relation explored in Proposition \ref{nag2}, replacing $\hat{\mathbb{P}}^\circ_0$ by $\hat{\mathbb{P}}^\uparrow_0$, in order to show the sufficiency and necessity of the condition 
\begin{align}\label{SPLPintegral}
	I^{\sigma,\alpha}(\R_+)=\int_0^\infty\sigma(x)^{-\alpha}x^{\alpha-1}{dx}<\infty.
\end{align}
The proof for entrance from $-\infty$ in the spectrally negative regime is analogous.

\subsection*{Proof of  Necessity} Suppose that  $+\infty$ is an entrance point. Then the duality of $\hat{Z}^\uparrow$ and $Z^\dagger$ in Proposition \ref{nag2} means that, by time reversing $Z$ from its first hitting of the origin, $+\infty$ must be accessible for $\hat{Z}^\uparrow$. Reasoning in a similar way to the `necessity' part of the proof in Section \ref{proof3}, we must  have that 
\begin{align}
	\int_0^\infty \sigma({{\hat{X}^\uparrow}}_s)^{-\alpha}ds<\infty
	\label{prove this2}
\end{align}
almost surely under $\hat{\p}^\uparrow_{x}$, for $x\geq 0$. Recalling that ${\hat{X}^\uparrow}$ is a positive self-similar Markov process we use Lamperti's representation to rewrite \eqref{prove this2} as a perpetual integral of the L\'evy process $\hat{\xi}^\uparrow$ discussed at the end of Section \ref{pssmpexamples}. The L\'evy process hits point (because $X$ and hence $X^\uparrow$ do), thus, has local times. The L\'evy process also has finite positive mean as can be seen similarly to the proof in Section \ref{proof3} using the characteristic exponent \eqref{xiuparrowpsi}. Hence, Theorem 1 of \cite{DK} is applicable to deduce via change of variables that \eqref{prove this2} holds if and only if \eqref{SPLPintegral} holds. 
\medskip

\subsection*{Proof of Sufficiency} We are again guided by the sufficiency argument in Section \ref{proof3}. We appeal to the representation in Proposition \ref{BZuparrow} to provide a candidate for ${\rm P}_{+\infty}$ built from $1/\hat{X}^\uparrow_{\theta_t}$, $t\geq 0$, under $\hat{\mathbb{P}}^\uparrow_0$ from Lemma \ref{zeroenter2}. For this to work we need to ensure that $\int_0^\infty\beta(\hat{X}^\uparrow_u){du}<\infty$, $\hat{\mathbb{P}}^\uparrow_0$-almost surely. As in Section \ref{proof3} this will be achieved by proving
\begin{align*}
\hat{\mathbb{E}}^\uparrow_0\left[ \int_0^\infty\beta(\hat{X}^\uparrow_u){du}\right] <\infty.
\end{align*}
To this end, let us write 
$G_{\hat{X}^{\ddagger}}(x, dy)$, $x,y>0$, for the potential measure of $\hat X$ killed on entering $(-\infty,0)$. Appealing to Corollary 8.8 and Exercise 8.2 of \cite{Kbook},  it is shown that, up to a multiplicative constant, 
\begin{equation}
G_{\hat{X}^\ddagger}(x,{dy}) =\left( x^{\alpha - 1} - (x-y)^{\alpha-1}\mathbf{1}_{(x\geq y)}\right)d y, \qquad  x,y\geq 0.
\label{Gdd}
\end{equation}
 Thus, we have that, up to a multiplicative constant on the left-hand side,
\begin{align*}
\hat{\mathbb{E}}^\uparrow_x\left[ \int_0^\infty\beta(\hat{X}^\uparrow_u){du}\right] 
&=\int_{0}^\infty G_{\hat{X}^\ddagger}(x,{dy})\frac{h(y)}{h(x)}\sigma(1/y)^{-\alpha}y^{-2\alpha}\\
&=\int_{0}^\infty \sigma(1/y)^{-\alpha}y^{-\alpha-1}dy
-\int_0^x \frac{(x-y)^{\alpha-1}}{x^{\alpha-1}} \sigma(1/y)^{-\alpha}y^{-\alpha-1}dy
\end{align*}
so that thanks to Fubini's Theorem and  Fatou's Lemma
\begin{align*}
\hat{\mathbb{E}}^\uparrow_0\left[ \int_0^\infty\beta(\hat{X}^\uparrow_u){du}\right]&\leq  \int_0^\infty\lim_{x\downarrow0} \hat{\mathbb{E}}^\uparrow_x\left[\beta(\hat{X}^\uparrow_u)\right] {du}\\
&
 = \int_{0}^\infty \sigma(1/y)^{-\alpha}y^{-\alpha-1}dy \\
 &= \int_{0}^\infty \sigma(z)^{-\alpha}z^{\alpha-1}dz\\
 &<\infty
\end{align*}
as required.

\smallskip
Now we come to the construction. We write down explicitly the process that plays the role of the SDE \eqref{2} started from infinity. Since we proved that $\int_0^\infty\beta(\hat{X}^\uparrow_u){du}$ is almost surely finite under $\hat{\mathbb{P}}^\uparrow_0$, the time-change $\theta_t$, $t\geq 0$, in Propostion \ref{BZuparrow} explodes in finite time. Moreover, on account of the fact that $(\hat{X}^\uparrow,\hat{\mathbb{P}}_0)$ is well-defined, cf. Lemma \ref{zeroenter2}, the space-time transformation
\begin{align}\label{RBZ5}
	{Z}^\dagger_t=\frac{1}{\hat{X}^\uparrow_{\theta_t}}, \qquad t< \int_0^\infty \beta(\hat{X}^\uparrow_u){\,d u},
\end{align}
where 
\[
\theta_t =\inf\left\{s> 0 : \int_0^s\beta(\hat{X}^\uparrow_u){\,d u}>t\right\}
\]
is well-defined under $\hat{\mathbb{P}}^\uparrow_0$. Volkonskii's Theorem, Corollary to Theorem 2 of \cite{Vol}, ensures that 
the right-hand side of \eqref{RBZ5} is a strong Markov process.  Given the conclusion of Proposition \ref{BZuparrow} (i), it thus follows that we have constructed 
a candidate for the Feller extension of $({\rm P}_z, z\in \R)$
 with ${\rm P}_{+\infty}$ defined by \eqref{RBZ5} under  $\hat{\mathbb P}^\uparrow_0$. Note that trajectories come down from $+\infty$ continuously as trajectories under $\hat{\mathbb P}^\uparrow_0$ leave zero continuously and are non-negative.\smallskip

Checking for the Feller property of $Z^\dagger$ when entering at $+\infty$  we again follow the reasoning in Section \ref{proof3} and appeal to the representation in Proposition \ref{BZuparrow} to conclude that it suffices to check that for continuous and bounded $f$ on $[0,\infty]$
\[
\lim_{|x|\to \infty}{\rm E}_x[f(Z^\dagger_t)] = \hat{\mathbb{E}}^\uparrow_0[f(1/\hat{X}^\uparrow_{\theta_t})]\qquad \text{ and }\qquad \lim_{t\to 0}{\rm E}_x[f(Z^\dagger_t)]=\lim_{t\to 0} \hat{\mathbb{E}}^\uparrow_0[f(1/\hat{X}^\uparrow_{\theta_t})] = f(+\infty).
\]
As in Section \ref{proof3}, this follows as a consequence of the Feller property of $\hat{X}^\uparrow$ at $0$, Lemma \ref{zeroenter2}. 
The Skorokhod continuity of $({\rm P}_x , x\in\overline{\R})$ also follows in an easy and  similar manner to the proof at the very end of Section \ref{proof3}.

\section{Entrance from $\pm\infty$, $\alpha=1$}\label{a=1}
Now we come to the more delicate case of $\alpha=1$. The sufficiency proof is similar to the ones before, the  proof of necessity  must be different. There are two reasons why additional arguments are needed. Since the Cauchy process does not hit points (has no local times) the time-reversal from points does not work unchanged and the 0-1 law for perpetual integrals of \cite{DK} is not applicable. To circumvent these difficulties we develop a different approach here, built upon a general theory of transience for Markov processes highlighted by Getoor \cite{G}. The general result for transient Markov processes we will use is developed in the Appendix to avoid distraction from the job at hand in this section.


%
%
%
%
%
%
%
%
%
%

\subsection*{Proof of necessity} We start with an auxiliary lemma, which will be used as part of the proof of necessity thereafter.
We need to compute the potential measure of the extension killed upon first entry to $(-1,1)$. This is a consequence of recent work on killed stable processes given in the lemma below, which is stated under the additional assumptions of the necessary part of the proof of entrance from $\pm\infty$ with $\alpha = 1$. 
\begin{lemma}\label{resolvent}
	Suppose that  $Z^\star$ the unique solution to the SDE \eqref{2} (resp. the extension to infinity) killed upon first entry into $(-1,1)$. Then the potential measure is
	\begin{align*}
		G_{Z^\star}(x,dy)=\begin{cases}
			\sigma(y)^{-1}\frac{1}{\pi}\big(\log(|y|+(y^2-1)^{1/2})\big)\,dy& \text{if }x=\pm\infty\\
			\sigma(y)^{-1}\frac{1}{\pi}\big(\log(|\frac{1-xy}{x-y}|+((\frac{1-xy}{x-y})^2-1)^{1/2}\big)\,dy& \text{if }x\in\R\backslash (-1,1),
		\end{cases}
	\end{align*}
	for $|y|\geq 1$.
\end{lemma}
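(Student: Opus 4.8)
The plan is to reduce the statement to the potential of the driving Cauchy process killed on entering $(-1,1)$, and then to evaluate that potential by spatial inversion onto the classical interval problem.

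First I would strip off the coefficient $\sigma$ using the time-change representation \eqref{timechangesolution}. Writing $X^\star$ for the $\alpha=1$ stable (Cauchy) process killed on first entry into $(-1,1)$, the same manipulation as in \eqref{nextblock} (now with exponent $\sigma^{-\alpha}=\sigma^{-1}$, and with the killing set $(-1,1)$ in place of $\{0\}$) gives, for $|y|\ge 1$,
\[
G_{Z^\star}(x,dy)=\sigma(y)^{-1}G_{X^\star}(x,dy).
\]
Thus everything reduces to computing the potential density $g_{X^\star}(x,y)$ of the Cauchy process killed on entering $(-1,1)$.

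Next I would evaluate $g_{X^\star}$ by inversion. Since $\alpha=1$ the process is the symmetric Cauchy process, $0$ is polar so $X^\dagger=X$, and the function $h$ in \eqref{h} is constant; consequently the Riesz--Bogdan--\.Zak transform of Theorem \ref{RBZthrm} degenerates to the (trivial-$h$) statement that $t\mapsto 1/X_{\eta_t}$, with $\eta_t=\inf\{s>0:\int_0^s|X_u|^{-2}du>t\}$, is again a Cauchy process, carrying $\mathbb{P}_{1/x}$ to $\mathbb{P}_x$. Under this inversion $\{|1/X_{\eta_t}|\ge 1\}=\{X_{\eta_t}\in(-1,1)\}$, so killing $1/X_{\eta_\cdot}$ on \emph{entering} $(-1,1)$ is the same as killing $X$ on \emph{exiting} $(-1,1)$. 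Substituting $u=\eta_t$ (so $dt=|X_u|^{-2}du$) in the occupation integral and then changing variable $y=1/w$, the Jacobians from the time change and from $w\mapsto 1/w$ cancel exactly, leaving
\[
g_{X^\star}(x,y)=G_{(-1,1)}(1/x,1/y),\qquad |x|,|y|>1,
\]
where $G_{(-1,1)}$ is the Green function of the Cauchy process killed on exiting $(-1,1)$.

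Finally I would insert the classical Blumenthal--Getoor--Ray Green function of the symmetric stable process on a ball, specialised to dimension one and $\alpha=1$, namely
\[
G_{(-1,1)}(a,b)=\frac1\pi\log\!\left(\frac{\sqrt{(1-a^2)(1-b^2)}+|1-ab|}{|a-b|}\right),
\]
which fixes the constant $1/\pi$. Setting $a=1/x,\ b=1/y$ and using the identity $(1-xy)^2-(x-y)^2=(x^2-1)(y^2-1)$ collapses the argument to the M\"obius form, giving
\[
g_{X^\star}(x,y)=\frac1\pi\log\!\left(\Big|\tfrac{1-xy}{x-y}\Big|+\Big(\big(\tfrac{1-xy}{x-y}\big)^2-1\Big)^{1/2}\right),
\]
which is the stated expression for $x\in\R\setminus(-1,1)$. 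The case $x=\pm\infty$ then follows by letting $|x|\to\infty$: exactly as in \eqref{hilf}--\eqref{hilfb}, under the standing assumption that $\pm\infty$ is an entrance point the extension is weakly continuous, so $G_{Z^\star}(\pm\infty,A)=\lim_{|x|\to\infty}G_{Z^\star}(x,A)$, and since $\tfrac{1-xy}{x-y}\to-y$ this limit is $\tfrac1\pi\log(|y|+\sqrt{y^2-1})$. The step I expect to be the main obstacle is the honest justification of the inversion at $\alpha=1$: there the conditioned law $\mathbb{P}^\circ$ appearing in Theorem \ref{RBZthrm} is not defined, so I must argue the Kelvin transform $x\mapsto 1/x$ with time change $\eta$ maps the Cauchy process to a Cauchy process directly (the $h\equiv\mathrm{const}$ specialisation), keeping careful track of the two Jacobians so that no spurious power of $|y|$ survives, and verifying the boundary bookkeeping at $|y|=1$ and the normalising constant.
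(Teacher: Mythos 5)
Your proposal is correct, and its overall skeleton coincides with the paper's: strip off $\sigma^{-1}$ via the time-change as in \eqref{nextblock}, identify the potential density of the Cauchy process killed on entering $(-1,1)$, and obtain the $x=\pm\infty$ case as the limit $|x|\to\infty$ using the Skorokhod continuity of the extension as in \eqref{hg} and \eqref{check1}. The one place you diverge is the middle step: the paper simply cites the known exterior potential density (Theorem B of Profeta--Simon, or Theorem II.3.3 of Kyprianou's review), whereas you rederive it by Kelvin inversion from the Blumenthal--Getoor--Ray Green function of the interval. Your derivation checks out: the identity $(1-ab)^2-(a-b)^2=(1-a^2)(1-b^2)$ turns the BGR integral $\int_0^z s^{-1/2}(1+s)^{-1/2}ds=2\log(\sqrt z+\sqrt{1+z})$ with $\kappa_{1,1}=1/(2\pi)$ into exactly the stated M\"obius form, and the two Jacobians (the factor $|X_{\eta_t}|^2$ from $d\eta_t/dt$ and the factor $|y|^{-2}$ from the change of variable $b=1/y$) cancel, so no spurious power of $|y|$ appears. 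The obstacle you flag is not serious: at $\alpha=1$ the $h$-function \eqref{h} is constant, so the Riesz--Bogdan--\.Zak transform degenerates to the classical self-inversion of the Cauchy process under $x\mapsto 1/x$ with the time change $\eta$ -- a fact the paper itself invokes in Section \ref{a=1} ("$h=1$ so that $\hat X^\circ = X$") -- and the boundary set $\{-1,1\}$ is polar for the Cauchy process, so first entry into $(-1,1)$ versus $[-1,1]$ (and likewise exit for the inverted process) agree almost surely. In short, what your route buys is a self-contained proof of the cited formula at the cost of one extra inversion computation; what the paper's route buys is brevity by outsourcing that computation to the literature, where it is proved by essentially the argument you give.
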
	
\begin{proof}
	The formula for $x\in \R\backslash (-1,1)$ follows from Theorem B of Profeta and Simon \cite{PS} or Theorem II.3.3. of Kyprianou \cite{KALEA} (the potential density of the killed Cauchy process), the factor $\sigma^{-1}$ comes from the time-change and substitution in the time-integral defining the potential measure.\smallskip
	
	For $x=\pm\infty$, we can use the assumed Skorokhod continuity as in \eqref{hg} and reason as in \eqref{check1} with $x\notin A$ to deduce that 
	$G_{Z^\star}(\pm\infty,A) =\lim_{|x|\to\infty}G_{Z^\star}(x,A)$, for all bounded Borel sets in $\overline{\underline{\mathbb{R}}}\backslash(-1,1)$. In turn this gives the statement of the lemma.
\end{proof}
To finish the proof of necessity of Theorem \ref{main} in the case $\alpha = 1$, recall that $Z^\star$ is the unique solution to the SDE \eqref{2} killed upon first entry into $(-1,1)$. We first check the assumptions of Proposition \ref{P1} for $Z^\star$. If $x\in \R\backslash (-1,1)$, then $Z^\star$  hits $(-1,1)$ in finite time because of the time-change representation from Proposition \ref{pr}, the (set)recurrence of the Cauchy process and the assumption that $\sigma>0$ is continuous (thus, locally bounded away from zero by a constant). Hence, ${\rm P}_x(\zeta^\star<\infty)=1$ for $x\in \R\backslash (-1,1)$, where $\zeta^\star$ is the lifetime of $Z^\star$. For $x=\pm\infty$ we apply  \eqref{hg}, which is equally valid for $\alpha = 1$, to deduce that $\rm P_{\pm\infty}(\zeta^\star<\infty)=1$, by set recurrence. Since, by definition, $\texttt{E}_x[f(\zeta^\star)]=\texttt{E}_x[f(T^{(-1,1)})]$, where $T^{(-1,1)}= \inf\{t>0: Z_t\in(-1,1)\}$, the continuity of $x\mapsto \texttt{E}_x[f(\zeta^\star)]$ for $f$ bounded continuous follows from the assumed weak continuity in the Skorokhod topology of the extension of $Z$ and Chapter 13 of \cite{Whitt}. Applying Proposition \ref{P1} in the appendix, we obtain $G_{Z^\star}(\pm\infty,K)<\infty$ for all $K$ compact. Choosing $K=\overline{\underline{
\R}}\backslash (-1,1)$, we have from Lemma \ref{resolvent}
\begin{align*}
	\int_{(-1,1)^c}\sigma(y)^{-1}\frac{1}{\pi}\big(\log(|y|+(y^2-1)^{1/2})\big)\,dy<\infty
\end{align*}
from which the integral test 
\begin{align*}
	 I^{\sigma,1}= \int_\R \sigma(y)^{-1}\log |y|d y<\infty
\end{align*}
follows because $\sigma$ is bounded away from $0$ on compacts.

\subsection*{Proof of sufficiency} 
We want to prove that the condition 
\begin{equation}
	I^{\sigma,1}=\int_\R \sigma(x)^{-1}\log |x| dx<\infty
\label{NS}
\end{equation}
implies that $\pm\infty$ is an entrance point. The construction is identical to the one in Section \ref{proof3} but simpler as the $h$-function for $\alpha=1$ becomes $h=1$ so that $\hat{X}^\circ = X$. Specifically we relate via Proposition \ref{prop} the entrance of $Z$ at $\pm\infty$ to the entrance of the Cauchy process at $0$. Note that the Cauchy process leaves zero continuously and never returns. In analogy to the final paragraphs of Section \ref{proof3} the guess for the limiting law will be
\begin{align}\label{ddd}
	{Z}_t=\frac{1}{X_{\theta_t}}, \quad t\geq 0,
\end{align}
under $\P_0$, where 
\[
\theta_t =\inf\left\{s> 0 : \int_0^s\beta(X_u){\,d u}>t\right\}.
\]
To show that $\theta$ is well-defined for all $t\geq 0$ we proved in Section \ref{proof3} that $\int_0^t\beta(\hat{X}^\circ_u){\,d u}<\infty$ almost surely by checking $\hat{\mathbb{E}}^\circ_0\left[ \int_0^\infty\beta(\hat{X}^\circ_u){du}\right] <\infty$ in \eqref{abc}. Controlling the integral up to $t$ by the integral up to $\infty$ is too coarse here as the latter is infinite due to the (set)recurrence of the Cauchy process. What we do instead is to show that $\int_0^{\tau^{(-a,a)^c}}\beta(X_u){\,d u}<\infty$ almost surely for all $a>0$. Since $\lim_{a\to\infty}\tau^{(-a,a)^c}=\infty$ almost surely, as a consequence we obtain $\int_0^t\beta(X_u){\,d u}<\infty$ almost surely.\smallskip

As in Section \ref{proof3}, to verify $\int_0^{\tau^{(-a,a)^c}}\beta(X_u){\,d u}<\infty$ almost surely, we prove finiteness of the expectation under $\mathbb{P}_0$. To this end, considering only $a=1$ for notational convenience and write  $(X^\bullet_t  , t<\tau^{(-1,1)^c})$ for the process $X$ killed on exiting $(-1,1)$. Recalling that $G_{X^\bullet}$  denotes  its potential measure, we compute
\begin{align*}
\mathbb{E}_0\left[ 
	\int_0^{\tau^{(-1,1)^c}} \beta(X_u)\, {d u}\right]&=\int_{-1}^1\beta(y)G_{X^\bullet}(0,dy)\\
	&=\frac{1}{\Gamma(\alpha/2)^2}\int_{-1}^1\beta(y)\int_1^{1/|y|} (s^2-1)^{-1/2} ds \,dy\\
	&\leq -\frac{1}{\Gamma(\alpha/2)^2}\int_{-1}^1\sigma(1/y)^{-1}|y|^{-2}\log|y| dy\\
	&=\int_{|z|\geq 1}\sigma(z)^{-1}\log|z| dz\\
	&\leq I^{\sigma,1}<\infty,
\end{align*}
where we have taken advantage of the explicit form of $G_{X^\bullet}$; see for example Blumenthal et al. \cite{BGR}.\smallskip

The rest of the sufficiency proof goes along the arguments of Section \ref{proof3} with the guessed limit \eqref{ddd} under $\P_0$. Using the above to see that the time-change in \eqref{ddd} is well-defined the argument is as in Section \ref{proof3}.

\section{Explosion}\label{sec6}
We only give the arguments for two-sided jumps, the one sided cases are modifications just as Section \ref{proof5} is a modification of Section \ref{proof3}, e.g. by replacing $X^\circ$ by $X^\uparrow$.
	

\subsection*{Non-explosion for $\alpha\geq 1$} Recall from Proposition \ref{pr} that for initial condition $x\in\R$, under the stable law $\P_x$, the time-change $Z_t:=X_{\tau_t}$ is the unique solution to the SDE \eqref{2} up to the killing time $T= \int_0^\infty \sigma(X_s)^{-\alpha}ds$ which is a perpetual integral. To show that solutions do not explode we only need to verify that $\P_x(T=\infty)=1$. But this is a direct consequence of the (set)recurrence of stable processes for $\alpha\geq 1$.

\subsection*{Explosion and non-explosion for $\alpha\in(0,1)$} Just as in the argument for $\alpha\geq 1$, a 0-1 law $\P_x(T<\infty)\in \{0,1\}$ for the perpetual integral $T=\int_0^\infty \sigma(X_s)^{-\alpha}ds$ depending on $\alpha$ and $\sigma$ would be sufficient to style the remainder of the  proof. The 0-1 law for perpetual integrals is not hard to prove (see Lemma 5 of \cite{DK}) but we cannot provide a direct characterization of $\alpha$ and $\sigma$ that leads to respective probabilities of  $0$ or $1$. Instead, we appeal again to our understanding of how expectation of the perpetual integral serves as an equivalent marker of almost sure convergence. In the `sufficient' direction, this is straightforward in the `necessary' direction, we will again use our variant of  Getoor's characterisation of transience, given in Proposition \ref{P1} of the Appendix. 
	
\subsection*{Necessity} The main idea here will be to use a mixture of space inversion together with time reversal to convert the 
event of explosion into an event of entrance for a familiar transient process that lives on $\R$ (Proposition \ref{nagAx} below). As such, the latter will allow us to invoke Proposition \ref{P1}, whose conclusion can be reinterpreted as ensuring the desired integral test holds.

\smallskip

Recall from Section \ref{background} that, when $\alpha\in(0,1)$, the stable process does not hit points (hence, $X=X^\dagger$) and its Doob $h$-transform using $h$ from \eqref{h} corresponds to conditioning the process to be continuously absorbed (in finite time) at the origin. For the next proposition recall that $\beta (x) = \sigma(1/x)^{-\alpha}|x|^{-2\alpha}$ for $x\neq 0$.

 \begin{prop}\label{nagAx} 
 Suppose that $\alpha\in(0,1)$, the stable process $X$ has two-sided jumps and the solution $Z$ to \eqref{2} explodes for all points of issue.
Under $\mathbb{P}_x, x\in\R$, define $V_t = X_{\iota_t}$ for $t<\int_0^\infty \beta(X_s)ds$, where
\begin{align}\label{iota}
	\iota_t = \inf\left\{s>0 : \int_0^s \beta(X_s)ds > t\right\}
\end{align}
and let $\hat{V}^\circ_ t ={Z^{-1}_t}$ for $t<T:=\int_0^\infty \sigma(X_s)^{-\alpha}ds$. Then 
\begin{equation}
\label{third} V\text{ is in weak duality with  }\hat{V}^\circ \text{ with respect to }\mu(dx)=\beta(x)h(x)dx,
\end{equation}
where $h$ is given by \eqref{h}.
Moreover, when  $Z$ is issued from the origin,
the time-reversal $(\hat{V}^\circ_{(T-t)-}, t\leq T)$  is a time-homogenous Markov process with transition probabilities which agree with that of $V$ started in $0$.

\end{prop}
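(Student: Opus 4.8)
The plan is to follow the template of the proof of Proposition \ref{nag}, adapting each step to the transient regime $\alpha\in(0,1)$, in which $X=X^\dagger$ (the origin is polar, so there is no killing) but $X$ is transient, and replacing the role of ``hitting $0$'' by ``explosion of $Z$''. First I would establish the weak duality \eqref{third} in the same three steps as in Proposition \ref{nag}. Step~1 is Hunt's classical duality (Theorem II.1.5 of \cite{bertoin}), which here reads $p_X(t,y,dz)\,dy=p_{\hat X}(t,z,dy)\,dz$ with $\hat X=-X$, no killing being needed since $X=X^\dagger$. Step~2 combines this with the $h$-transform \eqref{updownCOM} defining the dual conditioned process $\hat X^\circ$ (the dual stable process conditioned to be continuously absorbed at the origin), yielding, verbatim as in \eqref{verifiessufficetocheck}, that $X$ and $\hat X^\circ$ are in weak duality with respect to $m(dx)=h(x)\,dx$, with $h$ from \eqref{h}. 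Step~3 is Walsh's time-change theorem (Theorem 4.5 of \cite{W}): both $V$ and $\hat V^\circ$ are time-changes by the inverse of the \emph{same} additive functional $A_t(\omega)=\int_0^t\beta(\omega_s)\,ds$ --- for $V$ this is its definition, and for $\hat V^\circ=Z^{-1}$ it follows from Proposition \ref{prop}(i), which gives $1/Z^\dagger_t=\hat X^\circ_{\theta_t}$ with $\theta$ the inverse of $\int_0^\cdot\beta(\hat X^\circ_u)\,du$. The Revuz-measure computation is then identical to \eqref{walsh} with $\sigma^{-\alpha}$ replaced by $\beta$, producing the duality measure $\mu(dx)=\beta(x)h(x)\,dx$.

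For the time-reversal statement I would apply Nagasawa's duality theorem (Theorem \ref{Ndual}) with $Y=\hat V^\circ$, dual process $\hat Y=V$, starting distribution $\nu=\delta_{\pm\infty}$ (since $Z_0=0$ forces $\hat V^\circ_0=\pm\infty$), and $L$-time $\texttt{k}=T$, which is finite by the standing explosion hypothesis and is the killing time of $\hat V^\circ$ at the cemetery $0$, hence an $L$-time. The only condition requiring work is (A.3.1), namely $\mu(A)=\int\nu(dx)\,G_{\hat V^\circ}(x,A)=G_{\hat V^\circ}(\pm\infty,A)$. Unlike in Proposition \ref{nag}, I would \emph{not} pass through an entrance law of $\hat X^\circ$ at $0$ (the origin is absorbing, not an entrance point, when $\alpha\in(0,1)$); instead I reduce directly to the transient potential of $X$ issued from $0$. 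Using $\hat V^\circ=Z^{-1}$ with $Z=X_\tau$ under $\mathbb P_0$ and the change of variables $dt=\sigma(X_s)^{-\alpha}\,ds$ gives
\begin{align*}
G_{\hat V^\circ}(\pm\infty,A)=\mathbb E_0\left[\int_0^\infty \mathbf 1_A(X_s^{-1})\,\sigma(X_s)^{-\alpha}\,ds\right]=\int_{\mathbb R}\mathbf 1_A(1/y)\,\sigma(y)^{-\alpha}\,g_X(0,y)\,dy,
\end{align*}
where $g_X(0,\cdot)$ is the (finite, by transience) potential density of $X$ from $0$. Substituting $y=1/z$ and invoking the explicit form of $g_X(0,\cdot)$ for the two-sided transient stable process, the integrand becomes $\sigma(1/z)^{-\alpha}|z|^{-2\alpha}h(z)=\beta(z)h(z)$, so that $G_{\hat V^\circ}(\pm\infty,dz)=\mu(dz)$; the algebraic identity needed, $g_X(0,1/z)=|z|^{2-2\alpha}h(z)$ up to the $\rho\leftrightarrow\hat\rho$ interchange, is verified exactly as the identity $|z|^{2(\alpha-1)}h(1/z)=h(z)$ used in \eqref{check2}. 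Condition (A.3.3) is immediate from the c\`adl\`ag property, and, as at the end of the proof of Proposition \ref{nag}, I would extend $\mu$ to $\underline{\overline{\R}}$ by giving zero mass to $\pm\infty$, so that Nagasawa's theorem applies on $\underline{\overline{\R}}\setminus\{0\}$; its conclusion is precisely that $(\hat V^\circ_{(T-t)-},\,t\le T)$ under $\mathbb P_0$ is a time-homogeneous Markov process with the transition semigroup of $V$ started at $0$.

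The step I expect to be the main obstacle is the potential computation in the previous paragraph: pinning down the correct explicit potential density $g_X(0,\cdot)$ of the \emph{asymmetric} two-sided stable process in the transient regime $\alpha\in(0,1)$ (the appropriate Riesz-type kernel, see e.g. \cite{KALEA}), and checking that after the inversion $y=1/z$ it matches $\beta(z)h(z)$ with the correct sign-dependent constants. A secondary, more structural point to handle with care is the legitimacy of issuing $\hat V^\circ$ from the boundary point $\pm\infty$ and of time-reversing from $T$; both rest on the explosion hypothesis guaranteeing $T<\infty$ almost surely (so that $\hat V^\circ$ genuinely reaches its cemetery $0$ in finite time) together with the one-point-compactification structure of $\underline{\overline{\R}}$. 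The weak duality \eqref{third} itself, by contrast, needs no explosion assumption and holds at the level of the (sub-Markovian) transition kernels.
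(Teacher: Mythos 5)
Your proposal follows essentially the same route as the paper's proof: the weak duality is obtained from Hunt duality of $X$ and $\hat X^\circ$ with respect to $h(x)\,dx$ combined with Walsh's Revuz-measure theorem for the common additive functional $\int_0^\cdot\beta(\omega_s)\,ds$, and condition (A.3.1) of Nagasawa's theorem is verified by exactly the computation you describe, namely ${\rm E}_0\bigl[\int_0^T f(1/Z_t)\,dt\bigr]=\int_\R f(1/x)\sigma(x)^{-\alpha}h(x)\,dx=\int_\R f(y)\beta(y)h(y)\,dy$, using $G_X(0,dx)=h(x)\,dx$ and the inversion identity $h(1/y)|y|^{-2}=h(y)|y|^{-2\alpha}$. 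The step you flag as the main obstacle is dispatched in the paper in one line by citing the known Riesz potential density of the transient stable process, so there is no gap.
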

\begin{proof}
The proof is similar in spirit to that of Proposition \ref{nag} so we only highlight the main points.

\smallskip

Proposition \ref{prop} tells us that $\hat{V}^\circ_ t=Z^{-1}_t  = {{\hat{X}^\circ}}_{\theta_t}$, $t<  \int_0^\infty \beta({{\hat{X}^\circ}}_s)ds$, where the time-change $(\theta_t, t\geq 0)$ is given by 
\begin{align}\label{tauhatAx}
	\theta_t = \inf\left\{s>0 : \int_0^s \beta({{\hat{X}^\circ}}_s)ds > t\right\}.
\end{align}
Since $Z$ is assumed to explode at the finite $T$, $\hat{X}^\circ_{\theta_\cdot}$ is absorbed at $T$.

\smallskip

The proof of the weak duality \eqref{third} follows by the use of Revuz measures, as in the proof of  \eqref{claim}, as soon as we can show that $\hat{X}^\circ$ and $X$ are in weak duality  with respect to $ h(x)dx$. This was already shown, however, in \eqref{verifiessufficetocheck}.

\smallskip

For the final part, we note that $\hat{V}^\circ= {{\hat{X}^\circ}}_{\theta_\cdot}$ is a Markov process that hits the origin at the explosion time $T$ of $Z$. As before, we want to apply Nagasawa's Duality Theorem \ref{Ndual}. As usual, the verification of {\bf (A.3.3)} is 
straightforward (appealing to dominated convergence). Taking account of \eqref{third}, to verify {\bf (A.3.1)}, we are required to check that, for all bounded and measurable $f$ which is compactly supported in the domain $\overline{\underline\R}\backslash\{0\}$ of $\hat{V}^\circ$,
\begin{align}
{\rm E}_0\left[\int_0^T f(1/Z_t)dt\right]=\int_\R f(x)\beta(x)h(x)dx.
\label{showthisformu}
\end{align}
Writing $G_X$ for the potential measure of $X$, we have $G_X(0,dx) =  h(x)dx$; see e.g.  Theorem I.1.4 in  Kyprianou \cite{KALEA}. We may thus write 
\begin{align}
 {\rm E}_0\left[\int_0^T f(1/Z_t)dt\right]&=
\mathbb{E}_0\left[
\int_0^\infty f(1/X_s) \sigma(X_t)^{-\alpha}dt\right] \notag\\
&= \int_\R  f(1/x) \sigma(x)^{-\alpha}h(x) dx\notag\\
& = \int_\R  f(y) \sigma(1/y)^{-\alpha} h(1/y) y^{-2}dy\notag\\
&=\int_\R  f(y) \sigma(1/y)^{-\alpha} h(y)|y|^{-2\alpha}dy\notag\\
&=\int_\R  f(y) \beta(y)h(y)dy,
\label{compactfimportant}
\end{align}
where, just as in \eqref{check2}, we have used that $h(1/y)|y|^{-2} = h(y)|y|^{-2\alpha}$. Note in particular that the compact support in $\overline{\underline\R}\backslash\{0\}$ of $f$ ensures that the right-hand side of \eqref{compactfimportant} is finite. The requirement \eqref{showthisformu} thus holds and hence the proof is complete. 
\end{proof}

Let us now return to the proof of necessity for the case $\alpha\in(0,1)$ in Theorem \ref{zthrm} for which we aim to use Proposition \ref{P1}. Recall the notion $(X^\bullet_t  , t<\tau^{(-1,1)^c})$ for the stable process $X$ killed on first exiting $(-1,1)$. Accordingly
$
X^\bullet_{\iota_\cdot}
$ denotes the process $V = X_{\iota_\cdot}$ killed on first exiting $(-1,1)$. Let us denote the killing time by $\zeta^\bullet$ and note that $\zeta^\bullet= 
\int_0^{\tau^{(-1,1)^c}}\beta(X_s)ds$. When $X^\bullet_{\iota_\cdot}$ is issued from a point $x\neq 0$, the aforementioned integral representation of $\zeta^\bullet$ and  the fact that $|X^\bullet_{\iota_\cdot}|$ is almost surely bounded away from the origin and 1 implies that  $\zeta^\bullet$  is almost surely finite. For $x=0$ the almost sure finiteness of $\zeta^\bullet$ is a consequence of the assumed explosion of $Z$ and the time-reversal statement in Proposition \ref{nagAx}. In total, the assumed explosion implies $\P_x(\zeta^\bullet\in (0,\infty))=1$ for all $x\in (-1,1)$, which is property (a) of Proposition \ref{P1}. 

\smallskip

{\color{black}
Property (b) of Proposition \ref{P1}, requires the weak continuity of $\zeta^\bullet$. 
Weak continuity is clear when the point of issue is away from the origin, as the trajectory of $X$ is bounded away from the origin; recall that the integrand of $ 
\int_0^{\tau^{(-1,1)^c}}\beta(X_s)ds$ (which equals $\zeta^\bullet$)  is explosive if $|X_s|\to0$. Weak continuity of $\zeta^\bullet$ at zero is a more complicated issue but, fundamentally, is a consequence of the assumed Skorokhod continuity of the explosion time $T$. 
\smallskip

To see why, we use duality, $h$-transforms and dominated convergence. First, note that the converse to the duality and spatial inversion in Proposition \ref{nagAx} (analogously to Propositions \ref{nag} and \ref{nag2}) is that, if we take the process $V= X_{\iota}$ issued from $x\in (-1,1), x\neq 0,$ and time reverse it from its last passage out of $(-1,1)$, say $\ell^{(-1,1)}$, the resulting process is equal in law to the process $\hat{V}^{\circ,(x)}$, defined as $1/Z^{\circ,(1/x)}$,  where $Z^{\circ,(1/x)}$ is the Doob $h$-transform of $X$ with the $h$-function $y\mapsto h(y-1/x)$ on $\mathbb{R}$, where $h$ is given by  \eqref{h} (i.e. $X$ conditioned to hit $1/x$ continuously), and time changed in the same way as \eqref{timechangesolution}. The initial condition of $Z^{\circ,(1/x)}$ is $\varpi_x(dy): =\mathbb{P}_x(1/X_{\ell^{(-1,1)}-}\in dy)$, $y\in(-1,1)^c$. Reasoning similarly to that of Step 1 of the proof of Proposition \ref{nag}  shows that  $X$ and $\hat{X}^\circ$ are in weak duality  and  we can also identify 
$
\mathbb{E}_x[f(X_{\ell^{(-1,1)}-}) ]=\lim_{|y|\to\infty}
\hat{\mathbb{E}}_y[f({X}_{\tau^{(-1,1)}})\hat{h}(X_{\tau^{(-1,1)}}-x)/\hat{h}(y-x) ]
$; see similar calculations in \cite{KV}.
It follows from the explicit formula \eqref{striplowalpha} that $\varpi_x$ is absolutely continuous with respect to the Lebesgue measure, for each $x\in (-1,1)$, as well as that $(\varpi_x, x\in(-1,1))$ forming a weakly continuous family of measures. We will use these preparatory remarks to prove $$\lim_{|x|\to0}\mathbb{P}_x(t<\zeta^\bullet)=\mathbb{P}_{0}(t<\zeta^\bullet), \quad t\geq 0.$$ Define 
 \[
 H_x(y, t):=  \mathbb{E}_{y}\left[\mathbf{1}_{(t<T)} {|xX_{\tau_t}-1|^{\alpha -1} }{|xy-1|^{1-\alpha}}\right],\quad y\in (-1,1)^c, x\in(-1,1), t>0,
 \]  
 so that, due to the duality and spatial inversion mentioned above, $$\mathbb{P}_x(t<\zeta^\bullet) = \int_{(-1,1)^c} H_x(y,t)\,\varpi_x(dy).$$ In order to deal with the limit of $\mathbb{P}_x(t<\zeta^\bullet)$ for $|x|\to 0$, we first prove that
\begin{equation}
	\lim_{|x|\to 0}\int_{(-1,1)^c} H_x(y,t)\,\varpi_x(dy)= \lim_{|x|\to0} \int_{(-1,1)^c} H_0(y,t)\, \varpi_x(dy),
\label{varpi1}
\end{equation}
and then use weak continuity of the measures $(\varpi_x, x\in(-1,1))$ and continuity of $H_0$ to complete the argument. Note that the Doob $h$-transform in the definition of $H_x(y,t)$ is applied at the almost surely finite stopping times $(\tau_t, t\geq 0)$ which remains a martingale transform e.g. by Theorem III.3.4 of \cite{JacodShiryaev}.\smallskip

Let us start to prove \eqref{varpi1}. As an $h$-transform, $H_x(y, t)$ is a probability and hence bounded in $[0,1]$.
To verify \eqref{varpi1} we show $\lim_{|x|\to0}\sup_{ |y|\in[1, N]}|H_x(y, t)- H_0(y, t)| = 0$ for any $N>1$ which then allows us to replace $H_x$ by $H_0$ in \eqref{varpi1}. To this end, using the spatial homogeneity of $(X, \mathbb{P})$, we can choose $\delta>0$ sufficiently small such that, for given $\varepsilon>0$,
\begin{align}
&\quad \sup_{ |y|\in[1, N]}|H_x(y,t)- H_0(y,t)| \notag\\
&= \sup_{ |y|\in[1, N]}\left|\mathbb{E}_{y}\left[\mathbf{1}_{(t<T)} \frac{|xX_{\tau_t}-1|^{\alpha -1} }{|xy-1|^{\alpha-1}}\right]-  \mathbb{P}_y(t<T)\right|\notag\\
&\leq  \mathbb{E}_{0}\left[\sup_{ |y|\in[1, N]}\mathbf{1}_{(t<T^{(y)}, \, \inf_{s\geq 0}|y+X_{s}|>\delta)}\left| \frac{|x + (xX_{\tau^y_t}/y)-(1/y)|^{\alpha -1} }{|x-(1/y)|^{\alpha-1}}-1\right|\right]\notag\\
&\quad +\mathbb{E}_{0}\left[\sup_{ |y|\in[1, N]}\mathbf{1}_{(t<T^{(y)}, \, \inf_{s\geq 0}|y+X_{s}|\leq \delta)}\left| \frac{|x + (xX_{\tau^y_t}/y)-(1/y)|^{\alpha -1} }{|x-(1/y)|^{\alpha-1}}-1\right|\right],
\label{delta}
\end{align}
where $T^{(y)}= \int_0^\infty \sigma(y+X_u)^{-\alpha}du$ and  $\tau^y_t = \inf\{s>0: \int_0^s \sigma(y+X_u)^{-\alpha}du> t\}$.  Note that the continuity of $\sigma$ and the restriction of $y\in[1,N]$ ensures that $\underline{c}_Nt \leq \tau^y_t\leq \overline{c}_Nt $ for constants $\underline{c}_N, \overline{c}_N$, depending on $N$. Next, we note that, for each fixed $u>0$, Doob's martingale inequality and the fact that $X$ is known to have  absolute moments of all orders in $(-1,\alpha),$ ensures that, for $p>1$ sufficiently close to 1, fixed $u>0$ and $z\in\mathbb{R}$, $ \mathbb{E}_0[\sup_{s\leq u}|z+X_s|^{p(\alpha-1)}]\leq c_p \mathbb{E}_0[|z+X_u|^{p(\alpha-1)}]<\infty$, for some unimportant constant $c_p\in(0,\infty)$.  As a consequence, when $x\in[-1/(2N),1/(2N)]$ and $|y|\in[1,N]$, there are constants $b_1^N$ and $b_2^N$ such that 
\begin{align*}
\mathbf{1}_{(t<T^{(y)})}\left| \frac{|x + (xX_{\tau^y_t}/y)-(1/y)|^{\alpha -1} }{|x-(1/y)|^{\alpha-1}}-1\right|
&\leq  b_1^N
\textstyle{
\sup_{s\leq b_2^N t}|X_s|^{\alpha -1} 
}
+ 1.
\end{align*}
For the first summand on the right-hand side of \eqref{delta}, we may now appeal to dominated convergence and take limits as $|x|\to0$ inside the expectation, noting that  the term between the modulus signs in the previous display tends to zero. The second summand of the right-hand side of \eqref{delta} vanishes for $\delta\to 0$ directly with dominated convergence. The desired  $\lim_{|x|\to0}\sup_{ |y|\in[1, N]}|H_x(y,t )- H_0(y,t )|  = 0$ now follows.\smallskip

To both verify and identify the limit in \eqref{varpi1}, we now note that the just-proved uniform continuity of $H_x(y,t ) $ implies that, for a given choice of $\varepsilon$, by taking $N$ sufficiently large such that $\varpi_0([-N,N]^c)<\varepsilon$,
\begin{align}
&\limsup_{|x|\to0}\left|\int_{(-1,1)^c}H_x(y,t )\, \varpi_x(dy)  -\int_{(-1,1)^c} H_0(y,t)\, \varpi_x(dy)\right|\notag \\
&<\limsup_{|x|\to0}\varepsilon\varpi_x([-N,N]) + 2\varpi_x([-N,N]^c)\notag\\
&\leq \varepsilon + 2\varpi_0([-N,N]^c) <3\varepsilon.
\label{almostthere}
\end{align}
Hence, \eqref{varpi1} is proved. To compute the righthand side of \eqref{varpi1} we need continuity of $y\mapsto H_0(y,t)=\P_y(T>t)$ for all $t\geq 0$ fixed, which is a consequence of the weak convergence assumption if the explosion time $T$ has no atoms. A variant of Proposition \ref{nagAx} states that the SDE started from $y$ and reversed from explosion is equal in law to the stable process $X$ issued at the origin and conditioned to hit $1/y$ via an $h$-transform using \eqref{h}, with the time change $\iota$ in \eqref{iota}. It follows that, for $y\neq 0$, 
\begin{equation}
\P_y(T>t)= \mathbb{E}_0[\mathbf{1}_{(\iota_t<\infty)} |y X_{\iota_t} -1|^{\alpha-1}].
\label{noatoms}
\end{equation} 
Dominated convergence (recall $X$ has   absolute moments in $(-1,\alpha)$) together with quasi-left/right-continuity of $X$ and the fact that  $( \iota_t, t\geq 0)$ is a  continuous additive functional  ensures that $\P_y(T>t)$ has no discontinuities for any $y\neq 0$, $t>0$.  Hence, from  \eqref{varpi1}, the continuity of $H_0$ and the weak continuity of $(\varpi_x, x\in(-1,1))$, we have  $$\lim_{|x|\to0}\mathbb{P}_x(t<\zeta^\bullet) =\lim_{|x|\to0}\int_{(-1,1)^c}H_0(y,t )\,\varpi_x(dy)  = \mathbb{P}_{\varpi_0}(t<T) =  \mathbb{P}_{0}(t<\zeta^\bullet),\quad t\geq 0,$$ where the final equality follows from the duality of $V$ and $1/Z$ from Proposition \ref{nagAx}. Portmanteau's Theorem now ensures that  we have the desired weak convergence in property (b) of Proposition \ref{P1}.
}
\smallskip

Both  conditions of Proposition \ref{P1} are thus met and hence, we may deduce  as a conclusion of that proposition that, for all $0<\varepsilon<1$, 
\begin{align}
\infty>\mathbb{E}_{0}\left[\int_0^\infty \mathbf{1}_{(|X^\bullet_{\iota_s}|\leq \varepsilon)} ds\right]  &=
\mathbb{E}_{0}\left[\int_0^{\tau^{(-1,1)^c}} \mathbf{1}_{(|X_{u}|\leq \varepsilon)} \beta(X_u)du\right]
=\int_{[-\varepsilon, \varepsilon]} \beta(x)G_{X^\bullet}(0,dx).
\label{mustbefinite}
\end{align}
From Theorem II.2.3 in \cite{KALEA}, equivalently Theorem B of \cite{PS}, it is known that, for $\alpha\in(0,1)$, $G_{X^\bullet}(0,dx)$ has a density which is asymptotically equivalent to $h$ times a constant at $0$.
%
From \eqref{mustbefinite} we thus have that 
\begin{equation}
\int_{[-\varepsilon, \varepsilon]} \beta(x)h(x)dx<\infty.
\label{mustbefinite2}
\end{equation}
Changing variables as in \eqref{compactfimportant} gives the desired integral test $I^{\sigma,\alpha}(\R)= \int_\R\sigma(y)^{-\alpha} |y|^{\alpha-1}dy<\infty.$


\subsection*{Sufficiency} 
First note that
 \begin{align}\label{potential}
	 \mathbb{E}_x\left[\int_{0}^\infty \sigma(X_t)^{-\alpha}dt\right] = \int_\mathbb{R} \sigma(y)^{-\alpha}G_X(x,dy)=\int_\mathbb{R} \sigma(y)^{-\alpha}h(x-y)dy,
 \end{align}
  where, as before,  $G_X$ is the potential measure of $X$ and $h$ is the the free potential density of $X$ given in \eqref{h}. The righthand side is finite for all $x\in \R$ if and only if
  \begin{align}\label{SDEexists}
	I^{\sigma,\alpha}(\R)= \int_\R\sigma(y)^{-\alpha} |y|^{\alpha-1}dy<\infty.
 \end{align}
Hence, if the assumed integral test holds, then the perpetual integral $T=\int_{0}^\infty \sigma(X_t)^{-\alpha}dt$ has finite expectation, thus, is finite $\P_x$-almost surely. Proposition \ref{pr} implies that for all initial conditions the unique solution to the SDE \eqref{2} almost surely explodes in finite time. {\color{black} As soon as we know that $T$ is almost surely finite, identity \eqref{noatoms} ensures there is Feller explosion.}

\appendix

\section{A transience result for Markov processes}
	We  develop a version of a result of Getoor \cite{G} on transience of Markov processes. Our version imposes stronger regularity assumptions than the main results in Getoor but gives a stronger statement also. We adopt here the same notation as in Section \ref{timereversesubsec}. Let us suppose $Y = (Y_t, t\leq \zeta)$ is a Markov process with state space $E$, cemetery state $\partial$, killing time $\zeta=\inf\{t>0: Y_t = \partial\}$ and transition (sub)probabilities $({\rm\texttt{P}}_x, x\in E)$. Denote by $\mathcal{P}: = (\mathcal{P}_t, t\geq 0)$ the associated transition semigroup and by $U_Y[f]=\int_0^\infty \mathcal{P}_t[f]\,\d t$ the potential operator. 
 The potential measure induced by $U_Y$ is denoted by $G_Y(\cdot,\cdot)$, as in the sections above. As usual, every function on $E$ is set to $0$ at the cemetery state $\partial$. 
 We prove the following proposition:
 

\begin{prop}\label{P1}Suppose $Y$ is Markov on $E$ with killing time $\zeta$ so that
	\begin{itemize}
		\item[(a)] ${\emph{\texttt{P}}}_x(\zeta\in (0,\infty))=1$ for all $x\in E$,
		\item[(b)] $x\mapsto \emph{\texttt{E}}_x[f(\zeta)]$ is continuous for all continuous and bounded functions on $[0,\infty)$, i.e. the killing time is weakly continuous in the initial condition.
	\end{itemize}
	Then $G_Y(y,K)<\infty$ for all $y\in E$ and $K\subseteq E$ compact.
\end{prop}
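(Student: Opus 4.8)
The plan is to realise $G_Y(y,K)=\texttt{E}_y\!\left[\int_0^\zeta \mathbf{1}_K(Y_t)\,dt\right]$ as the expected occupation time of $K$ before killing, and to bound it by a renewal argument over successive returns to $K$. The geometric decay of the number of returns will be driven by a \emph{uniform} control on the lifetime over the compact set $K$, which is precisely what hypotheses (a) and (b) are designed to supply.

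The first and decisive step is to show that there exists $t_0>0$ with $\sup_{z\in K}\texttt{P}_z(\zeta\ge t_0)\le \tfrac12$. For fixed $T$, the map $F_T(z):=\texttt{P}_z(\zeta\ge T)$ is upper semicontinuous, being the infimum over $\varepsilon>0$ of the functions $z\mapsto \texttt{E}_z[g_{T,\varepsilon}(\zeta)]$, which are continuous by (b), where $g_{T,\varepsilon}\in C_b([0,\infty))$ decrease to $\mathbf{1}_{[T,\infty)}$ as $\varepsilon\downarrow 0$. By (a) we have $F_T(z)\downarrow 0$ pointwise as $T\to\infty$, and $F_T$ is decreasing in $T$. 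A Dini-type argument on the compact $K$ then gives $\sup_{z\in K}F_T(z)\to 0$: if $\sup_K F_{T_k}\ge\delta$ along some $T_k\to\infty$, pass to $z_k\to z_\ast\in K$, use monotonicity ($F_{T_k}\le F_T$ for $T_k\ge T$) and upper semicontinuity to deduce $F_T(z_\ast)\ge\delta$ for every $T$, contradicting $F_T(z_\ast)\to 0$. Choosing $T=t_0$ with $\sup_K F_{t_0}\le\tfrac12$ yields the claim.

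Next I would set up the renewal. Put $\tau_0=T_K=\inf\{t\ge 0:Y_t\in K\}$ and inductively $\tau_{n+1}=\inf\{t>\tau_n+t_0:Y_t\in K\}$; these are stopping times, and since $K$ is closed, $Y_{\tau_n}\in K$ on $\{\tau_n<\infty\}$. As $\{\tau_{n+1}<\infty\}\subseteq\{\zeta-\tau_n\ge t_0\}$, the strong Markov property at $\tau_n$ combined with the uniform bound gives $\texttt{P}_z(\tau_{n+1}<\infty\mid\mathcal{F}_{\tau_n})\le \texttt{P}_{Y_{\tau_n}}(\zeta\ge t_0)\le\tfrac12$ on $\{\tau_n<\infty\}$, whence $\texttt{P}_z(\tau_n<\infty)\le 2^{-n}$. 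On each block $[\tau_n,\tau_{n+1})$ the path lies in $K$ only during $[\tau_n,\tau_n+t_0]$, by the very definition of $\tau_{n+1}$, so the $K$-occupation there is at most $t_0$; and no time is spent in $K$ before $\tau_0$. Summing, $\int_0^\zeta \mathbf{1}_K(Y_t)\,dt\le t_0\sum_{n\ge 0}\mathbf{1}_{\{\tau_n<\infty\}}$, and taking $\texttt{E}_y$ gives $G_Y(y,K)\le t_0\sum_{n\ge 0}2^{-n}=2t_0<\infty$ for every $y\in E$.

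The main obstacle is the uniform tail bound, i.e.\ the Dini step, which is the only place (b) enters essentially. Weak continuity of $\zeta$ yields continuity of $z\mapsto\texttt{E}_z[f(\zeta)]$ only for $f\in C_b$, so one must work with the upper semicontinuity of the genuine tail $\texttt{P}_z(\zeta\ge T)$; one cannot shortcut through the Laplace transforms $\texttt{E}_z[e^{-q\zeta}]$, since a uniform bound on those controls survival from below (the wrong direction). The remaining points, namely the measurability of the $\tau_n$ and the fact that $Y_{\tau_n}\in K$, are routine and rest on the standard regularity of $Y$ as a Hunt (right) Markov process together with the closedness of $K$.
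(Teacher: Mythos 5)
Your proof is correct, but it takes a genuinely different route from the paper's. The paper (following Getoor) first establishes the identity $U_Y[h_a](x)=\texttt{E}_x[\zeta\wedge a]$ for $h_a(x)=\texttt{P}_x(\zeta\le a)$ (Lemma \ref{L1}), then for each $x\in K$ chooses $a_x$ with $h_{a_x}(x)>0$, sandwiches $\mathbf 1_{[0,a_x]}\le f_x\le \mathbf 1_{[0,a_x+1]}$ by a continuous function so that, by (b), $g_x=\texttt{E}_\cdot[f_x(\zeta)]$ is continuous, dominates $\epsilon_x\mathbf 1_{O_x}$ on a neighbourhood $O_x$ of $x$, and satisfies $U_Y[g_x]\le a_x+1$ everywhere; a finite subcover of $K$ then gives the claim. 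You instead extract from (a) and (b) a \emph{uniform} tail bound $\sup_{z\in K}\texttt{P}_z(\zeta\ge t_0)\le \tfrac12$, via upper semicontinuity of $z\mapsto\texttt{P}_z(\zeta\ge T)$ (an infimum of functions that are continuous by (b)) together with a one-sided Dini argument on the compact $K$, and then run a renewal scheme over successive returns to $K$ separated by $t_0$, the strong Markov property yielding geometric decay $\texttt{P}_z(\tau_n<\infty)\le 2^{-n}$ and hence the occupation bound. Both arguments are sound and both use (a) and (b) in an essential way; yours delivers the cleaner, fully explicit and $y$-uniform bound $G_Y(y,K)\le 2t_0$, at the price of invoking the strong Markov property at the debut times $\tau_n$ and the fact that $Y_{\tau_n}\in K$ on $\{\tau_n<\infty\}$ (right-continuity of paths plus closedness of $K$), whereas the paper's potential-theoretic argument needs only the semigroup identity $\mathcal{P}_s\mathcal{P}_a=\mathcal{P}_{s+a}$ and continuity of $g_x$, so it is marginally more robust with respect to the regularity assumed of $Y$.
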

The proof is based on a simple lemma motivated by Lemma 3.1 of Getoor \cite{G}. The lemma identifies simple functions on $E$ for which the potential operator can be computed explicitly.
\begin{lem}\label{L1}
	Define $h_a(x)={\emph{\texttt{P}}}_x(\zeta\leq a)$ for all $x\in E$. Then we have $U_Y[ h_a](x)= \emph{\texttt{E}}_x[\zeta\wedge a]$.
\end{lem}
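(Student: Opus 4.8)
The plan is to unfold the potential operator $U_Y$ into its probabilistic representation and then read off the lifetime via the Markov property. Recalling that $U_Y[f](x)=\texttt{E}_x\big[\int_0^\infty f(Y_t)\,\d t\big]$ under the standing convention that every function vanishes at the cemetery state $\partial$, and that $h_a\ge 0$, Tonelli's theorem lets me write
\[
U_Y[h_a](x)=\int_0^\infty \texttt{E}_x\big[h_a(Y_t)\big]\,\d t .
\]
Since $h_a(Y_t)=h_a(\partial)=0$ on $\{t\ge\zeta\}$, only the event $\{t<\zeta\}$ contributes to the integrand.

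The key step is to evaluate $\texttt{E}_x[h_a(Y_t)]$. On $\{t<\zeta\}$ I would apply the Markov property at time $t$ together with the identity $\zeta=t+\zeta\circ\theta_t$ (the remaining lifetime after time $t$ equals $\zeta-t$), which gives $h_a(Y_t)=\texttt{P}_{Y_t}(\zeta\le a)=\texttt{E}_x[\mathbf 1_{(\zeta\le t+a)}\mid\mathcal F_t]$ there. Taking expectations and using that $\{t<\zeta\}\in\mathcal F_t$ yields
\[
\texttt{E}_x\big[h_a(Y_t)\big]=\texttt{E}_x\big[h_a(Y_t)\,;\,t<\zeta\big]=\texttt{P}_x\big(t<\zeta\le t+a\big).
\]

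Finally I would substitute this into the time integral and interchange the order of integration once more (again by nonnegativity and Tonelli):
\[
U_Y[h_a](x)=\int_0^\infty \texttt{P}_x\big(t<\zeta\le t+a\big)\,\d t=\texttt{E}_x\left[\int_0^\infty \mathbf 1_{((\zeta-a)\vee 0\,\le\, t\,<\,\zeta)}\,\d t\right]=\texttt{E}_x[\zeta\wedge a],
\]
where the last equality uses that the Lebesgue measure of the interval $[(\zeta-a)\vee 0,\zeta)$ equals $\zeta\wedge a$.

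The computation is essentially routine; the only point that requires genuine care is the Markov-property identity $\zeta\circ\theta_t=\zeta-t$ on $\{t<\zeta\}$, which is precisely what converts the integrand $h_a(Y_t)$ into the two-sided tail $\texttt{P}_x(t<\zeta\le t+a)$ of the lifetime. Everything else is bookkeeping: the vanishing of $h_a$ at $\partial$ restricts the integrals to $\{t<\zeta\}$, and the two applications of Tonelli are justified because all integrands are nonnegative.
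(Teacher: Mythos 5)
Your proof is correct, and it reaches the key intermediate identity $\mathcal{P}_t[h_a](x)=\texttt{P}_x(t<\zeta\le t+a)$ by a probabilistic route (the Markov property at time $t$ together with $\zeta\circ\theta_t=\zeta-t$ on $\{t<\zeta\}$), whereas the paper obtains the same quantity purely at the semigroup level: it writes $h_a=\mathbf{1}_E-\mathcal{P}_a\mathbf{1}_E$, uses $\mathcal{P}_s\mathcal{P}_a=\mathcal{P}_{s+a}$, and telescopes $\int_0^t\mathcal{P}_s[\mathbf{1}_E](x)\,\d s-\int_a^{t+a}\mathcal{P}_s[\mathbf{1}_E](x)\,\d s$, killing the boundary term $\int_t^{t+a}\texttt{P}_x(\zeta>s)\,\d s$ as $t\to\infty$ via assumption (a). The two computations are really the same fact in different clothing, but the endgames differ: the paper needs (a) explicitly to dispose of the tail term, while you need it implicitly in your last step, since $\int_0^\infty\mathbf{1}_{(t<\zeta\le t+a)}\,\d t$ equals $\zeta\wedge a$ only on $\{\zeta<\infty\}$ (on $\{\zeta=\infty\}$ the indicator is identically zero, not supported on an interval of length $a$), so you should cite $\texttt{P}_x(\zeta<\infty)=1$ there. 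Your version has the advantage of avoiding the limit in $t$ and making transparent where the lifetime enters; the paper's version stays entirely within the operator calculus and so avoids invoking shift operators and conditional expectations. Both are complete proofs.
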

\begin{proof}
Using that $\mathcal{P}_a \mathbf{1}_E(x)=\texttt{P}_x(X_a\in E)=\texttt{P}_x(\zeta>a)=1- h_a(x)=\mathbf{1}_E-h_a(x)$ for $x\in E$ we find, for $x\in E$,
\begin{align*}
	U_Y[  h_a](x)
	&= \lim_{t\to\infty}\int_0^t \mathcal{P}_s [ h_a](x)\,\d s\\
	&= \lim_{t\to\infty} \int_0^t  \big(\mathcal{P}_s [\mathbf{1}_E](x)-\mathcal{P}_s[ \mathcal{P}_a [\mathbf{1}_E] ](x)\big)\,\d s\\
	&= \lim_{t\to\infty} \Big(\int_0^t  \mathcal{P}_s [\mathbf{1}_E](x)\,\d s -\int_0^t  P_{s+a} [\mathbf{1}_E ](x)\,\d s\Big)\\
	&= \lim_{t\to\infty} \Big(\int_0^t  \mathcal{P}_s [\mathbf{1}_E](x)\,\d s -\int_a^{t+a} \mathcal{P}_s [\mathbf{1}_E] (x)\,\d s\Big)\\
	&=\int_0^a \mathcal{P}_s [\mathbf{1}_E] (x)\,\d s-\lim_{t\to\infty}\int_t^{t+a} \texttt{P}_x(\zeta>s)\,\d s.
\end{align*}
By continuity of measures and Assumption (a) we have $\lim_{s\to\infty} \texttt{P}_x(\zeta>s)=\texttt{P}_x(\zeta=\infty)=0$, hence, 
\begin{align*}
	U_Y[h_a](x)= \int_0^a \texttt{P}_x(\zeta > s)\,\d s=\texttt{E}_x \left[\int_0^a \mathbf{1}_{(s<\zeta) }\,\d s\right]=\texttt{E}_x[\zeta \wedge a], 
\end{align*}
which completes the proof. \end{proof}

We can now give the proof of the proposition.

\begin{proof}[Proof of Proposition \ref{P1}]
 Fix $x\in E$. From (a) there is a constant $a_x>0$ so that $h_{a_x}(x)>0$. Now fix a continuous bounded $f_x$ on $E$ such that
\begin{align*}
	\mathbf 1_{(z\leq a_x)}\leq f_x(z)\leq \mathbf 1_{(z\leq a_x+1)},\quad \text{for all }z\in E,
\end{align*}
so that (monotonicity), for all $y\in E$,
\begin{align*}
	h_{a_x}(y)=\texttt{E}_y[\mathbf 1_{(\zeta \leq a_x)}]\leq \texttt{E}_y[f_x(\zeta)]\leq \texttt{E}_y[\mathbf 1_{(\zeta \leq a_x+1)}]=h_{a_x+1}(y).
\end{align*}
Now we have from the choice of $a_x$ and the lemma (monotonicity of potential operator for the second)
\begin{itemize}
	\item $g_x(x)>0$ with $g_x(y)=\texttt{E}_y[f_x(\zeta)]$,
	\item $U_Y g_x(y)\leq U_Y h_{a_x+1}(y)<a_x+1$ for all $y\in E$.
\end{itemize}
Using the continuity of $g_x$ we proved that for all $x$ there is an open neighborhood $O_x$ of $x$ on which $g_x$ is strictly positive and dominates an indicator $\epsilon_x \mathbf 1_{O_x}$ for some $\epsilon_x>0$. This implies $G_Y(y,O_x)<\eps_x^{-1}(a_x+1)$ for all $y\in E$. As 	$\cup_{x\in K} O_x$
is a covering for the compact set $K\subset E$,
there is a finite subcovering $K\subset \cup_{i=1}^n O_{x_i}$. Hence, 
\begin{align*}
	G_Y (y,K)\leq \sum_{i=1}^n G_Y(y,O_i)<\sum_{i=1}^n\eps^{-1}_{x_i}(a_{x_i}+1)<\infty,
\end{align*}
for all $y\in E$.
\end{proof}

\section*{Acknowledgements} Both authors would like to thank Jean Bertoin for introducing them to the problem. Part of this work was carried out when the AEK was on sabbatical at ETH Z\"urich and he would like thank the Forschungsinstitut f\"ur Mathematik for its hospitality. Both authors are especially grateful to an anonymous referee who read an initial version of this manuscript with great care, offering very helpful comments that led to its improvement. We would also like to thank Cyril Labb\'e for discussions. 

\bibliography{biblio}{}
\bibliographystyle{plain}

\begin{table}[h]
\begin{tabular}{l l l}
&\multicolumn{1}{c}{\bf Glossary of some commonly used notation}&\\
&&\\
\hline
Notation & Description & Location \\
\hline
&&\\
$(X, \mathbb{P}_x)$, $\Psi$ & stable process and exponent &  \eqref{Psi_alpha_rho_parameterization_process} \\
$\hat{\square}$ & any quantity $\square$ built pathwise or in law from $-X$ as opposed to $X$ &various\\
$\tau^D$ & first entry time into $D$ of $X$ & various\\
$X^\dagger$ & stable process killed on hitting 0&\eqref{Xdagger}\\
$(X^\circ, \mathbb{P}_x^\circ)$& stable process conditioned to absorb continuously at/avoid the origin&\eqref{attract}/\eqref{avoid}\\
$(X^\uparrow, \mathbb{P}^\uparrow_x) $& stable process conditioned to stay positive and exponent&\eqref{CTSP}\\
$h$ & Doob $h$-{\color{black}function} in definiton of $X^\circ$ and $X^\uparrow$ & \eqref{h} and \eqref{speconesidedh}\\
$\hat{X}^{\circ\hspace{-1pt}>}$&negatively censored $\hat{X}^\circ$&\eqref{Ycen}\\
$\hat{X}^\ddagger$& $\hat{X}$ killed on entering $(-\infty,0)$&above \eqref{Gdd}\\
$X^\bullet$ &  $X$ killed on exiting $(-1,1)$   for $\alpha \leq  1$&Sections \ref{a=1} and \ref{sec6} \\
 & & \\
\hline
& & \\
$(\hat{V}^\circ, \hat{\mathbb{P}}_x)$ &time changed process $X_{\theta_\cdot}$& Proposition \ref{nagAx}\\
$(Z, {\rm P}_x)$ & time-changed process $X_{\tau_\cdot}$ (weak solution of SDE)& {\color{black}Proposition \ref{pr}}\\
$Z^\dagger$ & $Z$ killed on hitting the origin& various\\
$\hat{Z}^\circ$ &Nagasawa dual to $Z^\dagger$& Proposition \ref{nag}\\
$Z^\star$ & Z killed on entering $(-1,1)$  for $\alpha =1$ & Lemma \ref{resolvent} \\
$\hat{Z}^\uparrow$ & Nagasawa dual to $Z^\dagger$ with spectral positivity &Proposition \ref{nag2}\\
$T^{D}$ &first entry time of $Z$ into $D$&\S\ref{proof1}, \S\ref{proof2} \\
 & & \\
\hline
& & \\
$\overline{\R}$, $\underline{\R}$, $\overline{\underline{\R}}$ & extenstion of $\R$ using $+\infty$, $-\infty$ and $\pm\infty$ & \eqref{3R}\\
$(Y, \texttt{P}_y)$ on $S$ &  general Markov process & \eqref{semigroupMarkov}\\
$(\mathcal{P}_t, t\geq 0)$ & semigroup of general Markov process& Definition \ref{Fellersemigroupdef}\\
$\mu,\nu$ & Nagasawa duality measures (used for duals of $Z^\dagger$, $\hat{Z}^\circ$, $\hat{Z}^\uparrow$ and $Z^\updownarrow$)  &various\\
$m,\nu$ & Nagasawa duality measures (used for duals of $X^\dagger$, $\hat{X}^\circ$, $\hat{X}^\uparrow$ and $X^\updownarrow$)  &various\\
$G_Y$ & potential measure of Markov process $Y$  (e.g. for $X^\dagger$, $X^\circ$ etc.)& \eqref{GY}\\
$p_Y(t, x,dy)$ & transition measure of Markov process $Y$ (e.g. for $X^\dagger$, $X^\circ$ etc.) & e.g. \eqref{classicdual}\\
 & & \\
\hline
& & \\
$(\mathcal{X}, {P}_x)$ & (positive) self-similar Markov process & \S\ref{pssMpsect}, \S\ref{sec:rssMp}\\
$(\xi, \mathbf{P})$ & L\'evy process underlying Lamperti transform &\S\ref{pssMpsect}\\
$((\xi, J), \mathbf{P}_{x,i})$ & MAP underlying Lamperti--Kiu transform & \S\ref{sec:rssMp}\\

$(\xi^{>},\mathbf{P}^>$), $\Psi^{>}$  & L\'evy process underlying censored stable process   and exponent&\eqref{censoredpsi}\\
$(\xi^{|\cdot|}, \mathbf{P}^{|\cdot|})$, $\Psi^{|\cdot|}$ & L\'evy process underlying $|X|$ when $\rho = 1/2$ and exponent& \eqref{a}\\
$\xi^\uparrow$, $\Psi^\uparrow$ & L\'evy process underlying  $X^\uparrow$ and exponent& \eqref{Psiuparrow}\\
$\xi^\dagger$, $\Psi^\dagger$  & L\'evy process underlying $X^\dagger$ in spectrally positive case and exponent& \eqref{xipsi}\\
$\hat{\xi}^{\circ\hspace{-1pt}>}$, $\hat{\Psi}^{\circ\hspace{-1pt}>}$& L\'evy process underlying $\hat{X}^{\circ\hspace{-1pt}>}$&below \eqref{Ycen}\\
  & & \\
\hline

\end{tabular}
\label{table-notation}
\end{table}

\end{document}